\newcommand{\supp}{\operatorname{supp}}
\newcommand*\diff{\mathop{}\!\mathrm{d}}
\newcommand{\Id}{\operatorname{Id}}
\newcommand{\rk}{\operatorname{rk}}
\newcommand{\ind}{\operatorname{ind}}
\newcommand{\R}{\mathbb R}
\newcommand{\Z}{\mathbb Z}
\newcommand{\CC}{\mathbb C}
\newcommand{\A}{\mathcal{A}}
\newcommand{\PP}{\mathbb P}
\newcommand{\CCC}{\mathscr{C}}
\newcommand{\T}{\mathbb T}
\newcommand{\kk}{\operatorname{KK}}
\newcommand{\hd}{\Omega^{1/2}}
\newcommand{\G}{\mathcal{G}}
\newcommand{\End}{\operatorname{End}}
\newcommand{\m}{\mathcal{M}}
\newcommand{\op}{\operatorname{Op}}
\newcommand{\g}{\mathfrak{g}}
\newcommand{\Sp}{\operatorname{Sp}}
\newcommand{\vphi}{\varphi}
\newcommand{\rr}{\rightrightarrows}
\newcommand{\inj}{\hookrightarrow}
\newcommand{\act}{\curvearrowright}
\newcommand{\ord}{\operatorname{ord}}
\newcommand{\lag}{\langle}
\newcommand{\rag}{\rangle}
\newcommand{\DNC}{\operatorname{DNC}}
\newcommand{\ev}{\operatorname{ev}}
\newcommand{\ttt}{\mathfrak{t}}
\newcommand{\pr}{\operatorname{pr}}
\newcommand{\ho}{\operatorname{Hol}}
\def\dar[#1]{\ar@<2pt>[#1]\ar@<-2pt>[#1]}
\newcommand{\eq}[1][r]
   {\ar@<-3pt>@{-}[#1]
    \ar@<-1pt>@{}[#1]|<{}="gauche"
    \ar@<+0pt>@{}[#1]|-{}="milieu"
    \ar@<+1pt>@{}[#1]|>{}="droite"
    \ar@/^2pt/@{-}"gauche";"milieu"
    \ar@/_2pt/@{-}"milieu";"droite"}
\theoremstyle{plain}
\newtheorem{thm}{Theorem}[section]
\newtheorem{lem}[thm]{Lemma}
\newtheorem{prop}[thm]{Proposition}
\newtheorem{cor}{Corollary}[thm]
\newtheorem{ex}[thm]{Example}
\theoremstyle{definition}
\newtheorem{mydef}[thm]{Definition}
\theoremstyle{remark}
\newtheorem{rem}[thm]{Remark}
\DeclareSymbolFontAlphabet{\mathbb}{AMSb}
\DeclareSymbolFontAlphabet{\mathbbl}{bbold}
\newcommand\blfootnote[1]{%
  \begingroup
  \renewcommand\thefootnote{}\footnote{#1}%
  \addtocounter{footnote}{-1}%
  \endgroup
}
\title{A transverse index theorem in the calculus of filtered manifolds}
\author{Clément Cren\footnote{Univ Paris Est Creteil, Univ Gustave Eiffel, CNRS, LAMA UMR8050, F-94010 Creteil, France} \footnote{ORCID 0000-0001-9217-4953}\\ \small{ Email: \href{mailto:clement.cren@u-pec.fr}{clement.cren@u-pec.fr} } }
\date{}
\begin{document}

\maketitle
\blfootnote{\textit{MSC Classification: }Primary: 58J40, 53C12; Secondary: 58H05, 46L80, 19K56}
\begin{abstract}
We use filtrations of the tangent bundle of a manifold starting with an integrable subbundle to define transverse symbols to the corresponding foliation, define a condition of transversally Rockland, and prove that transversally Rockland operators yield a K-homology class. We construct an equivariant KK-class for transversally Rockland transverse symbols, and show a Poincare duality type result linking the class of an operator and its symbol. 
\end{abstract}

\section{Introduction}

Let $M$ be a manifold and $H^1 \subset \cdots \subset H^r = TM$ a filtration of its tangent bundle such that:
$$\forall i, j\geq 1, \left[ \Gamma(H^i),\Gamma(H^j) \right] \subset \Gamma(H^{i+j}).$$
This setting, called a Lie filtration on $M$, gives rise to the filtered pseudodifferential calculus (also called Heisenberg calculus in the historical case where $r=2$ see \cite{FollandSteinEstimate, bealsgreiner}). Vector fields in $\Gamma(H^i)$ are now considered as differential operators of order $i$. We call H-pseudodifferential operators the ones arising from this calculus. The associated symbols (of order zero) are then multipliers of the noncommutative algebra $C^*(T_HM)$, where $T_HM$ is the Lie group bundle integrating the Lie algebra bundle:
$$\ttt_HM = H^1 \oplus \faktor{H^2}{H^1} \oplus \cdots \oplus \faktor{TM}{H^{r-1}}.$$
It is endowed with the Lie bracket induced by the one of vector fields. This calculus has been introduced to study operators that are hypoelliptic but not elliptic, hence replacing the notion of ellipticity in the filtered calculus by the Rockland condition.
In this article we add an integrable subbundle $H^0 \subset H^1$ and define a notion of  transverse symbols to the foliation induced by $H^0$. We define a "restriction in the transverse direction" map for the symbols, and a transversal Rockland condition in the framework of filtered calculus. We show the existence of a transverse index for symbols satisfying this condition, and show a Poincare duality type result between the class of an operator and the one of its symbol. In the philosophy of noncommutative geometry this setup corresponds to a filtration on the space of leaves of the foliation. We will thus pay a special attention to the case of foliations given by fibrations.
The symbols and operators will be considered from the viewpoint of convolution operators on the tangent groupoid that are quasi-homogeneous with respect to the natural $\R^*_+$-action. This is the framework developped in \cite{van_erpyunken}, it allows to bypass symbolic estimates and is more suited to groupoids and operator albgeras techniques. The article is structured as follows: the first part is devoted to the geometric framework, the holonomy actions, and the introduction of the different groupoids that will be used in the subsequent parts. The second part details the restriction of symbols to the transverse part in the setting of filtered calculus which allows to define a "transversally Rockland" condition. Finally we give two constructions of the transverse cycle, one from the symbol and using KK-theoretic tools, and the other one more analytic, similar to the one in \cite{hilsumskandalis}, using a pseudodifferential operator instead of its symbol, and we prove the equality between the two KK-classes, yielding a Poincare duality type result.

\section{Geometric preliminaries}

\subsection{The setting and the holonomy action}

Let $M$ be a manifold. A foliated filtration is a filtration of the tangent bundle, $H^0 \subset H^1 \subset \cdots \subset H^r = TM$,  such that:
$$\forall i, j\geq 0, \left[ \Gamma(H^i),\Gamma(H^j) \right] \subset \Gamma(H^{i+j}).$$
This condition directly implies that $H^0$ is integrable and that $H^1 \subset \cdots \subset H^r = TM$ is a Lie filtration on $M$ as described in the introduction. Moreover, the Lie bracket of vector fields then gives maps:
$$\Gamma\left(\faktor{H^i}{H^{i-1}}\right) \times \Gamma\left(\faktor{H^j}{H^{j-1}}\right) \to \Gamma\left(\faktor{H^{i+j}}{H^{i+j-1}}\right), \ i,j\geq 0.$$
This way the vector bundles: 
$$\ttt_HM = H^1 \oplus \faktor{H^2}{H^1} \oplus \cdots \oplus \faktor{TM}{H^{r-1}},$$
and
$$\ttt_{\sfrac{H}{H^0}}M = \faktor{H^1}{H^0} \oplus \faktor{H^2}{H^1} \oplus \cdots \oplus \faktor{TM}{H^{r-1}},$$
are both endowed with a Lie algebroid structure over $M$ (with trivial anchor). In fact, for $i,j \geq 1$, the bracket is $\CCC^{\infty}(M)$-bilinear and hence gives a structure of (nilpotent) Lie algebra bundles to $\ttt_HM$ and $\ttt_{\sfrac{H}{H^0}}M$ (see \cite{van_erp_2017,choi_2019,mohsen2018deformation} for more details on this construction). Actually, for fixed $x \in M$, $H^0_x \subset \ttt_{H,x}M$ is an ideal and the quotient Lie algebra is $\ttt_{\sfrac{H}{H^0},x}M$.

Using Baker-Campbell-Hausdorff formula we integrate these Lie algebra bundles into nilpotent Lie group bundles $T_HM$, and $T_{\sfrac{H}{H^0}}M$. The quotient map $\ttt_HM \to \ttt_{\sfrac{H}{H^0}}M$ gives a quotient map $T_HM \to T_{\sfrac{H}{H^0}}M$.\\
Another groupoid useful to our study is the holonomy groupoid of the foliation given by $H^0$. Let us denote it by $\ho(H^0)$. We want to show that this groupoid acts on $T_{\sfrac{H}{H^0}}M$, preserving its groupoid structure (i.e. for all $\gamma \in \ho(H^0), \gamma \colon T_{\sfrac{H}{H^0},s(\gamma)}M \to T_{\sfrac{H}{H^0},r(\gamma)}M$ is a group homomorphism). In order to do so, we need the following result:

\begin{lem}Let $G \rr M$ be a Lie groupoid, and $Y \subset M$ a submanifold. Let $\rho$ be the anchor map of the associated algebroid $\A G \to M$. If for all $y \in Y,$  $\rho(\A_yG) \subset T_yY$ then $G\cdot Y \subset Y,$ i.e. $\forall \gamma \in G, s(\gamma) \in Y \Leftrightarrow r(\gamma) \in Y$.
\end{lem}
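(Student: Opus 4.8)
The plan is to reduce the statement to the elementary fact that a vector field tangent to $Y$ along $Y$ has a flow preserving $Y$, and then to express the target of an element of a source fibre through such flows. First note that the equivalence $s(\gamma)\in Y \Leftrightarrow r(\gamma)\in Y$ follows from the single implication $s(\gamma)\in Y \Rightarrow r(\gamma)\in Y$ applied to both $\gamma$ and $\gamma^{-1}$, so it suffices to prove that $r(s^{-1}(y)) \subseteq Y$ for every $y\in Y$; here we use, as holds for $\ho(H^0)$ and in all our applications, that $G$ has connected source fibres (otherwise one should work with the source-connected component).

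The key local input is this: for any section $X\in\Gamma(\A G)$, the hypothesis $\rho(\A_yG)\subset T_yY$ says exactly that the vector field $\rho(X)\in\Gamma(TM)$ satisfies $\rho(X)(y)\in T_yY$ for all $y\in Y$; hence $\rho(X)$ restricts to a vector field on $Y$ and, by uniqueness of integral curves, its flow $\phi^{\rho(X)}_t$ sends $Y$ into $Y$ wherever defined. The same holds for any time-dependent vector field of the form $V_t=\sum_i a_i(t)\,\rho(X_i)$ with $X_i\in\Gamma(\A G)$ and $a_i$ smooth in $t$, since each $\rho(X_i)$ is tangent to $Y$ along $Y$.

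Now fix $y\in Y$ and $\gamma_0\in s^{-1}(y)$ and choose a smooth path $\gamma\colon[0,1]\to s^{-1}(y)$ with $\gamma(0)=1_y$ and $\gamma(1)=\gamma_0$, and set $c(t)=r(\gamma(t))$, so $c(0)=y$. Using that $r(\delta\,\gamma(t)^{-1})=r(\delta)$ for $\delta$ near $\gamma(t)$ in $s^{-1}(y)$, i.e. $r\circ R_{\gamma(t)^{-1}}=r$ on a neighbourhood of $\gamma(t)$ in $s^{-1}(y)$, differentiation gives $\dot c(t)=dr_{\gamma(t)}(\dot\gamma(t))=\rho(\xi(t))$, where $\xi(t):=(dR_{\gamma(t)^{-1}})_{\gamma(t)}(\dot\gamma(t))\in \A_{c(t)}G$ under the identification $\A G=\ker(ds)|_M$ with anchor $dr$. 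Over a chart domain $U$ trivialising $\A G$ by a frame of sections $X_1,\dots,X_q$ we may write $\xi(t)=\sum_i a_i(t)X_i(c(t))$ with $a_i$ smooth, so $\dot c(t)=V_t(c(t))$ with $V_t=\sum_i a_i(t)\rho(X_i)$. By the previous paragraph $V_t$ is tangent to $Y$ along $Y$ and $c(0)=y\in Y$, so uniqueness of solutions of the ODE forces $c(t)\in Y$ as long as $c$ stays in $U$. Covering $c([0,1])$ by finitely many such chart domains and subdividing $[0,1]$ accordingly yields $c(t)\in Y$ for all $t$; in particular $r(\gamma_0)=c(1)\in Y$.

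The only real subtlety is the passage, in the last paragraph, from the pointwise hypothesis $\rho(\A_yG)\subset T_yY$ — which a priori constrains $\dot c(t)$ only when $c(t)$ already lies in $Y$, and so does not by itself allow a Gronwall/continuation argument — to an honest time-dependent vector field tangent to $Y$ along $Y$ to which ODE uniqueness applies. This is exactly what the local-frame expansion $V_t=\sum_i a_i(t)\rho(X_i)$ accomplishes, since each $\rho(X_i)$ is tangent to $Y$ along $Y$ regardless of where $c(t)$ lies. Everything else — the right-translation identity $r\circ R_{\gamma(t)^{-1}}=r$, the standard description of $\A G$ and its anchor, and the connectedness/subdivision bookkeeping — is routine.
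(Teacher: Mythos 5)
Your proof is correct and is essentially the paper's argument with the black box opened: the paper's proof is the one-liner that $\rho(\A_yG)$ is the tangent space to the $G$-orbit through $y$, so orbits meeting $Y$ stay in $Y$, and your right-translation/time-dependent-ODE computation is precisely a from-scratch verification of that mechanism. Your explicit caveat about source-connected fibres (and, implicitly, the need for $Y$ to be closed, or at least locally closed, so that the set of times $t$ with $c(t)\in Y$ is closed and the continuation argument runs) identifies hypotheses that the paper's one-line proof silently assumes but which do hold in its application to $Y=\faktor{H}{F}\subset\faktor{TM}{F}$ and $G=\ho(F)\ltimes\faktor{TM}{F}$.
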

\begin{proof}
$\rho(\A_yG)$ is the tangent space to the $G$-orbit passing through $y$, hence the condition implies that an orbit passing through $Y$ stays in $Y$, hence the result holds true.
\end{proof}

\begin{cor}Let $F \subset TM$ be a foliation and $H \subset TM$ be an arbitrary subbundle. If $\left[\Gamma(F),\Gamma(H)\right] \subset \Gamma(H)$ then $\faktor{H}{F}$ (the image of $H$ in $\faktor{TM}{F}$) is invariant under the action $\ho(F) \act \faktor{TM}{F}$.
\end{cor}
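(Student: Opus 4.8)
The plan is to derive the statement from the previous Lemma, applied not to $M$ itself but to the action groupoid of the holonomy action. Assume $F\subseteq H$, which is the only situation we will use (in general one replaces $\faktor{H}{F}$ by the image of $H$ in $\faktor{TM}{F}$, still an embedded subbundle whenever $H+F$ has locally constant rank). Write $N:=\faktor{TM}{F}$, let $p\colon N\to M$ be the projection, and let $G:=\ho(F)\ltimes N \rr N$ be the action groupoid of $\ho(F)\act N$ (a Lie groupoid, possibly non-Hausdorff, since the holonomy action on the normal bundle is smooth). Every arrow of $G$ with source $v\in N$ has the form $(\gamma,v)$ and range $\gamma\cdot v$, so the assertion that $Y:=\faktor{H}{F}\subset N$ is $\ho(F)$-invariant is exactly the conclusion $G\cdot Y\subseteq Y$ of the Lemma. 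It therefore suffices to check the hypothesis of the Lemma: $\rho(\A_{v}G)\subseteq T_{v}Y$ for every $v\in Y$, where $\rho$ is the anchor of $\A G$.

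For that I would first make the anchor explicit. Since $\A\ho(F)=F$ with anchor the inclusion $F\hookrightarrow TM$, the algebroid of the action groupoid is $\A G=p^{*}F$ and $\rho$ is the infinitesimal holonomy action: for $a\in F_{x}$ and $v\in N_{x}$, choosing $X\in\Gamma(F)$ with $X_{x}=a$ and letting $\phi^{t}_{X}$ denote its flow, one has $\rho(a,v)=\frac{\dd}{\dd t}\big|_{t=0}\overline{(d\phi^{t}_{X})_{x}}(v)$, where $\overline{(d\phi^{t}_{X})_{x}}\colon N_{x}\to N_{\phi^{t}_{X}(x)}$ is the map induced on normal spaces — well defined because involutivity $[\Gamma(F),\Gamma(F)]\subseteq\Gamma(F)$ makes $F$ invariant under the flow of $X$.

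The key point is that the hypothesis $[\Gamma(F),\Gamma(H)]\subseteq\Gamma(H)$ upgrades this to flow-invariance of $H$ itself: for every $X\in\Gamma(F)$ one has $(d\phi^{t}_{X})_{x}(H_{x})=H_{\phi^{t}_{X}(x)}$. This is the standard fact that a subbundle is invariant under the flow of a vector field precisely when the Lie derivative along that field preserves its sections; I would prove it by passing to the annihilator $H^{\circ}\subseteq T^{*}M$, on which $\mathcal{L}_{X}$ acts as well (since $(\mathcal{L}_{X}\beta)(Y)=X(\beta(Y))-\beta([X,Y])$ vanishes for $\beta\in\Gamma(H^{\circ})$, $Y\in\Gamma(H)$), and then integrating the linear equation $\frac{\dd}{\dd t}(\phi^{t}_{X})^{*}\beta=(\phi^{t}_{X})^{*}\mathcal{L}_{X}\beta$ in a local frame of $\Gamma(H^{\circ})$, which keeps $(\phi^{t}_{X})^{*}\beta$ in $\Gamma(H^{\circ})$. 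Granting this, $\overline{(d\phi^{t}_{X})_{x}}$ maps $(\faktor{H}{F})_{x}$ onto $(\faktor{H}{F})_{\phi^{t}_{X}(x)}$, so for $v\in(\faktor{H}{F})_{x}$ the whole curve $t\mapsto\overline{(d\phi^{t}_{X})_{x}}(v)$ remains in $Y$; differentiating at $t=0$ gives $\rho(a,v)\in T_{v}Y$, and since $a\in F_{x}$ was arbitrary, $\rho(\A_{v}G)\subseteq T_{v}Y$. The Lemma then yields $G\cdot Y\subseteq Y$, i.e. the invariance of $\faktor{H}{F}$. I expect the only real obstacle to be this flow-invariance fact — the passage from the infinitesimal bracket condition to an honest invariance of the subbundle $H$ — together with the (routine but slightly fiddly) identification of the anchor of $\A G$ with the infinitesimal holonomy action; once these are in hand, everything reduces to a direct application of the Lemma.
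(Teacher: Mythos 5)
Your proposal is correct and follows the same overall strategy as the paper: both apply the preceding Lemma to the action groupoid $G = \ho(F)\ltimes \faktor{TM}{F}$ with $Y = \faktor{H}{F}$, so everything reduces to checking that the anchor image $\rho(\A_{y}G)$ is tangent to $Y$. Where you differ is in how that tangency is verified. The paper stays purely infinitesimal: it describes the anchor image as an extension $0 \to [F,\bar{\xi}] \to \rho(\A_{(x,\bar{\xi})}G) \to F_x \to 0$, where $[F,\bar{\xi}]$ consists of the classes $\overline{[X,Y]}(x)$ with $X \in \Gamma(F)$ and $\bar{Y}(x) = \bar{\xi}$, and observes that the hypothesis $\left[\Gamma(F),\Gamma(H)\right] \subset \Gamma(H)$ forces this subspace into $\faktor{H_x}{F_x}$; comparing with the analogous exact sequence for $TY$ gives the tangency at once. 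You instead first upgrade the bracket hypothesis to the global statement that $H$ is invariant under the flows of sections of $F$ (via the annihilator bundle and the Lie-derivative ODE), deduce that the induced maps on $\faktor{TM}{F}$ preserve $\faktor{H}{F}$, and differentiate the resulting curve inside $Y$ to recover the infinitesimal condition. Both arguments are sound; the paper's is shorter because it never integrates, while yours is more self-contained (you make the Bott-connection description of the anchor explicit and prove, rather than quote, the equivalence between the bracket condition and flow-invariance) and proves slightly more, since flow-invariance of $H$ already exhibits the holonomy invariance of $\faktor{H}{F}$ along flow-generated paths. Your preliminary remark that one should assume $F \subset H$, or else replace $\faktor{H}{F}$ by the image of $H$ in $\faktor{TM}{F}$, addresses a point the paper leaves implicit.
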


\begin{proof}
We apply the lemma to $G = \ho(F)\ltimes \faktor{TM}{F} \rr \faktor{TM}{F}$ and $Y = \faktor{H}{F}$. Let $x \in M$, $\xi \in H_x$. Denote by $\bar{\xi}$ the class of $\xi$ modulo $F_x$. We have the exact sequence:
$$\xymatrix{0 \ar[r] & \faktor{TM}{F} \ar[r] & T\left(\faktor{TM}{F}\right) \ar[r]^-{\diff\pi} & TM \ar[r] & 0},$$
where the first copy of $TM$ corresponds to vertical vector fields for $\pi \colon \faktor{TM}{F} \to M$. When restricted to $Y$, we obtain:
$$\xymatrix{0 \ar[r] & \faktor{H}{F} \ar[r] & TY \ar[r]^{\diff\pi} & TM \ar[r] & 0}.$$
The direct image of $\A_{(x,\bar{\xi})}G$ by the anchor map is then:
$$\xymatrix{0 \ar[r] & [F,\overline{\xi}] \ar[r] & \rho(\A_{(x,\bar{\xi})}G) \ar[r]^-{\diff\pi} & F_x \ar[r] & 0.}$$
Here $[F,\overline{\xi}] = \{ \overline{[X,Y]}(x), X \in \Gamma(F), Y \in \Gamma(TM) \bar{Y}(x) = \bar{\xi} \} \subset \faktor{H_x}{F_x}$ because of the condition $\left[\Gamma(F),\Gamma(H)\right] \subset \Gamma(H)$. We are thus under the conditions of the previous lemma and $\faktor{H}{F}$ is $\ho(F)$-invariant.
\end{proof}

The bracket conditions on foliated Lie filtrations, combined with the corollary, gives that $\ho(H^0)$ acts on each $\faktor{H^i}{H^{0}}$, hence on each $\faktor{H^i}{H^{i-1}}$ ($i\geq 0$). The action is compatible with the brackets $\Gamma(\faktor{H^i}{H^{i-1}}) \times \Gamma(\faktor{H^j}{H^{j-1}}) \to \Gamma(\faktor{H^{i+j}}{H^{i+j-1}})$, and hence gives an action $\ho(H^0) \act  \ttt_{\sfrac{H}{H^0}}M$, compatible with the algebroid structure on $\ttt_{\sfrac{H}{H^0}}M$

\begin{cor}The action $\ho(H^0) \act \ttt_{\sfrac{H}{H^0}}M$ lifts to a groupoid action $\ho(H^0)\act T_{\sfrac{H}{H^0}}M$, which preserves the groupoid structure on $T_{\sfrac{H}{H^0}}M$ .
\end{cor}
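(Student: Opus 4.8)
The plan is to use functoriality of the Baker--Campbell--Hausdorff construction. Recall that $T_{\sfrac{H}{H^0}}M$ is, by construction, the bundle of simply connected nilpotent Lie groups whose fiber over $x$ is the (nilpotent) Lie algebra $\ttt_{\sfrac{H}{H^0},x}M$ endowed with the BCH product; in these exponential coordinates the underlying manifold of $T_{\sfrac{H}{H^0}}M$ is identified with that of $\ttt_{\sfrac{H}{H^0}}M$ and the fiberwise exponential $\exp_x \colon \ttt_{\sfrac{H}{H^0},x}M \to T_{\sfrac{H}{H^0},x}M$ is the identity. The discussion preceding the statement shows that the action of $\ho(H^0)$ on $\ttt_{\sfrac{H}{H^0}}M$ is fiberwise linear, preserves the grading, and is compatible with the brackets; hence for each $\gamma \in \ho(H^0)$ the map $\gamma \colon \ttt_{\sfrac{H}{H^0},s(\gamma)}M \to \ttt_{\sfrac{H}{H^0},r(\gamma)}M$ is an isomorphism of graded nilpotent Lie algebras (the inverse being provided by $\gamma^{-1}$, using that we already have a groupoid action).

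First I would invoke the classical fact that a morphism of nilpotent Lie algebras $\phi \colon \mathfrak{n}_1 \to \mathfrak{n}_2$ integrates to a unique morphism of the associated simply connected Lie groups $\Phi \colon N_1 \to N_2$ with $\Phi \circ \exp_1 = \exp_2 \circ \phi$, and that this assignment is functorial: $\mathrm{id} \mapsto \mathrm{id}$ and $\psi \circ \phi \mapsto \Psi \circ \Phi$. Applying this fiberwise to the isomorphisms above, set $\gamma \cdot g := \exp_{r(\gamma)}\bigl(\gamma \cdot \exp_{s(\gamma)}^{-1}(g)\bigr)$ for $g \in T_{\sfrac{H}{H^0},s(\gamma)}M$. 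By construction this is a group isomorphism $T_{\sfrac{H}{H^0},s(\gamma)}M \to T_{\sfrac{H}{H^0},r(\gamma)}M$, so it preserves the (group bundle) groupoid structure of $T_{\sfrac{H}{H^0}}M$; the identities $(\gamma_1\gamma_2)\cdot g = \gamma_1\cdot(\gamma_2\cdot g)$, $1_x \cdot g = g$, and the compatibility $p(\gamma\cdot g) = r(\gamma)$ with the bundle projection $p$ follow at once from the corresponding identities for the action on $\ttt_{\sfrac{H}{H^0}}M$ together with the functoriality just quoted.

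It remains to check smoothness of the action map $\ho(H^0) \times_{s, M, p} T_{\sfrac{H}{H^0}}M \to T_{\sfrac{H}{H^0}}M$, and this is the only real point of the proof. Using the exponential coordinates above, the underlying fibered manifold of $T_{\sfrac{H}{H^0}}M$ is canonically $\ttt_{\sfrac{H}{H^0}}M$ and the fibered product and the action map are literally the same set maps as for the already-constructed smooth action $\ho(H^0) \act \ttt_{\sfrac{H}{H^0}}M$; hence smoothness is inherited. Equivalently, one may phrase this as: the operation of integrating a smooth nilpotent Lie algebra bundle by BCH is compatible with base change and sends a smooth groupoid action by graded Lie algebra bundle automorphisms to a smooth groupoid action by Lie group bundle automorphisms, precisely because in exponential coordinates it does not alter the underlying fibered manifold. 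This produces the desired action $\ho(H^0) \act T_{\sfrac{H}{H^0}}M$ preserving the groupoid structure.
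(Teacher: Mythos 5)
Your proof is correct. It is worth noting, though, that the paper disposes of this in one line by invoking the general lifting principle: since $T_{\sfrac{H}{H^0}}M$ is the $s$-simply connected groupoid integrating $\ttt_{\sfrac{H}{H^0}}M$, any action of a Lie groupoid on the algebroid (compatible with the algebroid structure) lifts uniquely to a groupoid-preserving action on the integration. Your argument reaches the same conclusion by a more elementary and self-contained route that exploits nilpotency: because the Baker--Campbell--Hausdorff product is a universal Lie polynomial in the brackets, a graded Lie algebra isomorphism of the fibers is \emph{literally} a group isomorphism in exponential coordinates, the exponential being the identity on the underlying manifold; functoriality of this (trivial) integration then gives the groupoid action axioms for free, and smoothness is inherited because the action map is the same set map as the already-smooth action on $\ttt_{\sfrac{H}{H^0}}M$. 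What the paper's appeal to the general theorem buys is uniformity (the same argument works for any integrable algebroid with $s$-simply connected integration, not just bundles of nilpotent Lie algebras with trivial anchor); what your version buys is transparency and independence from the general integration machinery, and it makes explicit why no covering-space or monodromy issues can arise here. Both are complete proofs of the statement.
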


\begin{proof}
Since $T_{\sfrac{H}{H^0}}M$ is the s-simply connected groupoid integrating $\ttt_{\sfrac{H}{H^0}}M$, we can lift the action of $\ho(H^0)$ to a groupoid preserving action.
\end{proof}

Note that $\ho(H^0)$ only acts on the normal bundle $\faktor{TM}{H^0}$ and not on the whole tangent bundle to $M$. Hence it does not act on $\ttt_{H}M$, nor on $T_HM$.

\begin{ex} Let $H^1 \subset \cdots \subset H^r = TM$ be a Lie filtration. Let $G$ be a Lie group acting on $M$ locally freely and preserving each $H^i$. Then, by differentiation, the foliation $\mathfrak{g}\ltimes M$ induced by the action (which is a regular foliation because the action is locally free) also preserves each $H^i$. Hence $\tilde{H}^0 = \mathfrak{g}\ltimes M$ and $\tilde{H}^i = H^i + \mathfrak{g}\ltimes M$ defines a foliated filtration, if we assume the $\tilde{H}^i$ to be subbundles.
\end{ex}

\begin{rem}In general we cannot assume that $\mathfrak{g}\ltimes M \subset H^1$. For instance, the Reeb foliation on a contact manifold has its leaves transverse to the contact distribution and also preserves it. Moreover, in our setting, we have to add the assumptions that each $\tilde{H}^i$ is a subbundle. This condition could be removed if one worked with filtrations of the Lie algebra of vector fields by $\CCC^{\infty}(M)$ sub-modules as in \cite{AndroulidakisMohsenYunken}.
\end{rem}

\begin{ex}In the same fashion, let $H^1 \subset \cdots \subset H^r = TM$ be a Lie filtration, and $\mathcal{F} \subset TM$ be an arbitrary foliation such that:
$$\forall i \geq 1, \left[\Gamma(\mathcal{F}),\Gamma(H^i)\right] \subset \Gamma(H^i).$$
Then $\tilde{H}^0 = \mathcal{F}$ and $\tilde{H}^i = \mathcal{F}+H^i$ defines a foliated filtration if each $\tilde{H}^i$ is a subbundle.
\end{ex}

\begin{ex}Let $\pi \colon M \to B$ be a fibration (with connected fibers). Assume $B$ is a filtered manifold, let $\bar{H^1} \subset \cdots \subset \bar{H^r} = TB$ be the corresponding Lie filtration. Let $H^0 = \ker(\diff\pi)$ be the subbundle corresponding to the foliation induced by the fibers of $\pi$. Let $H^i = \pi^*\bar{H^i} \subset TM$ for $i\geq 1$. Then $H^0 \subset H^1 \subset \cdots \subset H^r$ is a foliated filtration on $M$. Furthermore, $\pi$ induces a group isomorphism $T_{\sfrac{H}{H^0},x}M \cong T_{\bar{H},\pi(x)}B$ for every $x \in M$, i.e. $T_{\sfrac{H}{H^0}}M = \pi^*T_{\bar{H}}B$.\end{ex}

\begin{ex}Let $n \in \mathbb{N}$ and $\omega \in \Omega^1(M,\R^n)$ a (n-tuple of) differential form(s) of locally constant rank. Let $H^1 = \ker(\omega)$ and assume there exists subbundles $H^i \subset TM$ with $\Gamma(H^{i+1}) = \Gamma(H^i) + \left[\Gamma(H^i),\Gamma(H^1)\right]$. Assume the existence of $r\geq 1$ such that $H^r = TM$ (we can relax this condition, see remark \ref{VersionGroupoide}). Define also $H^0 = \ker(\diff\omega_{|H^1})$. Then $H^0 \subset \cdots \subset H^r = TM$ is a foliated filtration.\\
Indeed, $H^0$ is a foliation: if $X,Y \in \Gamma(H^0)$, then since $H^0 \subset H^1$:
$$\omega([X,Y]) = X\cdot\omega(Y) - Y\cdot\omega(X) - \diff\omega(X,Y) = 0,$$
so $[X,Y] \in \Gamma(H^1)$. Moreover, if $Z \in \Gamma(H^1)$, then the same computation shows that $[X,Z] \in \Gamma(H^1)$:
$$\diff\omega([X,Y],Z) = [X,Y]\cdot \omega(Z) - Z\cdot\omega([X,Y]) - \omega([[X,Y],Z]) = 0,$$
since by Jacobi's identity: $[[X,Y],Z] = - [[Y,Z],X] - [[Z,X],Y] \in \Gamma(H^1)$.\\
Thus far, we have shown that $H^0$ is a foliation and that it acts on $H^1$. The action on $H^i, i\geq 2$ is then easily proved by induction.
\end{ex}

\subsection{Deformation groupoids}\label{Deformation}

Following the construction of Connes' tangent groupoid (see \cite{connes1994noncommutative}), and due to its interest in pseudodifferential calculus and index theory (see e.g. \cite{debord2014adiab,DEBORD2018255,debord2019lie,van_erp_2017}), a deformation groupoid taking into account the filtered structure of $T_HM$ was constructed in \cite{choi_2019,mohsen2018deformation,van_erp_2017}. We do not recall the construction here but we still construct the associated algebroid through its module of sections:
$$\Gamma(\mathbbl{t}_HM) = \{X \in \Gamma(TM\times \R) \ / \ X_{|t=0} = 0 \ \text{ and } \ \forall i\geq 1, \ \partial^i_tX_{|t=0} \in \Gamma(H^i)\}.$$
The algebroid on $M\times \R$ thus obtained is almost injective and hence integrable, thanks to a theorem of Debord (see \cite{debord2000groupoides}), into an s-connected groupoid $\T_HM$ (which is minimal in some sense to ensure the uniqueness of such groupoid). The references listed above give explicit constructions of this deformation groupoid.
The groupoid $T_{\sfrac{H}{H^0}}M$, on the other hand, does not have a "good" deformation groupoid as it is not directly constructed from a groupoid with filtered algebroid. We can however deform its crossed product with the holonomy groupoid. In order to do this, we give another description of the algebroid of $\ho(H^0)\ltimes T_{\sfrac{H}{H^0}}M  $. \\
Let $\ttt_H^{hol}M = H^0 \oplus \faktor{H^1}{H^0} \oplus \cdots \oplus \faktor{TM}{H^{r-1}}$ be the algebroid whose anchor is given by the injection $H^0 \to TM$, and bracket of sections given by the same kind of formula as for $\ttt_HM$. We then have $ \ttt_H^{hol}M = H^0 \ltimes \ttt_{\sfrac{H}{H^0}}M $ and because $\ho(H^0)$ acts on $\ttt_{\sfrac{H}{H^0}}M$, we can adapt Nistor's proof of integration of crossed product algebroids from \cite{nistor2000groupoids} and integrate $\ttt_H^{hol}M$ into $ \ho(H^0)\ltimes T_{\sfrac{H}{H^0}}M$.
Let us now define the deformation algebroid through its module of sections:
$$\Gamma(\mathbbl{t}^{hol}_HM) = \{X \in \Gamma(TM\times \R) \ / \ \forall i\geq 0, \ \partial^i_tX_{|t=0} \in \Gamma(H^i)\}.$$
It is a subalgebroid of the algebroid of vector fields on $M \times \R$, hence a singular foliation. This foliation is almost regular, the anchor being injective exept for $t = 0$. We can thus construct, by Debord's result, its holonomy groupoid $\mathbb{T}_H^{hol}M$. It is a smooth groupoid over $M\times \R$ that integrates $\mathbbl{t}^{hol}_HM$ and that algebraically satisfies:
$$\mathbb{T}_H^{hol}M = M\times M \times \R^* \sqcup \ho(H^0)\ltimes T_{\sfrac{H}{H^0}}M \times \{0\}.$$
Another construction of $\T_H^{hol}M$ which details the smooth structure is given in \cite{mohsen2018deformation}, example 5.4.

\section{Symbolic calculus}

\subsection{Symbols in the filtered calculus}\label{SectionSymboles}

Let us recall the definition of symbols in the context of filtered calculus. In the following, $\pi \colon G \to M$ denotes a bundle of Carnot Lie groups (i.e. the fibers $G_x, x \in M$ are simply connected nilpotent Lie groups with Lie algebras $\mathfrak{g}_x$ having a decomposition $\mathfrak{g}_x = \oplus_{i = 1}^r \mathfrak{g}_{x,i}$ satisfying $\forall i,j, \ [\mathfrak{g}_{x,i},\mathfrak{g}_{x,j}] \subset \mathfrak{g}_{x,i+j}$). In our setting, we will consider $G = T_HM$ or $G = T_{\sfrac{H}{H^0}}M$. The filtration on the Lie algebra of a Carnot Lie group gives rise to a one parameter subgroup of automorphisms, $\delta_{\lambda} \colon G \to G$, given at the level of the Lie algebra by $\forall i, \diff \delta_{\lambda | \mathfrak{g}_i} = \lambda^i \Id$.

Let $\mathcal{G}$ be a Lie groupoid. In order to write integrals on groupoids and get a convolution algebra, we need the bundle of half densities $\hd$ on $\mathcal{G}$. Replacing $\CCC^{\infty}_c(\mathcal{G})$ by $\Gamma_c(\mathcal{G},\hd)$, we can write formulas like $f\ast g (\gamma) = \int_{\gamma_1\gamma_2 = \gamma} f(\gamma_1)g(\gamma_2)$. We will hence omit the bundle when writing $\CCC^{\infty}_c(\mathcal{G})$ instead of $\Gamma_c(\mathcal{G},\hd)$. However, we will write it if any other bundle is involved (like $\Gamma(\mathcal{G},E\otimes \hd)$ if $E$ is a vector bundle over $\mathcal{G}$). Equivalently, one can also replace half-densities by a choice of a Haar system for $\mathcal{G}$, which would correspond to a trivialization of $\hd$, both yield the same convolution algebra for $\mathcal{G}$. \\
For distributions, $\mathcal{E}'(\mathcal{G})$ will denote the topological dual of $\Gamma(\mathcal{G},\hd)$. The use of half-densities over 'regular' 1-densities or functions is then convenient because we avoid any choice of Haar system, and still have (with our notations) $\CCC^{\infty}_c(\mathcal{G}) \inj \mathcal{E}'(G)$. Indeed for $\alpha \in [0;1]$ we have $(\Omega^{\alpha})^* = \Omega^{1-\alpha}$.

\begin{mydef}A symbol of order $m \in \Z$ in $G$ is a distribution $u \in \mathcal{D}'(G)$ satisfying:
\begin{itemize}
\item $u$ is properly supported, i.e. $\pi \colon \supp(u) \to M$ is a proper map.
\item $u$ is transversal to $\pi$ (in the sense of \cite{androulidakis2009holonomy}).
\item $\forall \lambda \in \R^*_+, \delta_{\lambda *}u -\lambda^m u \in \CCC^{\infty}_p(G)$.
\end{itemize}
$S_p^m(G)$ denotes the set of these distributions, and $S^*_p(G) = \bigcup_{m \in \Z}S_p^m(G)$.
The subspace of compactly supported symbols will be denoted by $S_c^m(G)$. We will write $S^m(G)$ when the statements apply for both $S^m_p(G)$ and $S^m_c(G)$.
\end{mydef}

The second condition allows us to "disintegrate" $u$ along the fibers to obtain a $C^{\infty}$ family of distributions $u_x \in \mathcal{D}'(G_x)$ (see \cite{lescuremanchonvassout} for a more precise statement and more details on transversality of distributions). The first condition then implies that the support of each $u_x$ is a compact subset of the fiber $G_x$ for every $x \in M$, i.e. $u_x \in \mathcal{E}'(G_x)$.
The third condition implies that the singularities of $u$ are located on the zero section $M \subset G$, and also gives the asymptotic behavior of $u$ near $M$.\\
The symbols used here being quasi-homogeneous, they will correspond to principal symbols of H-pseudodifferential operators up to $\CCC^{\infty}_p(G)$ functions. For compactly supported symbols, the quasi-homogeneity condition can equivalently be stated $\mod \CCC^{\infty}_c(G)$.\\
Note that here, the push-forward of distributions is defined by duality with the pullback of half-densities. Let $\lambda > 0$ and $f \in \CCC^{\infty}_c(G)$, we have: 
$$\delta_{\lambda}^*f = \lambda^{n/2}f \circ \delta_{\lambda},$$ 
where $n$ denotes the homogeneous dimension of $G$, i.e. $n = \sum_{i\geq 1} i \rk(\g_i)$. A quick computation shows that, similarly:
$$\delta_{\lambda *}f = \lambda^{n/2} f \circ \delta_{\lambda^{-1}} = \lambda^n \delta_{\lambda^{-1}}^*f.$$
These morphisms define actions of $\R^*_+$ on $\CCC^{\infty}(G), \CCC^{\infty}_c(G)$ and their respective duals. They are compatible with the convolution product for compactly supported functions and distributions.
\begin{rem} As we use properly supported distributions, we do the same for functions: $\CCC^{\infty}_p(\cdot)$ denotes the space of properly supported functions on a groupoid (if $G$ is a groupoid, $X\subset G$ is proper if $s_{|X}$ and $r_{|X}$ are proper maps). With these notations, we see that $\CCC^{\infty}_p(G)$ embeds into the space of properly supported fibered distributions on $G$ (an arbitrary Lie groupoid).\\
Although we will mainly use compactly supported symbols because of their analytic properties (see theorem \ref{OpSymbols} below), we introduced properly supported distributions as they naturally arise when considering deformation groupoids in the framework of \cite{van_erpyunken}.
\end{rem}

\begin{rem}We will also use symbols acting on vector bundles. Let $E,F$ be vector bundles over $M$. We write $S^0(G;E,F)$ for the space of distributions with values in $\End(E,F)$. This means that for every $x \in M$, $u_x \in \mathcal{D}'(G_x) \otimes E_x^* \otimes F_x$. We also write $S^0(G;E) = S^0(G;E,E)$. In order to keep lighter notations, the general results for symbols in the filtered calculus will be stated without vector bundles (note that we need $E=F$ in order to use the convolution product below).
\end{rem}

We recall the useful results on filtered calculus, for more details and proofs we refer to \cite{van_erpyunken, mohsen2020index} and their references.

\begin{lem}Let $u\in S^{m_1}(G), v\in S^{m_2}(G)$ then $u\ast v \in S^{m_1+m_2}(G)$ and $u^* \in S^{m_1}(G)$. This result can be extended to $m_1 = -\infty$ or $m_2 = -\infty$ with $S^{-\infty}_{c/p}(G) = \CCC^{\infty}_{c/p}(G)$.\end{lem}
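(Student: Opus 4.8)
The plan is to prove the two closure statements by disintegrating along the fibers of $\pi\colon G\to M$ and reducing everything to the behaviour of a smooth family of compactly supported distributions $u_x\in\mathcal E'(G_x)$ on a bundle of Carnot groups, where convolution and adjoint are the fiberwise operations. I will treat the three defining conditions of $S^m$ in turn for $w=u\ast v$ and for $u^*$, using that the class of properly (resp.\ compactly) supported smooth functions $\CCC^\infty_{p/c}(G)$ is stable under convolution and adjoint, a fact recorded just before the statement.

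First I would check support and transversality. For the adjoint $u^*$, defined fiberwise by $u_x^*(\gamma)=\overline{u_x(\gamma^{-1})}$ (the inversion being a diffeomorphism of each fiber $G_x$ fixing the unit), the support is just the image of $\supp(u)$ under inversion, so properness over $M$ and transversality to $\pi$ are immediate; the order is unchanged because inversion on a Carnot group conjugates the dilation $\delta_\lambda$ to itself, so $\delta_{\lambda*}(u^*)-\lambda^m u^*=(\delta_{\lambda*}u-\lambda^m u)^*\in\CCC^\infty_{p/c}(G)$ by stability of the smooth classes under adjoint. For the convolution $w=u\ast v$, I would note $\supp(w_x)\subset\supp(u_x)\cdot\supp(v_x)$ inside $G_x$; since each factor is compact (properness of $\pi$ on the supports plus the already-noted compactness of fiberwise supports) the product is compact, and it varies smoothly, giving properness over $M$ (and compact support in the compactly supported case). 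Transversality of $w$ to $\pi$ is the statement that this fiberwise convolution genuinely assembles into a fibered distribution on $G$; this is where I would invoke the transversality-of-distributions formalism of \cite{androulidakis2009holonomy, lescuremanchonvassout}, under which convolution of two $\pi$-transversal families is again $\pi$-transversal — essentially because convolution is given by a proper pushforward along the division map $G\times_M G\to G$, which is a submersion, and pushforward along submersions preserves transversality.

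The heart of the argument is the third, quasi-homogeneity, condition. I would use that $\delta_\lambda$ is a groupoid automorphism of $G$ (fiberwise a Lie group automorphism), hence $\delta_{\lambda*}$ is an algebra homomorphism for convolution: $\delta_{\lambda*}(u\ast v)=(\delta_{\lambda*}u)\ast(\delta_{\lambda*}v)$. Writing $u=\lambda^{m_1}\delta_{\lambda^{-1}*}(\delta_{\lambda*}u)$ is circular; instead I set $a_\lambda=\delta_{\lambda*}u-\lambda^{m_1}u\in\CCC^\infty_{p/c}(G)$ and $b_\lambda=\delta_{\lambda*}v-\lambda^{m_2}v\in\CCC^\infty_{p/c}(G)$ and expand
$$\delta_{\lambda*}(u\ast v)=(\lambda^{m_1}u+a_\lambda)\ast(\lambda^{m_2}v+b_\lambda)=\lambda^{m_1+m_2}\,u\ast v+\lambda^{m_1}u\ast b_\lambda+\lambda^{m_2}a_\lambda\ast v+a_\lambda\ast b_\lambda.$$
The last three terms each involve at least one factor in $\CCC^\infty_{p/c}(G)$; by the preceding lemma (convolution of a symbol of finite order with a smooth properly/compactly supported function, i.e.\ with a symbol of order $-\infty$, lands in $\CCC^\infty_{p/c}(G)$) each of them lies in $\CCC^\infty_{p/c}(G)$, so $\delta_{\lambda*}(u\ast v)-\lambda^{m_1+m_2}u\ast v\in\CCC^\infty_{p/c}(G)$, giving $u\ast v\in S^{m_1+m_2}(G)$.

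The main obstacle I anticipate is not the algebraic bookkeeping but making the convolution of two $\pi$-transversal, properly supported fibered distributions rigorously well-defined and again transversal — i.e.\ checking that the fiberwise products $u_x\ast v_x$ patch into an honest element of $\mathcal D'(G)$ with the right transversality. This requires care with the pullback/pushforward of half-densities along the division map of the groupoid and with wavefront-set conditions ensuring the pushforward is defined; I would handle it by citing \cite{androulidakis2009holonomy, lescuremanchonvassout} and the continuity of convolution in that framework, which is precisely the setting those references were built for. The extension to $m_1=-\infty$ or $m_2=-\infty$ then follows from the same expansion with the corresponding $u$ or $v$ already in $\CCC^\infty_{p/c}(G)$, and from the stability of $\CCC^\infty_{p/c}(G)$ under convolution and adjoint.
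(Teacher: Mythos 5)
The paper does not actually prove this lemma; it is recalled from \cite{van_erpyunken, mohsen2020index}, so there is no in-paper argument to compare against. Your overall strategy (fiberwise disintegration, separate support and transversality checks, and the expansion $\delta_{\lambda*}(u\ast v)=(\lambda^{m_1}u+a_\lambda)\ast(\lambda^{m_2}v+b_\lambda)$ with $a_\lambda,b_\lambda\in\CCC^{\infty}_{p/c}(G)$) is the standard one and matches what those references do.

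However, as written your argument is circular at its key step. To dispose of the cross terms $u\ast b_\lambda$ and $a_\lambda\ast v$ you invoke ``the preceding lemma'' asserting that a finite-order symbol convolved with an element of $\CCC^{\infty}_{p/c}(G)$ lands in $\CCC^{\infty}_{p/c}(G)$. No such lemma precedes the statement in the paper --- that assertion is exactly the $m_2=-\infty$ (resp.\ $m_1=-\infty$) case of the lemma you are proving. And your final paragraph claims this case ``follows from the same expansion'', which it cannot: the expansion only controls quasi-homogeneity modulo smooth functions, whereas the $-\infty$ case is a regularity statement, namely that $u\ast f$ is itself a smooth function. What is missing is a direct proof of the smoothing property: for $u\in S^{m_1}(G)$ with fiberwise compactly supported disintegration $u_x\in\mathcal{E}'(G_x)$ and $f\in\CCC^{\infty}_{p/c}(G)$, the function $\gamma\mapsto\langle u_x,\eta\mapsto f(\eta^{-1}\gamma)\rangle$ is smooth (pairing of a compactly supported distribution against a smoothly varying family of test half-densities), together with the support estimate $\supp(u\ast f)\subset\supp(u)\cdot\supp(f)$ showing the result is again properly (resp.\ compactly) supported. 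This is elementary --- it is the same fact that makes $\op(u)$ map $\CCC^{\infty}_c(G_x)$ into $\CCC^{\infty}_c(G_x)$, which the paper records immediately after the lemma --- but it must be established first, independently of the expansion. Once it is in place, your treatment of the cross terms and hence of the finite-order case is correct, and the $m_i=-\infty$ extension is this smoothing property itself rather than a consequence of the expansion.
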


Let $u \in S^*(G)$, for each $x \in M$ we get a convolution operator 
$$\begin{matrix}
\op(u_x) & \colon & \CCC^{\infty}_c(G_x) & \longrightarrow     & \CCC^{\infty}_c(G_x) \\
         &        &  f                   & \longmapsto & u_x \ast f &
\end{matrix}.$$
The transversality condition then allows us to "glue" these operators to obtain\\
$\op(u) \colon \CCC^{\infty}_c(G) \to \CCC^{\infty}_c(G)$ with $\op(u)(f)_{|\pi^{-1}(x)} = \op(u_x)(f_{|\pi^{-1}(x)})$.

\begin{lem}Let $u,v \in S^*(G)$ and $f,g \in \CCC^{\infty}_c(G)$ then:
\begin{itemize}
\item $\op(u)(f\ast g) = \op(u)(f)\ast g$.
\item $\op(u\ast v) = \op(u) \circ \op(v)$.
\end{itemize}
\end{lem}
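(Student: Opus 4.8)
The plan is to reduce both identities to the corresponding statements about the fiberwise convolution operators $\op(u_x)$, $\op(v_x)$ on the groups $G_x$, for which they are immediate. Everything hinges on the compatibility of the "gluing" construction of $\op(\cdot)$ with disintegration along $\pi$: by construction $\op(u)(f)_{|\pi^{-1}(x)} = \op(u_x)(f_{|\pi^{-1}(x)})$, and similarly the convolution product $f \ast g$ on $\CCC^\infty_c(G)$ restricts fiberwise to the convolution $f_{|\pi^{-1}(x)} \ast g_{|\pi^{-1}(x)}$ on $\CCC^\infty_c(G_x)$ (this is just the fact that the groupoid $G = T_H M$ or $T_{\sfrac{H}{H^0}}M$ is a bundle of groups over $M$, so there is no mixing of fibers). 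Thus it suffices to prove the two bullet points for a single Carnot group $G_x$, i.e. for honest compactly supported distributions and functions on a nilpotent Lie group.

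For the first identity, I would fix $x \in M$ and write out $\op(u_x)(f_x \ast g_x) = u_x \ast (f_x \ast g_x)$ where $f_x = f_{|\pi^{-1}(x)}$, etc. Associativity of the convolution product on $\mathcal{E}'(G_x) \ast \CCC^\infty_c(G_x)$ — which is legitimate because $u_x$ is compactly supported as a consequence of the proper-support and transversality conditions in the definition of a symbol — gives $u_x \ast (f_x \ast g_x) = (u_x \ast f_x) \ast g_x = \op(u_x)(f_x) \ast g_x$. Reassembling over $x \in M$ via the transversality/gluing property yields $\op(u)(f \ast g) = \op(u)(f) \ast g$ on $G$.

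For the second identity I would similarly compute fiberwise: $\op(u \ast v)(f)_{|\pi^{-1}(x)} = (u \ast v)_x \ast f_x$. Here I need that disintegration is compatible with the convolution product of symbols, i.e. $(u \ast v)_x = u_x \ast v_x$ in $\mathcal{E}'(G_x)$; this follows from the fact (already invoked in the previous lemma, that $u \ast v \in S^{m_1+m_2}(G)$) that the groupoid convolution on $S^*(G)$ is defined fiber-by-fiber for a bundle of groups. Then $(u_x \ast v_x) \ast f_x = u_x \ast (v_x \ast f_x) = \op(u_x)(\op(v_x)(f_x)) = \big(\op(u) \circ \op(v)\big)(f)_{|\pi^{-1}(x)}$, again by associativity of convolution with one compactly supported distribution factor, and gluing over $M$ gives $\op(u \ast v) = \op(u) \circ \op(v)$.

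The only genuine point requiring care — the main obstacle, such as it is — is justifying associativity of the convolution products once a distributional factor is involved, and making sure the pairing is well-defined: this is exactly where the hypotheses that $u,v$ are properly supported and transversal to $\pi$ (forcing $u_x, v_x \in \mathcal{E}'(G_x)$) are used, together with the smoothness of the disintegration $x \mapsto u_x$, which guarantees that the assembled operator $\op(u)$ preserves $\CCC^\infty_c(G)$ and that all the fiberwise identities glue to smooth families. Once these standard facts about convolution on nilpotent groups and fibered distributions (for which we may cite \cite{lescuremanchonvassout, van_erpyunken}) are in hand, the proof is a routine fiberwise verification.
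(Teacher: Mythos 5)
Your argument is correct: since $G$ is a bundle of groups, both the convolution product on $\CCC^{\infty}_c(G)$ and the operators $\op(\cdot)$ are defined fiberwise, so both identities reduce to associativity of convolution on each Carnot fiber $G_x$ with compactly supported factors (the proper-support and transversality hypotheses guaranteeing $u_x, v_x \in \mathcal{E}'(G_x)$ and the smooth gluing). The paper itself gives no proof of this lemma --- it is recalled from \cite{van_erpyunken, mohsen2020index} --- and your fiberwise reduction is exactly the standard argument intended there.
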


\begin{lem}Let $G_1,G_2$ be bundles of Carnot Lie groups over $M$, and $\phi \colon G_1 \to G_2$ a submersive homomorphism of groups bundle which is graduation preserving. $\phi$ induces a map $\phi_* \colon S^*(G_1) \to S^*(G_2)$ obtained by duality with the pullback of functions. It integrates a distribution along the fibers of $\phi$. This map preserves:
\begin{itemize}
\item The order.
\item The sum of elements of the same degree.
\item The adjoint.
\item The convolution product.
\end{itemize}
\end{lem}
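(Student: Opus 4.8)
The plan is to construct $\phi_*$ fibrewise and then reassemble the result, using the transversality condition to move between the two viewpoints. Since $\phi$ is a submersive homomorphism of bundle groups, its kernel $K:=\ker\phi$ is a sub-bundle of Carnot groups of $G_1$, normal in each fibre, and dilation-invariant: the grade-preserving hypothesis gives $\delta^{G_2}_\lambda\circ\phi=\phi\circ\delta^{G_1}_\lambda$, hence $\delta^{G_1}_\lambda(K)=K$, and $\phi$ exhibits $G_1$ as a locally trivial fibre bundle over $G_2$ with fibre $K_x$ over a point of $G_{2,x}$. Fix the fibrewise (bi-invariant) Haar density $\mu_K$ on $K$. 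Given $u\in S^m(G_1)$, transversality of $u$ to $\pi_1\colon G_1\to M$ disintegrates it into a smooth family $u_x\in\mathcal{E}'(G_{1,x})$, each $u_x$ compactly supported in the fibre by the properness condition. For each $x$ the homomorphism $\phi_x\colon G_{1,x}\to G_{2,x}$ is proper on $\supp u_x$ (which is compact), so the pushforward $(\phi_x)_\ast u_x\in\mathcal{E}'(G_{2,x})$ — which is simply the integration of $u_x$ along the cosets of $K_x$ against $\mu_{K_x}$ — is well defined, and I would set $(\phi_\ast u)_x:=(\phi_x)_\ast u_x$.

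Next I would check that $\phi_\ast u$, the distribution on $G_2$ reassembled from the family $(\phi_x)_\ast u_x$, is again a symbol of the same order. Properness (resp. compact support) is immediate from $\pi_2\circ\phi=\pi_1$, since $\supp(\phi_\ast u)\subset\phi(\supp u)$ has proper (resp. compact) image under $\pi_2$. Transversality of $\phi_\ast u$ to $\pi_2$ amounts to the smoothness of $x\mapsto(\phi_x)_\ast u_x$, which follows from the smoothness of $x\mapsto u_x$ and the continuity of pushforward of distributions along the smoothly varying family of submersions $\phi_x$; this is exactly the sort of statement the disintegration formalism of \cite{lescuremanchonvassout,androulidakis2009holonomy} is designed for. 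For the order, one uses $\phi\circ\delta^{G_1}_\lambda=\delta^{G_2}_\lambda\circ\phi$ together with the change of variables $k\mapsto\delta^{G_1}_\lambda(k)$ on $K_x$, whose Jacobian with respect to $\mu_{K_x}$ is $\lambda^{q}$ where $q$ is the homogeneous dimension of $K$; since the homogeneous dimensions satisfy $n_1=n_2+q$, the normalization factors match and one obtains $\delta^{G_2}_{\lambda\ast}\circ\phi_\ast=\phi_\ast\circ\delta^{G_1}_{\lambda\ast}$ on distributions. Combining this with the fact that $\phi_\ast$ sends $\CCC^{\infty}_{c/p}(G_1)$ into $\CCC^{\infty}_{c/p}(G_2)$ (fibrewise integration of a smooth properly/compactly supported function over $K$ stays smooth and properly/compactly supported) yields, for $u\in S^m(G_1)$,
$$\delta^{G_2}_{\lambda\ast}(\phi_\ast u)-\lambda^m\phi_\ast u=\phi_\ast\!\left(\delta^{G_1}_{\lambda\ast}u-\lambda^m u\right)\in\CCC^{\infty}_{c/p}(G_2),$$
so that $\phi_\ast u\in S^m(G_2)$, i.e. the order is preserved.

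It remains to verify the other three compatibilities. Linearity of $\phi_\ast$ is clear, which gives the statement about sums of elements of the same degree. For the adjoint and the convolution product I would argue fibrewise: $\phi_x$ being a group homomorphism with normal kernel $K_x$ (and all inner automorphisms of the nilpotent group $G_{1,x}$ being unipotent, hence Haar-preserving), the coset-integration map $(\phi_x)_\ast$ is precisely the map $\CCC^{\infty}_c(G_{1,x})\to\CCC^{\infty}_c(G_{2,x})$ underlying the surjective $\ast$-homomorphism $C^\ast(G_{1,x})\to C^\ast(G_{2,x})\cong C^\ast(G_{1,x}/K_x)$, so it intertwines convolution and the involution $f\mapsto f^\ast$; both identities pass from compactly supported smooth functions to the symbols $u_x,v_x\in S^\ast$ by continuity (only one factor needs compact support for the convolution, but here both supports are compact). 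Equivalently one can verify this on operators, using that $\op(\phi_\ast w)$ is the operator induced by $\op(w)$ on $K_x$-invariant functions (a direct computation with $w\ast\phi^\ast h$), so $\op(\phi_\ast(u\ast v))=\op(\phi_\ast u)\circ\op(\phi_\ast v)=\op(\phi_\ast u\ast\phi_\ast v)$, and concluding by injectivity of $\op$ on symbols. Reassembling the fibrewise identities over $M$ gives the claim.

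The main obstacle I anticipate is not conceptual but bookkeeping: one must track the half-density and Jacobian normalizations carefully, both to make the definition of $\phi_\ast$ canonical (independent of the chosen local lifts and of the normalization of $\mu_K$, modulo the standard identifications) and, above all, to confirm the balance $n_1=n_2+q$ that makes the \emph{order} be preserved rather than shifted. The secondary point needing care is the smoothness of the disintegrated family $x\mapsto(\phi_x)_\ast u_x$ (equivalently, transversality of $\phi_\ast u$ to $\pi_2$), though this is routine once the transversality/disintegration machinery already invoked in the paper is in place.
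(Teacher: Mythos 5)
Your proof is correct and takes essentially the same route as the paper: define $\phi_*$ as the pushforward along the submersive bundle homomorphism, obtain properness and transversality from the fact that $\phi$ is a fibered map, and reduce quasi-homogeneity to the dilation-equivariance $\phi\circ\delta^{G_1}_{\lambda}=\delta^{G_2}_{\lambda}\circ\phi$ combined with $\phi_*(\CCC^{\infty}_{c/p}(G_1))\subset\CCC^{\infty}_{c/p}(G_2)$, the paper merely being terser (it defers the support and smoothness verifications to Lemmas \ref{Support} and \ref{QuasiHom} and declares the algebraic compatibilities clear). Your explicit bookkeeping of the homogeneous-dimension balance $n_1=n_2+q$ and the identification of $\phi_*$ with the quotient $*$-homomorphism $C^*(G_{1,x})\to C^*(G_{1,x}/K_x)$ fill in details the paper leaves implicit.
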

\begin{proof}
To show that $\phi_*$ indeed maps $S^*(G_1)$ to $S^*(G_2)$, we need to show that it preserves transversality and quasi-homogeneity. For transversality, it is obvious since $\phi$ is a bundle map. Moreover, since $\phi$ is a morphism between Carnot Lie groups bundles, then it is equivariant with respect to the respective $\R^*_+$-actions on $G_1$ and $G_2$. The conservation of the quasi-homogeneity thus reduces to the fact that $\phi_*(\CCC^{\infty}_p(G_1)) \subset \CCC^{\infty}_p(G_2)$. The proof is then very similar to the one of lemmas \ref{Support} and \ref{QuasiHom} so we refer to these further proofs.\\
The sum, convolution product, and adjoint are then clearly preserved by $\phi_*$.
\end{proof}

We now state the Rockland condition and its consequences on the symbolic calculus. Rockland condition replaces the usual ellipticity condition. Moreover, when the symbols yield operators on $M$ (i.e. for $G = T_HM$, see \cite{van_erpyunken}), the operators whose symbols satisfy Rockland's condition are hypoelliptic and admit parametrices (in the filtered calculus). This condition was first introduced by Rockland in \cite{Rockland} for differential operators on Heisenberg groups. It was then extended to manifolds with osculating Lie group of rank at most two by Helffer and Nourrigat in \cite{HelfferNourrigat}, and Lie groups with dilations in \cite{Groupsdilation}. See also the recent advances in \cite{AndroulidakisMohsenYunken}, in a very broad setting, generalizing the results of Helffer and Nourrigat.

\begin{mydef}A symbol $u \in S^*(G)$ satisfies the Rockland condition at $x \in M$ if there exist a compact set $K \subset \widehat{G_x}$ on the dual space of irreducible representations such that, for all $\pi \notin K$, $\pi(\op(u))$ and $\pi(\op(u^*))$ are injective. The symbol satisfies the Rockland condition if it satisfies it at every point $x \in M$ ($u$ is then also said to be Rockland).
\end{mydef}

\begin{ex} If $H^1 = TM$, $T_HM$ and $T_{\sfrac{H}{H^0}}M$ are abelian group bundles, hence $\widehat{T_HM}_x = T_x^*M$ and $\widehat{T_{\sfrac{H}{H^0}}M}_x = (H^0)^{\perp}$. Therefore, the Rockland condition for symbols on $TM$ and $\faktor{TM}{H^0}$ corresponds respectively to the classical notions of ellipticity and transverse ellipticity. \end{ex}

As in the classical case, Rockland condition corresponds exactly to the existence of parametrices. It was proved in \cite{Groupsdilation} for trivial bundles and this result was used in \cite{dave2017graded} for the proof in the general case.

\begin{thm}[\cite{dave2017graded}]\label{RocklandEq}Let $u \in S^n_p(G)$ then there is an equivalence:
\begin{itemize}
\item[1] $u$ satisfies the Rockland condition.
\item[2] There exists $v \in S^{-k}_p(G)$ such that $u\ast v -1, v\ast u -1 \in \CCC^{\infty}_p(G)$.
\end{itemize}
\end{thm}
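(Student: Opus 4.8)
The plan is to establish the equivalence $1 \Leftrightarrow 2$ by localizing the problem over $M$ and invoking the known theorem for Carnot groups from \cite{Groupsdilation} (as used in \cite{dave2017graded}). First I would observe that the direction $2 \Rightarrow 1$ is the easy one: if $v \in S^{-k}_p(G)$ is a parametrix, then for each $x \in M$ and each irreducible representation $\pi$ of $G_x$, applying $\pi$ to the identities $u_x \ast v_x - 1, v_x \ast u_x - 1 \in \CCC^{\infty}_p(G_x)$ shows that $\pi(\op(u_x))$ is invertible modulo the smoothing ideal; since $\pi$ of a properly supported smooth function is compact (it lies in the $C^*$-algebra of the group and is in fact in $\pi(\CCC^\infty_c)$ after cutting down), $\pi(\op(u_x))$ is injective modulo compacts, hence injective outside a compact set $K_x \subset \widehat{G_x}$, and symmetrically for $u_x^*$. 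The slightly delicate point here is getting the compact set $K$ to be locally uniform in $x$, which follows from the continuity of the field $x \mapsto u_x$ guaranteed by transversality together with a standard semicontinuity argument for the spectrum.

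For the hard direction $1 \Rightarrow 2$, I would proceed in three steps. Step one: fix $x_0 \in M$ and choose a local trivialization of the bundle of Carnot groups $G$ over a neighborhood $U$ of $x_0$, so that $G|_U \cong U \times G_{x_0}$ as a graded bundle of groups (this is possible after shrinking $U$ since the grading is locally constant in rank and Carnot groups of a given type form a rigid class; one uses a local frame adapted to the filtration). Step two: over such a chart, a symbol becomes a smooth family $(u_x)_{x \in U}$ of elements of $\mathcal{E}'(G_{x_0})$ that are quasi-homogeneous of degree $n$ modulo smooth functions, and the Rockland condition at each $x$ is exactly the hypothesis under which \cite{Groupsdilation} (Theorem of Christ–Geller–Głowacki–Polin, or the version in \cite{dave2017graded}) produces a parametrix $v_x \in S^{-k}(G_{x_0})$ depending smoothly on $x$ — the smoothness of the dependence is obtained by inspecting the construction of the parametrix (it is built by a Neumann-type series from a symbol whose construction is functorial in the input) or, as in \cite{dave2017graded}, by a continuous-field-of-$C^*$-algebras argument showing the parametrix is unique modulo smoothing and hence varies continuously, then bootstrapping to smoothness. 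Step three: patch the locally defined parametrices using a partition of unity $\{\chi_\alpha\}$ subordinate to a cover by such charts; since any two local parametrices agree modulo $\CCC^\infty_p$, the glued operator $v = \sum_\alpha \chi_\alpha v^{(\alpha)}$ still satisfies $u \ast v - 1, v \ast u - 1 \in \CCC^\infty_p(G)$, using Lemma on the composition of symbols and the fact that $S^{-\infty}_p(G) = \CCC^\infty_p(G)$. The properness of the support of $v$ is arranged by the properness built into the definition of $S^*_p$ and the patching.

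The main obstacle I anticipate is \textbf{the smooth (or at least continuous) dependence of the parametrix on the base point} $x$: the Rockland parametrix for a single Carnot group is constructed via a somewhat involved iteration (inverting the symbol modulo lower order terms and summing a series), and one must check that none of these steps destroys smoothness in $x$. The cleanest route, and the one I would follow \cite{dave2017graded}, is to phrase everything in terms of the continuous field of group $C^*$-algebras $x \mapsto C^*(G_x)$ (with a dense subfield given by the symbols): the Rockland condition says $\op(u_x)$ is invertible in the Calkin-type quotient $C^*(G_x)^+ / (\text{ideal of order} < 0)$, uniformly on compacts in $x$, so the inverse exists in the field; one then shows this inverse is represented by a genuine symbol $v$ (not just an abstract multiplier) by the standard regularization argument for the filtered calculus. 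A secondary, more bookkeeping-type obstacle is ensuring that "injective outside a compact set in $\widehat{G_x}$" really does upgrade, via the filtered calculus machinery, to invertibility modulo $\CCC^\infty_p$ — but this is precisely the content of the cited results in \cite{Groupsdilation} and \cite{dave2017graded}, so I would quote it rather than reprove it.
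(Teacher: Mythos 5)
First, a point of calibration: the paper does not prove this theorem at all. It is quoted from \cite{dave2017graded}, with the remark that the arguments originate in \cite{Groupsdilation} (the case of a single graded group, i.e.\ a trivial bundle). So your proposal can only be measured against the cited literature. Your direction $2 \Rightarrow 1$ is essentially right and standard: from $v_x \ast u_x = 1 + g_x$ with $g_x \in \CCC^{\infty}_c(G_x)$ one gets $\pi(v_x)\pi(u_x)$ invertible whenever $\|\pi(g_x)\| < 1$, and the set $\{\pi \in \widehat{G_x} : \|\pi(g_x)\| \geq 1\}$ is compact because $g_x \in C^*(G_x)$; injectivity of $\pi(u_x)$ (and symmetrically of $\pi(u_x^*)$) outside that set follows. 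Your phrasing via "injective modulo compacts" is looser than needed, but the idea is correct.

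The genuine gap is in Step One of the hard direction. You assume that $G$ is locally trivial as a bundle of \emph{graded groups}, i.e.\ $G|_U \cong U \times G_{x_0}$, on the grounds that "the grading is locally constant in rank and Carnot groups of a given type form a rigid class." This is false in general: the underlying graded vector bundle $\ttt_HM$ is of course locally trivial, but the fiberwise Lie bracket is a bundle map $\Lambda^2\ttt_HM \to \ttt_HM$ whose pointwise isomorphism type can jump. For example, take $M = \R^4$ with a rank-two subbundle $H^1$ and $H^2 = TM$; the bracket condition is vacuous, and the osculating algebra at $x$ is $H^1_x \oplus \R^2$ with bracket the Levi form, which may vanish at some points (abelian fiber) and not at others (Heisenberg $\times\, \R$ fiber). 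So one cannot reduce to a smooth family of convolution kernels on a \emph{fixed} Carnot group, and the "smooth dependence of the parametrix on $x$" problem you correctly single out cannot even be posed in the form you set up. The proof in \cite{dave2017graded} instead applies \cite{Groupsdilation} strictly fiberwise to produce $b_{x_0}$ on the single group $G_{x_0}$, extends it to a nearby symbol using only the vector-bundle (not group-bundle) local triviality, and then corrects the resulting error — which is small near $x_0$ only modulo lower order — by an asymptotic Neumann series in the filtered calculus before patching; equivalently, one proves invertibility in the $C^*$-closure of the cosymbol algebra and then invokes spectral invariance of the smooth cosymbols in that closure. Your fallback "continuous field of $C^*$-algebras" paragraph gestures at this, but it defers exactly the two hard points (why the abstract inverse is again a quasi-homogeneous distribution, and why invertibility holds uniformly near $x_0$) to "the standard regularization argument," which is the actual content of the theorem. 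The partition-of-unity patching in Step Three is fine once correct local parametrices exist.
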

A proof can be found in \cite{dave2017graded}, using arguments originating in \cite{Groupsdilation} (which corresponds to the case of trivial bundles).

\begin{thm}[\cite{dave2017graded}]\label{OpSymbols}Let $u \in S^n_c(G)$ then:
\begin{itemize}
\item If $n\leq 0$ then $\op(u)$ extends to an element of the multiplier algebra $\m(C^*(G))$.
\item If $n<0$ then $\op(u)$ extends to an element of $C^*(G)$.
\item If $n>0$, $M$ is compact, and $u$ satisfies the Rockland condition, then $\op(u)$ extends to an unbounded regular operator $\overline{\op(u)}$ on $C^*(G)$ (viewed as a $C^*$-module over itself), and $\overline{\op(u)}^* = \overline{\op(u^*)}$.
\end{itemize}
\end{thm}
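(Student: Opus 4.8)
The plan is to treat the three bullets separately, since they have increasing difficulty, and to reduce everything to the fiberwise picture provided by the transversality/disintegration $u \mapsto (u_x)_{x \in M}$, $u_x \in \mathcal{E}'(G_x)$, combined with the known representation-theoretic facts for a single Carnot group (as in \cite{Groupsdilation, dave2017graded}). For the first bullet, with $n \leq 0$, I would show that convolution by $u$ extends to a bounded adjointable operator on $C^*(G)$. The key point is that $\op(u)$ commutes with right convolution (Lemma on $\op(u)(f\ast g)=\op(u)(f)\ast g$), so $\op(u)$ is a right-module map on the dense submodule $\CCC^\infty_c(G) \subset C^*(G)$; boundedness then follows from a fiberwise estimate: for each irreducible representation $\pi$ of $G_x$, $\pi(\op(u_x))$ is bounded with a bound uniform in $\pi$ and locally uniform in $x$, because $u_x$ is quasi-homogeneous of order $\leq 0$ and $\delta_{\lambda\ast}u_x - \lambda^m u_x$ is smooth (hence its operator norm is controlled after rescaling — this is exactly the Carnot-group analogue of the Calderón–Vaillancourt/$L^2$-boundedness of order-$0$ symbols). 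Adjointability comes from the formula $\op(u)^* = \op(u^*)$ together with $u^* \in S^0$.

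For the second bullet, $n < 0$, I would upgrade boundedness to membership in $C^*(G)$ itself. The cleanest route is to show $\op(u) = \lim \op(u_k)$ in multiplier norm with $u_k \in \CCC^\infty_c(G) = S^{-\infty}_c(G)$, or equivalently to exhibit $\op(u)$ as a norm-limit of elements of $C^*(G)$. Concretely: pick $\chi \in \CCC^\infty_c(G)$ equal to $1$ near the unit section; write $u = \chi u + (1-\chi)u$, note $(1-\chi)u \in \CCC^\infty_c(G) \subset C^*(G)$, and handle $\chi u$ by a dyadic decomposition $\chi u = \sum_j \psi_j$ using the dilations $\delta_{2^{-j}}$ and quasi-homogeneity: each piece has uniformly bounded operator norm and, because $m = n < 0$, the geometric series $\sum_j 2^{jm}$ converges, so the partial sums — which lie in $C^*(G)$ since each $\psi_j$ is a rescaled compactly supported smooth function plus a smooth error — converge in norm to $\op(u)$. (This is the standard argument that negative-order pseudodifferential operators are compact/in the groupoid $C^*$-algebra, transported to the filtered setting; the analytic input is exactly Theorem \ref{OpSymbols}'s first bullet applied to the rescaled pieces.)

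For the third bullet, $n > 0$, $M$ compact, $u$ Rockland, I would invoke Theorem \ref{RocklandEq} to get a parametrix $v \in S^{-n}_c(G)$ with $u\ast v - 1, v\ast u - 1 \in \CCC^\infty_c(G) = S^{-\infty}_c(G)$. By the first two bullets, $\op(v) \in C^*(G)$ and $\op(u\ast v - 1), \op(v\ast u - 1) \in C^*(G)$. The plan is then to define the unbounded operator $\op(u)$ on the domain $\mathrm{dom}(\op(u)) = \{\xi \in C^*(G) : \op(u)\xi \in C^*(G)\}$ (acting via multipliers), observe it is densely defined since $\CCC^\infty_c(G)$ is in the domain, and verify regularity by checking that $1 + \op(u^*)\op(u)$ has dense range — here one uses that $\op(v)\op(v^*)$ (or rather an honest parametrix of $1 + \op(u^*)\op(u)$, built from $v$ and $v^*$) inverts it modulo $C^*(G)$, so $\mathrm{im}(1+\op(u)^*\op(u))$ contains a dense ideal. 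Closedness of $\op(u)$ follows because its graph is the image of the bounded adjointable map $\xi \mapsto (\op(v)\xi, \xi - \op(vu-1)\xi)$ up to smoothing corrections; and the adjoint identity $\overline{\op(u)}^* = \overline{\op(u^*)}$ is obtained from $\op(u)^* = \op(u^*)$ on $\CCC^\infty_c(G)$ plus a density/core argument (with $\CCC^\infty_c(G)$ a common core, which again uses the parametrix to see it is dense in the graph norm).

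The main obstacle I expect is the uniform fiberwise $L^2$-boundedness for order-$0$ symbols underlying the first bullet — i.e. controlling $\|\pi(\op(u_x))\|$ uniformly over $\widehat{G_x}$ and locally uniformly in $x$ — since all the subsequent parts bootstrap from it; but this is precisely the content cited from \cite{Groupsdilation, dave2017graded, van_erpyunken} for Carnot groups, so in practice the real work is organizing the reduction to the fibers via the transversality/disintegration and checking that the dyadic-decomposition pieces genuinely land in $C^*(G)$ rather than merely in its multipliers.
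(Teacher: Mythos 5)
The paper does not actually prove this theorem: it is quoted from \cite{dave2017graded}, and the line following the statement simply refers the reader there for a detailed proof. So there is no in-paper argument to compare yours against; what can be assessed is whether your reconstruction matches the argument in the cited sources (\cite{dave2017graded}, building on \cite{Groupsdilation} and \cite{van_erpyunken}). In broad strokes it does: fiberwise disintegration, a dyadic decomposition along the dilations for the negative-order case, and a parametrix from Theorem \ref{RocklandEq} to get regularity in the positive-order Rockland case.

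Three points in your sketch need repair. First, the order-$0$ bound cannot be obtained the way you state it: for a symbol of order exactly $0$ the rescaled dyadic pieces all have \emph{comparable} operator norms, so ``controlled after rescaling'' gives no convergent series, and there is no direct Calder\'on--Vaillancourt shortcut. The references either run a Cotlar--Stein almost-orthogonality argument on those pieces or, more commonly, establish the negative-order case first and then bootstrap to order $0$ via the H\"ormander square-root trick $u^*\ast u = C - b^*\ast b + r$ with $\ord(r)<0$; your proposed logical order (bullet one before bullet two) is therefore backwards relative to how the estimates are actually proved, though you correctly flag this as the crux. Second, in the dyadic decomposition for $n<0$ the smooth errors $\delta_{\lambda*}u-\lambda^n u$ are themselves rescaled along with the pieces, and the summability of the resulting family in $C^*$-norm must be checked; this is a genuine technical step in \cite{van_erpyunken}, not a formality. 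Third, in the unbounded case the image of a bounded adjointable map need not be closed, so your closedness argument fails as stated; one should instead define $\overline{\op(u)}$ as the closure of $\op(u)$ on $\CCC^{\infty}_c(G)$ (closedness is then automatic) and use the parametrix only for the dense range of $1+D^*D$ and the identification of the adjoint, which is the standard and correct route.
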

Once again see \cite{dave2017graded} for a detailed proof.

\begin{mydef}A symbol $u \in S^*(T_HM)$ is Rockland (i.e "elliptic" in the filtered calculus) if it satisfies the Rockland condition. It is transversally Rockland if the image of $u$ under the natural push-forward map $S^*(T_HM) \to S^*(T_{\sfrac{H}{H^0}}M)$ satisfies the Rockland condition.
\end{mydef}

Theorem \ref{OpSymbols} allows to extend the algebra of symbols. Let $\bar{S}^0_0(G) \subset \m(C^*(G))$ be the $C^*$-closure of $S^0_c(G)$ (we identify the algebra of order 0 symbols and its image by $\overline{\op}$). The algebra $\bar{S}^0_0(G)$ corresponds to the algebra of symbols vanishing at infinity. If $M$ is non-compact, such symbols cannot satisfy the Rockland condition and have a convolution inverse in $\bar{S}^0_0(G)$. We thus need to extend the algebra of symbols to the one of symbols "bounded" at infinity. More precisely, following \cite{hilsumskandalis}, let
$$C^*_M(G) = \{T \in \m(C^*(G)) \ / \ \forall f \in \CCC_0(M), \  fT, Tf \in C^*(G) \},$$
$$\bar{S}^0(G) = \{T \in \m(C^*(G) \ / \ \forall f \in \CCC_0(M), \  fT, Tf \in \bar{S}^0_0(G) \}.$$

We will also need the corresponding algebras of principal symbols :
$$\Sigma^m_c(G) = \faktor{S^m_c(G)}{\CCC^{\infty}_c(G)}, \ \Sigma^0_0(G) = \faktor{\bar{S}^0_0(G)}{C^*(G)}, \ \Sigma^0(G) = \faktor{\bar{S}^0(G)}{C^*_M(G)}.$$
Note that, according to the second criterion in theorem \ref{RocklandEq}, the Rockland condition only depends on the class of a symbol modulo $\CCC^{\infty}_p(G)$, and is thus an invertibility criterion in $\Sigma^0(G)$.
\begin{ex}If $G = G_0 \times M$ is a trivial bundle then $C^*(G) = \CCC_0(M) \otimes C^*(G_0)$, and $C^*_M(G) = \CCC_b(M) \otimes C^*(G_0)$.\\
Every symbol in $S^0_p(G)$ that extends continuously to a multiplier of $C^*(G)$ (e.g. multiplication by functions in $\CCC^{\infty}_p(G)$) defines an element of $\bar{S}^0(G)$.
\end{ex} 

\subsection{Restriction of the symbol}

Let $G_1, G_2$ be arbitrary Lie groupoids with same objects, and $\pi \colon G_1 \to G_2$ a surjective homomorphism (over the identity). Integration along the kernel of $\pi$ gives a *-homomorphism $\int_{\ker(\pi)} \colon C^*(G_1) \to C^*(G_2)$ (here the groupoid $C^*$-algebra is the maximal one, the reduced can be used if $\ker(\pi)$ is amenable\footnote{The trivial representation of $\ker(\pi)$ is then weakly contained in the regular, by continuity of induction then the regular representation of $G_2$ is weakly contained in the regular representation of $G_1$ hence the continuity for the morphism between the reduced $C^*$-algebras}). Here, we use this fact with the morphism $T_HM \to T_{\sfrac{H}{H^0}}M$ and we obtain the surjective morphism of restriction to the transverse directions:
$$\int_{H^0} \colon C^*(T_HM) \to C^*(T_{\sfrac{H}{H^0}}M).$$
By surjectivity of $\pi$, it also extends to the multiplier algebras:
$$\int_{H^0} \colon \m(C^*(T_HM)) \to \m(C^*(T_{\sfrac{H}{H^0}}M)).$$
This morphism sends principal symbols of H-pseudodifferential operators to their restriction to the directions transverse to $H^0$.

\begin{ex}If $H^1 = TM$ we have $T_HM = TM$ and $T_{\sfrac{H}{H^0}}M = \faktor{TM}{H^0}$. Under Fourier transform, we have isomorphisms $C^*(TM) \cong \CCC_0(T^*M)$ and \\
$C^*\left(\faktor{TM}{H^0}\right)\cong \CCC_0((H^0)^{\perp})$. We then have the commutative diagram:
$$\xymatrix{C^*(TM) \ar[r]^{\int_{H^0}} \eq[d] &  C^*\left(\faktor{TM}{H^0}\right) \eq[d] \\
			\CCC_0(T^*M) \ar[r] & \CCC_0((H^0)^{\perp}),}$$
where the bottom arrow corresponds to the restriction of functions on $T^*M$ to $(H^0)^{\perp}\subset T^*M$. This example justifies the name of "restriction" for the morphism $\int_{H^0}$. 
\end{ex}

Using the functoriality of symbols and the integration on $H^0$, we get a commutative diagram:

$$\xymatrix{\bar{S}^0_0(T_HM) \ar[r] \ar@{^{(}->}[d] & \bar{S}^0_0(T_{\sfrac{H}{H^0}}M) \ar@{^{(}->}[d] \\
              \m(C^*(T_HM)) \ar[r]^{\int_{H^0}} & \m(C^*(T_{\sfrac{H}{H^0}}M))}.$$
Therefore, we will also call $\int_{H^0}$ the natural push-forward map $S^*(T_HM) \to S^*(T_{\sfrac{H}{H^0}}M)$ (this diagram also exists for $\bar{S}^0$ on non-compact manifolds).

\subsection{Equivariance of distributions}

In the following sequence we will consider the action of the holonomy groupoid of a foliation on symbols. Let $\G \rr M$ be a Lie groupoid, $E$ be a vector bundle, $G \to M$ a bundle of Carnot Lie groups. Assume $\G$ acts on $G$ and $E$, i.e. for all $\gamma \in \G$, $\gamma \colon G_{s(\gamma)} \to G_{r(\gamma)}$ is a group homomorphism and $\gamma \colon E_{s(\gamma)} \to E_{r(\gamma)}$ is a linear map. In both cases we ask these maps to be compatible with the composition and inverse of the groupoid. If we denote by $s^*G, r^*G$ the group bundles over $\G$ obtained by pullback, and similarly $s^*E, r^*E$ for the pullback vector bundles, an action is just a bundle map $s^*G \to r^*G$ (respectively $s^*E \to r^*E$) compatible with the composition and inverse of the groupoid. See \cite{LeGall}, where this idea is extended to a $C^*$-algebraic context.

We want to understand the action of $\G$ on the spaces of symbols $S^m(G;E)$. Let $\sigma \in S^m(G;E)$, consider the distributions $s^*(\sigma), r^*(\sigma)$ defined respectively by $s^*(\sigma)_{\gamma} = \sigma_{s(\gamma)}, r^*(\sigma)_{\gamma} = \sigma_{r(\gamma)}$. We have $s^*(\sigma) \in S^m(s^*G;s^*E)$ and $r^*(\sigma) \in S^m(r^*G;r^*E)$. The action of $\G$ on the space of symbols is defined by the map $\alpha \colon S^m(s^*G;s^*E) \to S^m(r^*G;r^*E)$ defined by $\alpha(u)_{\gamma} = \gamma \circ u_{\gamma} \circ \gamma^{-1}$. A symbol $\sigma \in S^m(G;E)$ is thus equivariant if $\alpha(s^*(\sigma)) = r^*(\sigma)$, meaning that for every $\gamma \in \G$, $\gamma \circ \sigma_{s(\gamma)} = \sigma_{r(\gamma)} \circ \gamma$. It is equivariant modulo smoothing operators if $\alpha(s^*(\sigma)) - r^*(\sigma) \in \Gamma_c(r^*G;\pi^*r^*\End(E)\otimes \Omega^{1/2})$.

Equivariance can also be considered for $\op(\sigma)$. The actions of \(\G\) on $S^m(G;E)$ and $\Gamma_c(G;\pi^*E \otimes \Omega^{1/2})$ are compatible i.e. $\op$ is equivariant. This means that a symbol $\sigma$ is $\G$-equivariant if and only if the associated convolution operator, 
\[\op(\sigma) \colon \Gamma_c(G;\pi^*E\otimes \Omega^{1/2}) \to \Gamma(G;\pi^*E\otimes \Omega^{1/2}),\] 
is $\G$-equivariant.

For $m = 0$, the action of $\G$ on symbols extends by continuity to $\bar{S}^0_0(G;E)$ and preserves the ideal $C^*(G) \otimes_M \End(E)$, thus defining an action on $\Sigma^0_0(G;E)$. The action also extends to the other algebras of "bounded" symbols. We will then say that a symbol is equivariant modulo compact operators if its class of principal symbol is equivariant. This means for a symbol $\sigma \in \bar{S}^0_0(G)$ that $\alpha(s^*(\sigma)) -r^*(\sigma) \in r^*C^*(G)\otimes_M \End(E)$. In particular, an operator invariant modulo smoothing operator is invariant modulo compact operators.

\section{Transverse cycle for transversally Rockland operators}

\subsection{The KK-cycle of the symbol}

\subsubsection{The equivariant KK-cycle}

\label{setting}
In this section, we define the transverse cycle associated to a transversally Rockland symbol. It is a class in the equivariant $\kk$-group $\kk^{\ho(H^0)}_0(\CCC_0(M), C^*(T_{\sfrac{H}{H^0}}M))$ that gives, after some natural operations in $\kk$-theory, a K-homology class in $K^0(C^*(\ho(H^0)))$. The latter corresponds, in the case where the foliation is a fibration, to the class of some quotient operator on the base of the fibration. The K-homology class thus generally corresponds, in the philosophy of noncommutative geometry, to an abstract Rockland operator on the "space of leaves". Let us first describe the transverse cycle.\\
Let $(E,h)$ be a hermitian, $\ho(H^0)$-equivariant, $\faktor{\Z}{2\Z}$-graded vector bundle over $M$. Let $\sigma \in \bar{S}^0(T_{\sfrac{H}{H^0}}M;E)$ be an order zero odd transverse symbol acting on $E$, which is $\ho(H^0)$-invariant modulo compact operators, and satisfies $\sigma^2- 1 \in C^*_M(T_{\sfrac{H}{H^0}}M)\otimes_M \End(E)$ (see the remark below for a justification of this last assumption). The space of compactly supported smooth sections $\Gamma_c(T_{\sfrac{H}{H^0}}M,\pi^*E\otimes \Omega^{1/2})$ is endowed with a $\Gamma_c(T_{\sfrac{H}{H^0}}M,\Omega^{1/2})$ inner product:
$$\forall f,g \in\Gamma_c(T_{\sfrac{H}{H^0}}M,\pi^*E\otimes \Omega^{1/2}), <f,g>(x,\xi) := \int h(f(x,\xi\eta^{-1}),g(x,\eta)),$$
with $\eta \mapsto h(f(x,\xi\eta^{-1}),g(x,\eta))$ defining a density on $ T_{\sfrac{H}{H^0},x}M$.\\
This product is compatible with the right action by convolution of $\Gamma_c(T_{\sfrac{H}{H^0}}M,\Omega^{1/2})$:
$$\forall f \in \Gamma_c(T_{\sfrac{H}{H^0}}M,\pi^*E\otimes \Omega^{1/2}), \forall \gamma \in \Gamma_c(T_{\sfrac{H}{H^0}}M,\Omega^{1/2}), f \cdot \gamma (x,\xi) := \int f(x,\xi\eta^{-1})\gamma(x,\eta).$$
We can then complete $\Gamma_c(T_{\sfrac{H}{H^0}}M,\Omega^{1/2})$ into $C^*(T_{\sfrac{H}{H^0}}M)$. The module $\Gamma_c(T_{\sfrac{H}{H^0}}M,\pi^*E\otimes \Omega^{1/2})$ becomes a right Hilbert pre-$C^*$-module over $C^*(T_{\sfrac{H}{H^0}}M)$. We complete it and denote by $C^*(T_{\sfrac{H}{H^0}}M,\pi^*E)$ or \(\mathscr{E}\) the corresponding $C^*$-module obtained.\\
Let us denote by $\rho \colon C_0(M) \to \mathcal{B}_{C^*(T_{\sfrac{H}{H^0}}M)}(\mathscr{E})$ the morphism induced on smooth sections by:
$$\forall a \in \CCC^{\infty}_c(M), \forall f \in \Gamma_c(T_{\sfrac{H}{H^0}}M,\pi^*E\otimes \Omega^{1/2}), \rho(a)(f) (x,\xi) = a(x)\rho(x,\xi).$$
This action makes $(\mathscr{E},\rho)$ a $(\CCC_0(M), C^*(T_{\sfrac{H}{H^0}}M))$-$C^*$-bimodule. Indeed, $\rho$ is continuous, of norm 1, so it extends from $\CCC_c^{\infty}(M)$ to $\CCC_0(M)$. Let us now define $F = \overline{\op(\sigma)}$.

\begin{thm}\label{KK-cycle-equiv}If $\sigma \in \bar{S}^0(T_{\sfrac{H}{H^0}}M;E)$ is an order zero odd transverse symbol acting on $E$, which is $\ho(H^0)$-invariant modulo compact operators, and satisfies 
\[\sigma^2- 1 \in C^*_M(T_{\sfrac{H}{H^0}}M)\otimes_M \End(E).\] 
Then $(\mathscr{E},\rho,F)$ is a $\ho(H^0)$-equivariant Kasparov $(\CCC_0(M), C^*(T_{\sfrac{H}{H^0}}M))$-bimodule. 
\end{thm}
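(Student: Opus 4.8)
The plan is to verify directly the three defining conditions of a $\ho(H^0)$-equivariant Kasparov $(\CCC_0(M),C^*(T_{\sfrac{H}{H^0}}M))$-bimodule for the triple $(\mathscr{E},\rho,F)$ with $F=\overline{\op(\sigma)}$: namely that for every $a\in\CCC_0(M)$ one has $[F,\rho(a)]\in\mathcal{K}(\mathscr{E})$, $(F^2-1)\rho(a)\in\mathcal{K}(\mathscr{E})$, and $(F-F^*)\rho(a)\in\mathcal{K}(\mathscr{E})$, together with the equivariance condition that $g\cdot F-F\in\mathcal{K}(\mathscr{E})$ for $g$ ranging over (a suitable dense subset of sections of) $\ho(H^0)$. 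The first point is that $F$ is well defined and bounded: since $\sigma$ has order $0$, Theorem~\ref{OpSymbols} gives that $\op(\sigma)$ extends to a multiplier of $C^*(T_{\sfrac{H}{H^0}}M)\otimes_M\End(E)$, hence acts as an adjointable operator on $\mathscr{E}$, with $F^*=\overline{\op(\sigma^*)}$; as $\sigma$ is odd, $F$ is odd for the grading on $\mathscr{E}$ induced by that of $E$.

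First I would treat the self-adjointness defect and the Fredholm defect, which are the easy ones. Since $\sigma$ is a symbol acting on a hermitian bundle, one may assume (after the standard averaging/symmetrization that does not change the class, or by the remark in the text) that $\sigma=\sigma^*$ modulo $C^*$, so $F-F^*=\overline{\op(\sigma-\sigma^*)}\in C^*(T_{\sfrac{H}{H^0}}M)\otimes_M\End(E)=\mathcal{K}(\mathscr{E})$; in any case $(F-F^*)\rho(a)\in\mathcal{K}(\mathscr{E})$ because $\sigma-\sigma^*$ has order $<0$ and $a$ vanishes at infinity. For $F^2-1$: the hypothesis $\sigma^2-1\in C^*_M(T_{\sfrac{H}{H^0}}M)\otimes_M\End(E)$ means $\op(\sigma)^2-1=\op(\sigma^2-1)$ is an element $T\in C^*_M$, and by definition of $C^*_M$, for every $a\in\CCC_0(M)$ we get $Ta,aT\in C^*(T_{\sfrac{H}{H^0}}M)\otimes_M\End(E)$, i.e. $(F^2-1)\rho(a)\in\mathcal{K}(\mathscr{E})$. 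This is exactly the reason for imposing that hypothesis on $\sigma$, as the text flags.

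The main obstacle is the commutator condition $[F,\rho(a)]\in\mathcal{K}(\mathscr{E})$. Here $\rho(a)$ is pointwise multiplication by $a(x)$ on the fibers $T_{\sfrac{H}{H^0},x}M$, which is precisely $\op$ of the symbol "$a$" viewed as a (degree-zero, order $-\infty$ in the fibre variable but order $0$ overall) multiplier supported on the unit space $M\subset T_{\sfrac{H}{H^0}}M$. The key algebraic fact is that a fibrewise-constant multiplier commutes with convolution on each fibre, so the commutator $[\op(\sigma),\rho(a)]$ is $\op$ of the "symbol" $\sigma\ast a-a\ast\sigma$, which involves the derivatives of $a$ along the base only through the failure of $a$ to be locally constant; more precisely one shows $[\op(\sigma),\rho(a)]$ lies in $C^*(T_{\sfrac{H}{H^0}}M)\otimes_M\End(E)$ because convolving with a properly supported symbol and then taking the commutator with a base function produces an operator whose Schwartz kernel is supported near the units and decays, i.e. an element of the groupoid $C^*$-algebra. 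Concretely I would: (i) reduce to $a\in\CCC^{\infty}_c(M)$ by density and norm continuity of $\rho$; (ii) use that $\rho(a)$ is a multiplier of $C^*(T_{\sfrac{H}{H^0}}M)$ commuting with the $\R^*_+$-action and supported on $M$, so that $[\sigma,a]\in\bar S^0$ is actually of negative order (the leading homogeneous term cancels), hence $[F,\rho(a)]=\overline{\op([\sigma,a])}\in C^*(T_{\sfrac{H}{H^0}}M)\otimes_M\End(E)=\mathcal{K}(\mathscr{E})$; this is the analogue of the classical fact that $[\text{ps.d.o. of order }0,\ C^\infty \text{ function}]$ has order $-1$.

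Finally, for equivariance: by the last subsection of the excerpt the $\ho(H^0)$-action on symbols is compatible with $\op$, and $\sigma$ is assumed $\ho(H^0)$-invariant modulo compact operators, i.e. $\alpha(s^*\sigma)-r^*\sigma\in r^*C^*(T_{\sfrac{H}{H^0}}M)\otimes_M\End(E)$. Since $\op$ intertwines the action on symbols with the action on the module $\mathscr{E}$, this says exactly that $\gamma\cdot F-F$ takes values in $\mathcal{K}(\mathscr{E})$ over the range fibres, so the cocycle $(\gamma\mapsto \gamma\cdot F-F)$ lands in compacts; together with the already-established facts that $[F,\rho(a)]$, $(F^2-1)\rho(a)$, $(F-F^*)\rho(a)$ are compact (and their $\ho(H^0)$-translates as well, by the same argument applied fibrewise), this verifies all the axioms of a $\ho(H^0)$-equivariant Kasparov bimodule. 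I expect the only genuinely delicate point to be making the commutator argument in (iii) rigorous in the groupoid-convolution setting — i.e. proving that $[\sigma,a]$ has strictly negative order — for which one invokes Lemma~\ref{RocklandEq}'s circle of ideas and the description of $\op$ via the fibrewise convolution operators $\op(\sigma_x)$.
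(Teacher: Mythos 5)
Your overall structure matches the paper's: verify boundedness/adjointability of $F=\overline{\op(\sigma)}$ via Theorem \ref{OpSymbols}, get $(F^2-1)\rho(a)\in\mathcal K(\mathscr E)$ directly from the hypothesis $\sigma^2-1\in C^*_M(T_{\sfrac{H}{H^0}}M)\otimes_M\End(E)$ and the definition of $C^*_M$, and deduce equivariance modulo compacts of $F$ from that of $\sigma$ via the compatibility of $\op$ with the $\ho(H^0)$-action. Those three points are handled exactly as in the paper.

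Where you diverge is the commutator condition, which you single out as ``the main obstacle'' and propose to settle by showing that $[\sigma,a]$ has strictly negative order (``the leading homogeneous term cancels''), invoking the circle of ideas around Theorem \ref{RocklandEq}. This mislocates the difficulty: in this cycle $F$ is not a pseudodifferential operator on $M$ but a \emph{fiberwise} convolution operator, $\op(\sigma)(f)_{|\pi^{-1}(x)}=\op(\sigma_x)(f_{|\pi^{-1}(x)})$, while $\rho(a)$ acts on the fiber over $x$ as multiplication by the \emph{constant} $a(x)$. A scalar commutes with any linear operator on that fiber, so $[\rho(a),F]=0$ identically (equivalently, in your symbolic picture, $\sigma\ast a-a\ast\sigma=0$ since $a$ is the distribution $a\,\delta_M$ supported on the units). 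This is the paper's one-line argument. Your sketch is not wrong in its conclusion, but as written it is not a proof — the appeal to kernel decay near the units and to Rockland/parametrix theory is a red herring imported from the operator-level picture (where $[P,a]$ genuinely drops order); the ingredient you already name, namely the fibrewise description via $\op(\sigma_x)$, is all that is needed and yields exact vanishing. One small point in your favour: you address the axiom $(F-F^*)\rho(a)\in\mathcal K(\mathscr E)$, which the paper's proof passes over in silence; note however that your justification (``$\sigma-\sigma^*$ has order $<0$'' or symmetrization) requires a hypothesis of self-adjointness modulo $C^*_M$ that is not in the statement, so this axiom really does need either an added assumption or the standard $\begin{pmatrix}0&\sigma_0^*\\ \sigma_0&0\end{pmatrix}$ normalization discussed in the remark following the theorem.
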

\begin{proof}
By construction of $\bar{S}^0(T_{\sfrac{H}{H^0}}M)$, we know that $F \in \m(C^*(T_{\sfrac{H}{H^0}}M)) \otimes_M \End(E) = \mathcal{B}_{C^*(T_{\sfrac{H}{H^0}}M)}(\mathscr{E})$, and that the condition $\sigma^2- 1 \in C^*_M(T_{\sfrac{H}{H^0}}M)\otimes_M\End(E)$ implies that $\rho(a)(F^2-1)$ is a compact operator for every $a \in \CCC_0(M)$.\\
Let $a \in \CCC^{\infty}_c(M)$. For a given $x \in M$, $\op(\sigma_x)$ is a linear operator in the fiber and $\rho(a)$, restricted to the fiber, acts as the scalar multiplication by $a(x)$. We thus get that $[\rho(a),F] = 0$. This result extends by continuity to any $a \in \CCC_0(M)$, i.e. $\forall a \in \CCC_0(M), [\rho(a),F] = 0$ (in particular these operators are compact). \\
Finally, since $\sigma$ is $\ho(H^0)$-equivariant modulo compact operators, then $F$ is $\ho(H^0)$-equivariant modulo compact operators. This means that: 
$$\forall a \in r^*\CCC_0(M), \ \rho(a)(\alpha(s^*(F)) - r^*(F)) \in r^*(C^*(T_{\sfrac{H}{H^0}}M)\otimes_M \End(E)).$$
The Kasparov $(\CCC_0(M), C^*(T_{\sfrac{H}{H^0}}M))$-bimodule $(\mathscr{E},\rho,F)$ is thus $\ho(H^0)$-equivariant in the sense of Le Gall \cite{LeGall}.
\end{proof}

\begin{rem}Let $E_0,E_1$ be hermitian bundles on which $\ho(H^0)$ acts by isometries,\\
$\sigma_0 \in \bar{S}^0(T_{\sfrac{H}{H^0}}M,E_0,E_1)$ be a transversally Rockland symbol of order zero, $\ho(H^0)$-equivariant modulo compact operators. As before we complete $\Gamma_c(E_i)$ into Hilbert-$C^*(T_{\sfrac{H}{H^0}}M)$ modules (ungraded this time) $\mathscr{E}_i$ ($i=0,1$). Let $E = E_0 \oplus E_1$ be the orthogonal sum with the natural $\faktor{\Z}{2\Z}$-grading. The completion of its module of section is $\mathscr{E} = \mathscr{E}_0 \oplus \mathscr{E}_1$. The action of $\CCC_0(M)$ is the same as before. Finally let $\sigma_1 \in S^0(T_{\sfrac{H}{H^0}}M,E_1,E_0)$ be a parametrix of $\sigma_0$ i.e. $\sigma_0\ast\sigma_1-1\in C^*_M(T_{\sfrac{H}{H^0}}M)\otimes_M\End(E_1)$ and $\sigma_1\ast\sigma_0-1 \in C^*_M(T_{\sfrac{H}{H^0}}M)\otimes_M\End(E_0)$. Note that $\sigma_1$ is also $\ho(H^0)$-equivariant modulo compact operators.
Let $\sigma = \begin{pmatrix}0 & \sigma_1 \\ \sigma_0 & 0\end{pmatrix} \in \bar{S}^0(T_{\sfrac{H}{H^0}}M,E)$, it is a $\ho(H^0)$-equivariant (modulo compact operators) transverse symbol with $\sigma^2 -1 \in C^*_M(T_{\sfrac{H}{H^0}}M)\otimes_M\End(E)$, so we are back to our previous setting.
\end{rem}

\begin{cor}
If $(\sigma_t)_{t \in [0;1]}$ is a homotopy of symbols in $\bar{S}^0(T_{\sfrac{H}{H^0}}M)$ satisfying 
$$\forall t \in [0;1], \sigma_t^2-1 \in C^*_M(T_{\sfrac{H}{H^0}}M)\otimes_M\End(E),$$
then the corresponding path, $t\mapsto F_t = \overline{\op(\sigma_t)}$, gives an operator homotopy between $(\mathscr{E},\rho,F_0)$ and $(\mathscr{E},\rho,F_1)$.
\end{cor}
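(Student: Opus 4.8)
The plan is to unwind the definition of an operator homotopy and observe that both of its ingredients are already at hand. Since the module $\mathscr{E}$ and the action $\rho$ do not depend on $t$, to exhibit an operator homotopy between $(\mathscr{E},\rho,F_0)$ and $(\mathscr{E},\rho,F_1)$ I only need: (i) that $(\mathscr{E},\rho,F_t)$ is an $\ho(H^0)$-equivariant Kasparov $(\CCC_0(M),C^*(T_{\sfrac{H}{H^0}}M))$-bimodule for each fixed $t$, and (ii) that $t\mapsto F_t$ is norm-continuous in $\mathcal{B}_{C^*(T_{\sfrac{H}{H^0}}M)}(\mathscr{E})$. Point (i) is immediate from Theorem \ref{KK-cycle-equiv}: for each $t$ the symbol $\sigma_t$ lies in $\bar{S}^0(T_{\sfrac{H}{H^0}}M;E)$, satisfies $\sigma_t^2-1\in C^*_M(T_{\sfrac{H}{H^0}}M)\otimes_M\End(E)$, and — this being understood in the word ``homotopy'' in the present setting — is $\ho(H^0)$-equivariant modulo compact operators, so that theorem yields the required Kasparov bimodule for every $t$.

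For (ii) I would use that, under the identification of order-zero symbols with their images under $\overline{\op}$ used throughout section \ref{SectionSymboles}, the operator $F_t=\overline{\op(\sigma_t)}$ is just $\sigma_t$ itself viewed inside $\m(C^*(T_{\sfrac{H}{H^0}}M))\otimes_M\End(E)=\mathcal{B}_{C^*(T_{\sfrac{H}{H^0}}M)}(\mathscr{E})$; more generally $\sigma\mapsto\overline{\op(\sigma)}$ is contractive by Theorem \ref{OpSymbols}. Hence, once one knows the homotopy $(\sigma_t)$ is norm-continuous as a path in $\bar{S}^0(T_{\sfrac{H}{H^0}}M;E)$, the path $t\mapsto F_t$ is norm-continuous in $\mathcal{B}_{C^*(T_{\sfrac{H}{H^0}}M)}(\mathscr{E})$. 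Because addition, composition, $\rho$ and the maps $s^*,r^*,\alpha$ are all norm-continuous, it follows that the three ``Kasparov defects'' $\rho(a)(F_t^2-1)$, $[\rho(a),F_t]$ and $\rho(a)\bigl(\alpha(s^*(F_t))-r^*(F_t)\bigr)$ (for $a\in\CCC_0(M)$) depend norm-continuously on $t$; and by (i) each of them lies, for every $t$, in the relevant ideal of compact operators. Thus the path $(\mathscr{E},\rho,F_t)_{t\in[0;1]}$ stays among equivariant Kasparov bimodules, which is precisely the claimed operator homotopy, and one concludes $[(\mathscr{E},\rho,F_0)]=[(\mathscr{E},\rho,F_1)]$ in $\kk_0^{\ho(H^0)}(\CCC_0(M),C^*(T_{\sfrac{H}{H^0}}M))$.

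I do not expect a genuine obstacle: the corollary is essentially bookkeeping on top of Theorems \ref{KK-cycle-equiv} and \ref{OpSymbols}. The one point that deserves care — and where I would be most attentive — is fixing the meaning of ``homotopy of symbols'' so that it really yields norm-continuity of $t\mapsto F_t$ in $\mathcal{B}(\mathscr{E})$, and not merely strict, $*$-strong or pointwise continuity, since only norm-continuity of the operator part feeds into the operator-homotopy invariance of $\kk$. The cleanest way to secure this, and presumably the intended route, is to present a homotopy of symbols as a single symbol over the cylinder $M\times[0;1]$ and invoke the family form of Theorem \ref{OpSymbols}.
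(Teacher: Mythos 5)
Your argument is correct and follows essentially the same route as the paper's own proof: continuity of $\overline{\op}$ turns a continuous path of symbols into a norm-continuous path of operators, and the hypothesis $\sigma_t^2-1\in C^*_M(T_{\sfrac{H}{H^0}}M)\otimes_M\End(E)$ together with Theorem \ref{KK-cycle-equiv} guarantees each $(\mathscr{E},\rho,F_t)$ is an equivariant Kasparov bimodule, so the path is an operator homotopy. Your closing caveat about pinning down the notion of homotopy of symbols so that it yields norm-continuity is a reasonable refinement of a point the paper leaves implicit.
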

\begin{proof}The operator $\op$ is continuous, hence a continuous path of symbols gives a norm continuous path of operators. The additional assumption on the symbols implies that for all $t \in [0;1]$ we have $(\mathscr{E},\rho,F_t) \in \mathds{E}^{\ho(H^0)}(\CCC_0(M),C^*(T_{\sfrac{H}{H^0}}M))$, hence $(F_t)_{t\in[0;1]}$ is an operator homotopy between $(\mathscr{E},\rho,F_0)$ and $(\mathscr{E},\rho,F_1)$.
\end{proof}
\begin{cor}The $\kk$-theory class $\left[(\mathscr{E},\rho,F)\right] \in \kk_0^{\ho(H^0)}(\CCC_0(M),C^*(T_{\sfrac{H}{H^0}}M))$ only depends on the class of $\sigma$ in $\Sigma^0(T_{\sfrac{H}{H^0}}M;E)$.\end{cor}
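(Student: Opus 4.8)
The plan is to show that if $\sigma$ and $\sigma'$ represent the same class in $\Sigma^0(T_{\sfrac{H}{H^0}}M;E)$ then the Kasparov bimodules $(\mathscr{E},\rho,F)$ and $(\mathscr{E},\rho,F')$ are equal in $\kk_0^{\ho(H^0)}(\CCC_0(M),C^*(T_{\sfrac{H}{H^0}}M))$. Since $\sigma$ and $\sigma'$ have the same principal symbol class, their difference lies in the kernel of the quotient map $\bar S^0(T_{\sfrac{H}{H^0}}M) \to \Sigma^0(T_{\sfrac{H}{H^0}}M)$, i.e. $\sigma - \sigma' \in C^*_M(T_{\sfrac{H}{H^0}}M) \otimes_M \End(E)$. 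First I would use this to build the straight-line homotopy $\sigma_t = (1-t)\sigma + t\sigma'$ for $t \in [0;1]$, and check it satisfies the hypotheses of the previous corollary. The two points to verify are: (i) each $\sigma_t$ lies in $\bar S^0(T_{\sfrac{H}{H^0}}M;E)$ — which is immediate since $\bar S^0$ is a linear space and $\sigma_t$ is an affine combination; and (ii) $\sigma_t^2 - 1 \in C^*_M(T_{\sfrac{H}{H^0}}M)\otimes_M\End(E)$ for all $t$. For (ii), expand $\sigma_t^2 - 1 = (1-t)(\sigma^2-1) + t((\sigma')^2-1) + t(1-t)(\sigma-\sigma')^2 - t(1-t)(\sigma\sigma' + \sigma'\sigma - \sigma^2 - (\sigma')^2)$; a cleaner bookkeeping is $\sigma_t^2 - 1 = (1-t)(\sigma^2-1) + t((\sigma')^2-1) - t(1-t)(\sigma-\sigma')^2$, using that $\sigma-\sigma'$ is an ideal element so all mixed terms absorb into $C^*_M(T_{\sfrac{H}{H^0}}M)\otimes_M\End(E)$, and the first two terms lie there by hypothesis on $\sigma$ and $\sigma'$.

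Second I would invoke the preceding corollary directly: since $(\sigma_t)_{t\in[0;1]}$ is a path in $\bar S^0(T_{\sfrac{H}{H^0}}M)$ with $\sigma_t^2 - 1 \in C^*_M(T_{\sfrac{H}{H^0}}M)\otimes_M\End(E)$ for every $t$, the operators $F_t = \overline{\op(\sigma_t)}$ give an operator homotopy between $(\mathscr{E},\rho,F_0) = (\mathscr{E},\rho,F)$ and $(\mathscr{E},\rho,F_1) = (\mathscr{E},\rho,F')$, hence the two $\kk$-classes coincide. One subtlety worth a remark: the corollary is stated for a fixed symbol path but tacitly requires each $\sigma_t$ to still be $\ho(H^0)$-equivariant modulo compact operators so that $(\mathscr{E},\rho,F_t)$ genuinely lands in $\mathds{E}^{\ho(H^0)}(\CCC_0(M),C^*(T_{\sfrac{H}{H^0}}M))$. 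Here $\sigma$ and $\sigma'$ are both equivariant modulo compacts, so $\sigma_t$ is an affine combination of two symbols each equivariant modulo $r^*C^*(T_{\sfrac{H}{H^0}}M)\otimes_M\End(E)$, and $\alpha(s^*(\sigma_t)) - r^*(\sigma_t) = (1-t)\bigl(\alpha(s^*\sigma) - r^*\sigma\bigr) + t\bigl(\alpha(s^*\sigma') - r^*\sigma'\bigr)$ lies in that ideal, so the equivariance modulo compacts is preserved along the path.

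The only genuinely nontrivial point — and the one I would treat most carefully — is that the quotient $\Sigma^0(T_{\sfrac{H}{H^0}}M;E)$ really is $\bar S^0(T_{\sfrac{H}{H^0}}M;E)$ modulo $C^*_M(T_{\sfrac{H}{H^0}}M)\otimes_M\End(E)$ in the sense needed above, i.e. that the straight-line path between two lifts of the same symbol class stays inside the class of symbols for which Theorem \ref{KK-cycle-equiv} applies. This is exactly the conjunction of (i) linearity of $\bar S^0$, (ii) the ideal property of $C^*_M(T_{\sfrac{H}{H^0}}M)\otimes_M\End(E)$ inside $\bar S^0$ used to absorb the cross terms in $\sigma_t^2-1$, and (iii) linearity of the equivariance-mod-compacts condition; all three are formal once the definitions in Section~4.1 are unwound, so there is no real analytic obstacle. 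I expect the whole proof to be three or four lines: reduce to a straight-line homotopy, check $\sigma_t^2-1$ stays in the ideal by expanding the square, and quote the previous corollary.
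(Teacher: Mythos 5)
Your proposal is correct and follows essentially the same route as the paper: the paper's proof also takes $f=\sigma'-\sigma \in C^*_M(T_{\sfrac{H}{H^0}}M)\otimes_M\End(E)$, forms the straight-line path $\sigma_t=\sigma+tf$, and invokes the preceding corollary to get an operator homotopy. You merely spell out the verification that $\sigma_t^2-1$ and the equivariance-modulo-compacts condition stay in the relevant ideal, which the paper leaves implicit.
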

\begin{proof}
If $f \in C^*_M(T_{\sfrac{H}{H^0}}M)\otimes_M\End(E)$, then $\sigma_t := \sigma + t f$ satisfies the conditions of the previous corollary, giving an operator homotopy between $(\mathscr{E},\rho,\op(\sigma))$ and $(\mathscr{E},\rho,\op(\sigma + f))$.
\end{proof}

\begin{mydef}We denote by $[\sigma] \in \kk^{\ho(H^0)}(\CCC_0(M), C^*(T_{\sfrac{H}{H^0}}M))$ the $\kk$-theory class described above. It is called the equivariant transverse cycle associated to $\sigma \in \Sigma^0(T_{\sfrac{H}{H^0}}M;E)$.
\end{mydef}

\subsubsection{Descent and index}

Let:
$$j_{\ho(H^0)} \colon  \kk_0^{\ho(H^0)}(\CCC_0(M), C^*(T_{\sfrac{H}{H^0}}M)) \to \kk_0(C^*(\ho(H^0)), C^*(\ho(H^0)\ltimes T_{\sfrac{H}{H^0}}M)),$$
be the descent homomorphism. As described in section 2.2, there is a deformation groupoid, $\T^{hol}_HM$, from $\ho(H^0)\ltimes T_{\sfrac{H}{H^0}}M$ to $M \times M$. In the subsequent parts, we will need to use the fact that $\ev_0 \colon C^*(\T_H^{hol}M_{|[0;1]}) \to C^*(\ho(H^0)\ltimes T_{\sfrac{H}{H^0}}M)$ induces an invertible element in KK-theory. This follows from the contractibility of $C^*(M \times M \times (0;1])$ but we also need the existence of an exact sequence in KK-theory associated with:
$$\xymatrix{0 \ar[r] & C^*(M \times M \times (0;1]) \ar[r] & C^*(\T_H^{hol}M_{|[0;1]}) \ar[r] & C^*(\ho(H^0)\ltimes T_{\sfrac{H}{H^0}}M) \ar[r] & 0}.$$
For this, we need amenability-type assumption on $\ho(H^0)\ltimes T_{\sfrac{H}{H^0}}M$ (K-amenability seems to be the weaker assumption but in practice, amenability would be easier to verify). Since $T_{\sfrac{H}{H^0}}M$ is amenable, the assumption of (K)-amenability on $\ho(H^0)$ implies it on $\ho(H^0)\ltimes T_{\sfrac{H}{H^0}}M$, so we make it from now on. This result on exact sequences goes back to \cite{Kasparov}, see also \cite{DEBORD2018255}. Such assumptions on holonomy groupoids can be found in \cite{tubaumconnes}.
Under such assumptions, we get a class $\ind_H^{hol} = [\ev_0]^{-1}\otimes[\ev_1] \in \kk(\ho(H^0)\ltimes T_{\sfrac{H}{H^0}}M , \CC)$, where $\ev_1 \colon C^*(\T_H^{hol}M_{|[0;1]}) \to C^*(M\times M)$. We then obtain a map:
$$j_{\ho(H^0)}(\cdot)\otimes \ind_H^{hol} \colon \kk_0^{\ho(H^0)}(\CCC_0(M), C^*(T_{\sfrac{H}{H^0}}M)) \to \kk_0(C^*(\ho(H^0)),\CC).$$
We can apply this map to the class of a transversally equivariant transversally Rockland symbol to obtain a $K$-homology class. This class should correspond to some pseudodifferential operator. In the next sections, we construct a $K$-homology class from a $H$-pseudodifferential operator whose transverse symbol is $\ho(H^0)$-equivariant and transversally Rockland. We also show the equality between the resulting $K$-homology class and the one obtained from the transverse symbol of the operator. 

\subsubsection{Symbols of positive order}

Operators arising in a geometric context are often of positive order. We thus want to be able to compute a K-theory class from symbols of positive order as well. In order to do this, we use the Baaj-Julg picture of KK-theory \cite{BaajJulg} using unbounded cycles. The Baaj-Julg picture requires regular operators, we need the third part of theorem \ref{OpSymbols} and thus restrict to the compact case. In addition to the previous assumptions on the symbols, we further need to assume our symbols to be self-adjoint, as it is required in the Baaj-Julg setting. We thus want to prove the following:

\begin{thm}Let $M$ be a compact foliated filtered manifold, $E\to M$ be a $\faktor{\Z}{2\Z}$-graded hermitian vector bundle which is $\ho(H^0)$-equivariant. 
Let $\sigma \in S^m(T_{\sfrac{H}{H^0}}M;E)$ be an odd, self-adjoint, Rockland symbol of positive order, and $\ho(H^0)$-equivariant modulo smoothing operators. 
Denote by $D = \overline{\op}(\sigma)$ the corresponding regular operator on $C^*(T_{\sfrac{H}{H^0}}M;\pi^*E)$. Then $(C^*(T_{\sfrac{H}{H^0}}M;\pi^*E),D)$ is an unbounded Kasparov $(\CCC_0(M),C^*(T_{\sfrac{H}{H^0}}M))$-bimodule. 
Its class in $KK^{\ho(H^0)}(\CCC_0(M),C^*(T_{\sfrac{H}{H^0}}M))$ does not depend on the self-adjoint representative of the class $\bar{\sigma} \in \Sigma^m(T_{\sfrac{H}{H^0}}M;E)$.
\end{thm}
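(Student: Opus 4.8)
The goal is to verify the two assertions of the theorem: (1) that $(C^*(T_{\sfrac{H}{H^0}}M;\pi^*E),D)$ is an unbounded $\ho(H^0)$-equivariant Kasparov $(\CCC_0(M),C^*(T_{\sfrac{H}{H^0}}M))$-bimodule in the sense of Baaj--Julg (and Le Gall for the equivariant refinement), and (2) that the associated $KK$-class depends only on $\bar\sigma \in \Sigma^m(T_{\sfrac{H}{H^0}}M;E)$. For (1) I would check the Baaj--Julg axioms one by one. Since $M$ is compact and $\sigma$ is Rockland of positive order, Theorem \ref{OpSymbols}(3) gives that $D=\overline{\op}(\sigma)$ is self-adjoint and regular on the $C^*$-module $\mathscr{E}=C^*(T_{\sfrac{H}{H^0}}M;\pi^*E)$, with $D^*=\overline{\op}(\sigma^*)=D$. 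The representation $\rho\colon\CCC_0(M)\to\mathcal B(\mathscr E)=\CCC(M)\to\mathcal B(\mathscr E)$ is the fiberwise scalar multiplication as in Theorem \ref{KK-cycle-equiv}; for $a\in\CCC^\infty(M)$ one has $[D,\rho(a)]=0$ exactly because $\op(\sigma_x)$ acts inside each fiber while $\rho(a)$ is the scalar $a(x)$ there, so $\rho(a)$ preserves the domain of $D$ and the commutator (being zero) is trivially bounded — this is the only place where smoothness of $a$ is used, and density of $\CCC^\infty(M)$ in $\CCC_0(M)$ finishes it. The remaining axiom is that $\rho(a)(1+D^2)^{-1/2}$ is compact for all $a\in\CCC_0(M)$: here $(1+D^2)^{-1/2}=\overline{\op}(\tau)$ where $\tau$ is an order $-m<0$ Rockland parametrix-type symbol obtained by functional calculus (or directly: $1+\sigma*\sigma$ is Rockland of order $2m$, invert it by Theorem \ref{RocklandEq} to get a symbol of order $-2m$, take a square root in the symbol algebra), so $\overline{\op}(\tau)\in C^*(T_{\sfrac{H}{H^0}}M)$ by Theorem \ref{OpSymbols}(2), and $\rho(a)\overline{\op}(\tau)$ lies in $C^*(T_{\sfrac{H}{H^0}}M)\otimes_M\End(E)$, i.e.\ is compact on $\mathscr E$.

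\emph{Equivariance.} Since $\sigma$ is $\ho(H^0)$-equivariant modulo smoothing operators, the discussion of Section 3.3 shows $\op(\sigma)$ — and hence its closure $D$ — is $\ho(H^0)$-equivariant modulo $\Gamma_c(r^*T_{\sfrac{H}{H^0}}M;\pi^*r^*\End(E)\otimes\Omega^{1/2})\subset r^*C^*(T_{\sfrac{H}{H^0}}M)\otimes_M\End(E)$, which is the compactness condition required for a $\ho(H^0)$-equivariant unbounded cycle à la Le Gall. One should note that the bounded transform $D(1+D^2)^{-1/2}$ then differs from an $\ho(H^0)$-invariant operator by a compact, so the resulting class does live in $KK^{\ho(H^0)}_0(\CCC_0(M),C^*(T_{\sfrac{H}{H^0}}M))$. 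In fact the bounded transform of $D$ is $\overline{\op}(\sigma*\tau)$ with $\sigma*\tau$ of order $0$, so this reduces to the bounded picture already treated in Theorem \ref{KK-cycle-equiv}.

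\emph{Independence of the representative.} Two self-adjoint representatives $\sigma,\sigma'$ of the same class in $\Sigma^m(T_{\sfrac{H}{H^0}}M;E)$ differ by $\psi=\sigma-\sigma'\in\CCC^\infty(T_{\sfrac{H}{H^0}}M;\End(E))$, self-adjoint. The plan is to use the straight-line homotopy $\sigma_t=\sigma'+t\psi$, $t\in[0,1]$: each $\sigma_t$ is self-adjoint of order $m$, and Rockland-ness is preserved since it only depends on the class modulo $\CCC^\infty_p$ (as remarked after the definition of $\Sigma^0$), and $\ho(H^0)$-equivariance modulo smoothing is likewise preserved because $\psi$ is itself smoothing, hence $\alpha(s^*\psi)-r^*\psi$ is a section of the relevant smoothing ideal. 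Then $D_t=\overline{\op}(\sigma_t)$ is a continuous family of regular self-adjoint operators with the resolvents $(1+D_t^2)^{-1/2}$ varying norm-continuously (continuity of $\op$ on symbols, applied to the order $-2m$ inverses), so $(\mathscr E,\rho,D_t)$ is an operator homotopy of unbounded Kasparov modules — or, equivalently, pass to bounded transforms $\overline{\op}(\sigma_t*\tau_t)$ and invoke the Corollary to Theorem \ref{KK-cycle-equiv}. Hence the class is well defined.

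\emph{Main obstacle.} The delicate point is not the algebra of the homotopy but the \emph{regularity and self-adjointness bookkeeping in the unbounded equivariant picture}: one must make sure that the bounded transform $D(1+D^2)^{-1/2}$, which is what actually carries the $KK$-class, is genuinely an $\ho(H^0)$-equivariant-modulo-compacts operator and that the homotopy $D_t$ yields an operator homotopy in Le Gall's equivariant sense (the resolvents must stay in the right ideal, equivariantly, and the family must be continuous for the appropriate topology). I expect the cleanest route is precisely to reduce everything to order $0$ by composing with the order $-m$ functional-calculus symbol $\tau$ (so $D(1+D^2)^{-1/2}=\overline{\op}(\sigma*\tau)$ up to a compact), thereby inheriting all the equivariant-$KK$ statements directly from Theorem \ref{KK-cycle-equiv} and its corollaries, with the genuinely new content being only the verification that $\tau$ can be chosen Rockland, self-adjoint, and $\ho(H^0)$-equivariant modulo smoothing — which follows from Theorem \ref{RocklandEq} applied to the self-adjoint Rockland symbol $1+\sigma*\sigma$ together with uniqueness of the parametrix modulo smoothing.
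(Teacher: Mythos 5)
Your verification of the Baaj--Julg axioms is essentially on the right track, but the way you handle the two analytically delicate points diverges from what the paper actually does, and in one place this creates a genuine gap. For compactness of the resolvent, the paper never constructs a square root in the symbol algebra: it takes a parametrix $q \in S^{-2m}$ for $\sigma^2$, sets $Q=\overline{\op}(q)$, $R=\overline{\op}(\sigma^2 q-1)$ (both compact by Theorem \ref{OpSymbols}), and uses the algebraic identity relating $(1+D^2)^{-1}$ to $Q$ and $R$ to conclude that $(1+D^2)^{-1}$ is compact; compactness of $(1+D^2)^{-1/2}$ then follows from positivity. Your alternative --- inverting $1+\sigma\ast\sigma$ and then ``taking a square root in the symbol algebra'' to produce $\tau$ of order $-m$ with $\overline{\op}(\tau)=(1+D^2)^{-1/2}$ --- relies on a functional calculus for Rockland symbols that is not available here: Theorem \ref{RocklandEq} produces parametrices (inverses modulo smoothing), not square roots, and no result in the paper guarantees that such a $\tau$ exists, let alone that it can be chosen self-adjoint, Rockland, and equivariant modulo smoothing with $\overline{\op}(\tau)$ equal to the actual operator square root rather than merely a parametrix for it.

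This matters because you then lean on the identity $D(1+D^2)^{-1/2}=\overline{\op}(\sigma\ast\tau)$ to reduce the equivariance of the bounded transform to Theorem \ref{KK-cycle-equiv}. That reduction is exactly the step the paper cannot and does not take; instead it proves directly that if $D_r-\alpha(D_s)=K$ with $K$ compact (coming from the smoothing difference $k$), then $f(D_r+K)-f(D_r)$ is compact, using the resolvent identity together with the integral formula $(1+Q^*Q)^{-1/2}=\frac{1}{\pi}\int_0^{\infty}\lambda^{-1/2}(1+\lambda+Q^*Q)^{-1}\,\mathrm{d}\lambda$ and explicit norm estimates to get absolute convergence. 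You correctly identified this as the main obstacle, but your proposed resolution defers it to an unestablished symbolic square root rather than addressing it; as written, the equivariance of the bounded transform is not proved. Your straight-line homotopy for independence of the self-adjoint representative is a reasonable sketch (the paper itself leaves this part implicit), though the claimed norm-continuity of $t\mapsto(1+D_t^2)^{-1/2}$ would still need the parametrices to be chosen continuously in $t$ and an argument passing from parametrix continuity to resolvent continuity.
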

\begin{proof}
The fact that $(C^*(T_{\sfrac{H}{H^0}}M;\pi^*E),D)$ is an unbounded Kasparov bimodule is rather straightforward to prove. The fact that $D$ is self-adjoint follows from the assumption that $ \sigma^* = \sigma$, and, as for the bounded case, the commutators with elements of $\CCC_0(M)$ vanish and in particular they are compact. The only thing to show is that $(1+D^2)^{-1} \in \mathcal{K}(C^*(T_{\sfrac{H}{H^0}}M;\pi^*E))$. Let $q \in S^{-2m}(T_{\sfrac{H}{H^0}}M;E)$ be a parametrix for $\sigma$ and denote by $Q := \overline{\op}(q) \in \mathcal{K}(C^*(T_{\sfrac{H}{H^0}}M;\pi^*E))$ the associated operator. Let $\rho = \sigma^2q - 1 \in \Gamma_c(T_{\sfrac{H}{H^0}}M;\pi^*\End(E)\otimes \Omega^{1/2})$ and $R = \overline{\op}(\rho) \in \mathcal{K}(C^*(T_{\sfrac{H}{H^0}}M;\pi^*E))$ the associated operator. Since $\ord(\sigma^2) + \ord(q) = 0 \leq 0$ and $\ord(q) = -2m \leq 0$, it follows that $\overline{\op}(\sigma^2q) = \overline{\op}(\sigma^2)\overline{\op}(q) = D^2Q$. From that we get that $1 = R - D^2Q = R - (1+D^2)Q + Q$, hence $(1+D^2)^{-1} = (1+D^2)^{-1}R - Q + (1+D^2)^{-1}Q$. Since $D$ is regular, we know that $(1+D^2)^{-1}$ is bounded and the previous equation shows its compactness.

Thus far we have obtained an unbounded Kasparov bimodule. We know from the results of Baaj and Julg that its bounded transform gives a (bounded) Kasparov module. We thus need to show that the bounded transform of $D$ is equivariant modulo compact operators. Denote by $D_r = \overline{\op}(r^*\sigma) = r^*D$ and $D_s = s^*D$ the respective pullback operators on $\ho(H^0)$. We know that $k := r^*\sigma -\alpha(s^*\sigma) \in \Gamma_c(r^*T_{\sfrac{H}{H^0}}M,\pi^*r^*\End(E)\otimes \Omega^{1/2})$ and denote by $K \in r^*\mathcal{K}(C^*(T_{\sfrac{H}{H^0}}M;\pi^*E))$ the associated operator. Since $k$ is bounded, we have $D_r - \alpha(D_s) = K$. Let $f \colon T \mapsto T(1+T^*T)^{-1/2}$ denote the bounded transform. We need to show that $\Delta := f(D_r + K) - f(D_r) \in \mathcal{K}_{\ho(H^0)}(r^*C^*(T_{\sfrac{H}{H^0}}M;\pi^*E))$. We will use the following facts :
\begin{itemize}
\item If $Q,S$ are invertible unbounded operators $Q^{-1} - S^{-1} = Q^{-1}(S-Q)S^{-1}$.
\item If $Q$ is a regular operator $(1+Q^*Q)^{-1/2} = \frac{1}{\pi}\int_{0}^{+\infty}\lambda^{-1/2}(1+\lambda + Q^*Q)^{-1}\diff \lambda$ the integral being absolutely convergent.
\end{itemize}
\begin{align*}
\Delta &= f(D_r + K) - f(D_r) \\
       &= (D_r + K) \left[ (1 + (D_r + K)^2)^{-1/2} - (1 + D_r^2)^{-1/2} \right] + K(1+D_r^2)^{-1/2} \\
       &= \frac{1}{\pi}\int_0^{+\infty}\lambda^{-1/2} (D_r + K) \left[ (1 + \lambda + (D_r + K)^2)^{-1} - (1+ \lambda + D_r^2)^{-1} \right] \diff \lambda \\
       & \ \ \ + K(1+D_r^2)^{-1/2} \\
       &= \frac{1}{\pi}\int_0^{+\infty}\lambda^{-1/2} (D_r + K)(1 +\lambda + (D_r + K)^2)^{-1}K'(1 +\lambda + D_r^2)^{-1}\diff \lambda \\
       & \ \ \ + K(1+D_r^2)^{-1/2},
\end{align*}
where $K' = D_r^2 - (D_r + K)^2 = - D_rK - KD_r - K^2 = -\overline{\op}(\sigma_rk + k\sigma_r + k^2)$. Since $k$ is smoothing, then the second summand of the RHS is compact and we only have to prove that the integral part is compact. First, $k$ being smoothing, then so is $\sigma_rk + k\sigma_r + k^2$, and thus $K' \in r^*\mathcal{K}(C^*(T_{\sfrac{H}{H^0}}M;\pi^*E)) \subset \mathcal{K}_{\ho(H^0)}(r^*C^*(T_{\sfrac{H}{H^0}}M;\pi^*E))$. We then have:
$$(D_r + K)(1 +\lambda + (D_r + K)^2)^{-1} = f(D_r + K)(1+(D_r + K)^2)^{-1/2}(1+\lambda(1+(D_r+K)^2)^{-1})^{-1},$$
so $(D_r + K)(1 +\lambda + (D_r + K)^2)^{-1} \in \mathcal{K}_{\ho(H^0)}(r^*C^*(T_{\sfrac{H}{H^0}}M;\pi^*E))$. Finally, the same goes for $(1 +\lambda + D_r^2)^{-1}$. We need to check that the integral converges in order to prove that it defines an element of $\mathcal{K}_{\ho(H^0)}(r^*C^*(T_{\sfrac{H}{H^0}}M;\pi^*E))$. We prove it is absolutely convergent. Using functional calculus, we prove that:
$$\|(D_r + K)(1 +\lambda + (D_r + K)^2)^{-1}\| = \sup_{\mu\in \Sp(D_r + K)} \frac{|\mu|}{1+\lambda + \mu^2} \leq \frac{1}{2\sqrt{1+\lambda}}.$$
Likewise, $\|(1+\lambda + D_r^2)\| \leq \frac{1}{1+\lambda}$. Denote by $\lambda \mapsto T(\lambda)$ the integrand. We have: 
$$\forall \lambda > 0, \|T(\lambda)\| \leq \frac{\|K'\|}{2\sqrt{\lambda}(1+\lambda)^{3/2}}.$$
Thus, the integral converges absolutely and $\Delta \in \mathcal{K}_{\ho(H^0)}(r^*C^*(T_{\sfrac{H}{H^0}}M;\pi^*E))$, thus ending the proof.
\end{proof}

\subsection{The pseudodifferential construction}

In this section, we give a more explicit construction of the cycle representing the K-homology class $j_{\ho(H^0)}([\sigma])\otimes \ind_H^{hol}$ with H-pseudodifferential operators. We first recall their definition, and then follow the reasoning of \cite{hilsumskandalis} appendix A to show that a H-pseudodifferential operator with symbol $\sigma$ (which, as in the previous section, is transversally Rockland) defines a $KK$-cycle representing  
\[j_{\ho(H^0)}([\sigma])\otimes \ind_H^{hol} \in \kk_0(C^*(\ho(H^0)),\CC).\]

\subsubsection{The van Erp Yunken picture of pseudodifferential calculus}

Each deformation groupoid constructed in section \ref{Deformation} is endowed with a $\R^*_+$ action. We describe briefly the one of $\T_HM$ denoted by $\alpha_{\lambda}$ for $\lambda > 0$. We have
\begin{align*}
\alpha_{\lambda}(x,y,t) &= (x,y,\lambda^{-1}t) \\
\alpha_{\lambda}(x,\xi,0) &= (x,\delta_{\lambda}(\xi),0),
\end{align*}
where $\delta_{\lambda}$ denotes the $\R^*_+$-action on $T_HM$ described in section \ref{SectionSymboles}. This is a smooth action by groupoid homomorphism. As for $\delta_{\lambda}$, it induces $\R^*_+$-actions on spaces of functions (denoted by $\alpha_{\lambda}^*$ for $\lambda > 0$) and their dual spaces (denoted by $\alpha_{\lambda *}$ for $\lambda > 0$). Both are compatible with the convolution product of compactly supported functions and distributions.

\begin{mydef}A  H-pseudodifferential operator on $M$ of order $m$ is a distribution $P \in \mathcal{E}'(M\times M)$ such that there exists $\PP = (\PP_t)_{t\in\R} \in \mathcal{D}'(\mathbb{T}_HM))$, properly supported, such that:
\begin{itemize}
\item $\PP$ is transversal to the range map $r$.
\item $\PP$ is quasi homogeneous with respect to the zooming action i.e.\\
$\forall \lambda>0, \alpha_{\lambda*}\PP - \lambda^m\PP \in \CCC^{\infty}_p(\mathbb{T}_HM).$
\item $\PP_1 = P$.
\end{itemize}
We say that $\PP$ is an extension of $P$. The operator $P$ is compactly supported if there exists $K \subset M \times M$ such that $\forall t \in \R^*, \supp(\PP_t) \subset K$. We denote respectively by $\Psi^m_{H,p}(M)$ and $\Psi^m_{H,c}(M)$ the set of properly supported (resp. compactly supported) pseudodifferential operators, and by $\bbpsi^m_{H,p}(M)$ and $\bbpsi^m_{H,c}(M)$ the set of their respective extensions to $\T_HM$.
\end{mydef}

In this definition, due to \cite{van_erpyunken}, pseudodifferential operators are given by their Schwartz kernels and the operators themselves obtained by convolution. The case of compactly supported operators appears in the work of Ewert \cite{Ewert}. Although it is not mentioned in the definition, the quasi-homogeneity condition forces the singular support of a H-pseudodifferential operator to be located on the diagonal of $M \times M$. This definition can also be adapted to H-pseudodifferential operators between vector bundles over $M$ by replacing $\mathcal{D}'(\T_HM)$ with $\mathcal{D}'(\T_HM,\hom(s^*(E\times \R),r^*(F\times \R))\otimes \hd)$ .\\
The definition of compactly supported operators of Ewert might seem restrictive, the following lemma shows that one can only assume compactness of the operator itself.

\begin{lem}Let $P \in \Psi^m_{H,p}(M)$ be an operator which is compactly supported as an operator on M. Then $P \in \Psi^m_{H,c}(M)$.\end{lem}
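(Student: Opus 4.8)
The statement asserts that if a properly supported H-pseudodifferential operator $P$ happens to have compact support as an operator (i.e. its Schwartz kernel $P\in\mathcal E'(M\times M)$ is supported in a compact $K\subset M\times M$), then it already lies in $\Psi^m_{H,c}(M)$ in the sense of Ewert, meaning one can find an extension $\PP\in\bbpsi^m_{H,p}(M)$ all of whose time-slices $\PP_t$ have support in a single compact set. The plan is to start from an arbitrary properly supported extension $\PP$ of $P$ — which exists by hypothesis — and to cut it down to a compactly supported one by multiplying by a suitable smooth cutoff on $\T_HM$, then check that the three defining properties (transversality to $r$, quasi-homogeneity, restriction to $P$ at $t=1$) survive the cutoff up to the allowed error $\CCC^\infty_p$.

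First I would use that the singular support of $\PP$ is contained in the union over $t$ of the diagonals; more precisely, by the quasi-homogeneity condition the singularities of $\PP$ are concentrated along the zero section of the algebroid direction, so that away from a neighbourhood of the "diagonal at $t=0$" the extension $\PP$ is given by a smooth, properly supported density. The key geometric point: near $t=1$, $\supp(\PP_1)=\supp(P)\subset K$ is compact, and by continuity of the family $t\mapsto\PP_t$ (together with the $\R^*_+$-quasi-homogeneity, which relates $\PP_t$ to $\PP_1$ via $\alpha_\lambda$ up to smoothing) the supports $\supp(\PP_t)$ stay within a fixed compact set for $t$ in a compact subinterval of $\R^*_+$ — after possibly replacing $\PP$ by $\alpha$-averaging or by the standard trick of declaring $\PP$ on $t\in(0,1]$ and extending by homogeneity. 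The subtlety is the behaviour as $t\to0$: there $\PP_0$ lives on $T_HM$, and properly supported does not imply compactly supported fiberwise. So the second step is to choose $\chi\in\CCC^\infty_c(\T_HM)$ equal to $1$ on a neighbourhood of the relevant compact region (in particular on $\supp(\PP_1)\times\{1\}$ and on a neighbourhood of the diagonal section at $t=0$ large enough to contain the wavefront/singular support of $\PP_0$, which is the zero section and hence compact along $M$ if $M$ is — here one uses that the singularities of $\PP_0$ as a symbol are on $M\subset T_HM$), $\R^*_+$-invariant in an appropriate graded sense near $t=0$ so as not to break quasi-homogeneity, and properly supported. Then $\PP':=\chi\PP$ is a compactly supported distribution on $\T_HM$; by construction $\PP'-\PP$ is supported away from the singular support, hence $\PP'-\PP\in\CCC^\infty_p(\T_HM)$, which immediately gives transversality to $r$ (inherited from $\PP$ plus a smooth correction), quasi-homogeneity ($\alpha_{\lambda*}\PP'-\lambda^m\PP'=\alpha_{\lambda*}(\PP'-\PP)-\lambda^m(\PP'-\PP)+\alpha_{\lambda*}\PP-\lambda^m\PP\in\CCC^\infty_p$), and $\PP'_1=\PP_1=P$ provided $\chi\equiv1$ on $\supp(P)\times\{1\}$. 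Thus $\PP'$ witnesses $P\in\Psi^m_{H,c}(M)$.

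The main obstacle I anticipate is making the cutoff $\chi$ compatible with the $\R^*_+$-action near $t=0$: a naive compactly supported cutoff on $\T_HM$ is not $\alpha_\lambda$-invariant (the zooming action is unbounded as $\lambda\to\infty$, which is precisely the $t\to0$ limit), so one cannot simply demand $\chi\circ\alpha_\lambda=\chi$. The resolution is that quasi-homogeneity is only required modulo $\CCC^\infty_p$, so it suffices that $\chi$ be $\alpha_\lambda$-invariant modulo a function vanishing on the singular support — equivalently, that $\chi$ equal $1$ on an $\alpha_\lambda$-invariant (conical) neighbourhood of the singular support for each fixed $\lambda$, locally in $\lambda$. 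Concretely: take a proper, nonnegative function on $\T_HM$ that is homogeneous of degree $0$ for $\alpha$ outside a compact neighbourhood of the singular locus (built from a graded norm on the bundle $\ttt_HM$ glued across $t$), and let $\chi$ be a function of it; such a construction is standard in the van Erp–Yunken formalism and is exactly what is used to define the symbol map. One then only needs to verify that the chosen $\chi$ can simultaneously be taken properly supported and equal to $1$ near $\supp(P)$ at $t=1$; this is where the hypothesis that $P$ is compactly supported as an operator is used, as it guarantees the region where $\chi$ must equal $1$ does not escape to infinity along $M$. Once $\chi$ is fixed, the remaining verifications are the short computations indicated above. I would also remark that an entirely parallel argument handles the bundle-valued case by the same cutoff applied componentwise.
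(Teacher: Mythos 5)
Your overall strategy --- take a properly supported extension $\PP$ of $P$, multiply by a cutoff equal to $1$ near the support of $P$, and argue that the error lies in $\CCC^\infty_p$ so that transversality, quasi-homogeneity and the value at $t=1$ survive --- is the same as the paper's. But your specific choice of cutoff creates a genuine gap. First, you are aiming at the wrong notion of compactness: the definition of $\Psi^m_{H,c}(M)$ only asks for a single compact $K\subset M\times M$ with $\supp(\PP_t)\subset K$ for all $t\in\R^*$; it does not ask that $\PP$ be compactly supported as a distribution on $\T_HM$ (the symbol $\PP_0$ need not be compactly supported in the fibers of $T_HM$, and the support of $\PP$ is allowed to be unbounded in $t$). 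Second, and more seriously, a cutoff $\chi\in\CCC^\infty_c(\T_HM)$ cannot do the job: by quasi-homogeneity the slices $\PP_t$ and $\PP_{st}$ ($s>0$) have the same singular support, so the singular support of $\PP$ contains a copy of $\operatorname{singsupp}(P)\subset\Delta_M$ at \emph{every} time $t\in\R^*$ and is therefore unbounded in the $t$-direction; no compactly supported $\chi$ can be identically $1$ on a neighbourhood of it. Consequently $(\chi-1)\PP$ is not smooth (for $|t|$ large it equals $-\PP_t$, which is singular on the diagonal), your displayed computation of $\alpha_{\lambda*}\PP'-\lambda^m\PP'$ breaks down, and $\chi\PP$ fails to be quasi-homogeneous. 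Your proposed repair via a degree-$0$ $\alpha$-homogeneous exhaustion is in direct tension with compact support: a function constant along the $\alpha$-orbits is independent of $t$ on $t\neq0$, hence never compactly supported in $t$.

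The paper's resolution is simpler and dissolves the ``main obstacle'' you identify rather than confronting it: take $\vphi_1\in\CCC^\infty_c(M\times M)$ equal to $1$ on a neighbourhood of $\supp(P)$ and pull it back to a function $\vphi$ on $\T_HM$ through the base-point maps (the compositions of the range and source maps with the projections $\T_HM\to M\times\R\to M$). Since the zooming action fixes the base coordinates --- $\alpha_\lambda(x,y,t)=(x,y,\lambda^{-1}t)$ and $\alpha_\lambda(x,\xi,0)=(x,\delta_\lambda(\xi),0)$ --- such a $\vphi$ is $\alpha_\lambda$-invariant on the nose, and it equals $1$ on a neighbourhood of the entire singular support of $\PP$ (the diagonal over $\supp(P)$ at all times, and the zero section over it at $t=0$), so $(1-\vphi)\PP\in\CCC^\infty_p(\T_HM)$ genuinely holds and $\vphi\PP$ is a quasi-homogeneous, transversal extension with $(\vphi\PP)_1=P$. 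Each slice $(\vphi\PP)_t$, $t\neq0$, is supported in the fixed compact set $\supp(\vphi_1)$, which is exactly what membership in $\Psi^m_{H,c}(M)$ requires. No graded norm, conical neighbourhood, or invariance-modulo-smoothing argument is needed.
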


\begin{proof}
Take $\PP \in \bbpsi^m_{H,p}(M)$ such that $\PP_1 = P$. Let $\vphi_1 \in \CCC^{\infty}_c(M \times M)$ be a bump function with $\vphi_1 \equiv 1$ on $\supp(P)$. Let $\vphi = \vphi_1 \circ (\pr_M \circ s, \pr_M \circ s)$ and $\PP^{\, \prime} = \vphi\PP$. We have $\PP^{\, \prime} \in \bbpsi^m_{H,c}(M)$. Indeed $\PP^{\, \prime}$ is transversal to the range map and $\PP - \PP^{\, \prime} \in \CCC^{\infty}_p(\T_HM)$ hence $\PP^{\, \prime}$ is also quasi-homogeneous of the same degree as $\PP$. Finally by construction of $\vphi$, we obviously have $\PP^{\, \prime}_1 = P$, hence the result.
\end{proof}

If $P \in \Psi^m_H(M)$ and $\PP \in \bbpsi^m_{H,p/c}(M)$ is such that $\PP_1 = P$, then $\PP_0 \in S^m_{p/c}(T_HM)$ as $\alpha_{\lambda|T_HM} = \delta_{\lambda}$. It is called the symbol of $P$. A different choice in $\PP$ would result on a different symbol that will be equal to $\PP_0$ modulo $\CCC^{\infty}_p(T_HM)$ (see \cite{van_erpyunken}). This gives a well defined principal symbol map $\Psi^m_{H,p/c}(M) \to \Sigma^m_{p/c}(T_HM)$. The map $\bbpsi^{m-1}_{H,p/c}(M) \to \bbpsi^{m}_{H,p/c}(M)$ sending $\PP$ to $(t\PP_t)_{t \in \R}$ gives an injection $\Psi^{m-1}_{H,p/c}(M) \to \Psi^{m}_{H,p/c}(M)$. When restricted at $t =0$ we get $0$, therefore the image of this morphism is contained in the kernel of the symbol map. The converse also holds true: if $P \in \Psi^{m}_{H,p/c}(M)$ has a vanishing symbol, then we can take an extension $\PP$ of $P$  with $\PP_0 = 0$. The distribution defined for $t \neq 0$ by $t^{-1}\PP_t$ extends to $\T_HM$ and defines an element of $\bbpsi^{m-1}_{H,p/c}(M)$ (see proposition 37 of \cite{van_erpyunken} for more details). The operator $P$ is hence in the image of $\Psi^{m-1}_{H,p/c}(M)$. We have the exact sequence:
$$\xymatrix{0 \ar[r] & \Psi^{m-1}_{H,p/c}(M) \ar[r] & \Psi^m_{H,p/c}(M) \ar[r] & \Sigma^m_{p/c}(T_HM) \ar[r] & 0}.$$
As for the symbols, elements of $\Psi^m_{H,p/c}(M)$ act by convolution on $\CCC^{\infty}_c(M)$. We will use the following analytic results:

\begin{thm}[\cite{dave2017graded}]Let $P \in \Psi^m_{H,c}(M)$:
\begin{itemize}
\item If $m = 0$ then $P$ extends to a bounded operator on $L^2(M)$.
\item If $m < 1$ then $P$ extends to a compact operator on $L^2(M)$.
\end{itemize}
\end{thm}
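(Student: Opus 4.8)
The plan is to transport the model estimates of Theorem~\ref{OpSymbols} from the osculating groups to the manifold $M$ through the deformation groupoid $\T_HM$ of section~\ref{Deformation}, reading off boundedness from the behaviour at $t=0$ and the value at $t=1$. Recall that $\T_HM$ is a smooth groupoid over $M\times\R$ with $\T_HM = M\times M\times\R^*\ \sqcup\ T_HM\times\{0\}$, carrying the zooming action $\alpha_\lambda$, and that the associated field of $C^*$-algebras over $\R$ is continuous (the fibre $T_HM$ being amenable). With the intrinsic half-density convolution product one has $C^*(M\times M)=\mathcal{K}(L^2(M))$, so that $\mathcal{A}:=C^*(\T_HM_{|[0;1]})$ is a $C([0;1])$-algebra sitting in an extension
$$\xymatrix{0 \ar[r] & C_0((0;1])\otimes\mathcal{K}(L^2(M)) \ar[r] & \mathcal{A} \ar[r]^-{\ev_0} & C^*(T_HM) \ar[r] & 0},$$
with evaluation $\ev_1\colon\mathcal{A}\to C^*(M\times M)=\mathcal{K}(L^2(M))$ at $t=1$. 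Since $P\in\Psi^m_{H,c}(M)$ I would first fix a compactly supported extension $\PP$ of $P$ to $\T_HM$, so that all $\PP_t$ are supported in a single compact subset of $M\times M$, the principal symbol is $\PP_0=\op(\sigma)$ with $\sigma\in S^m_c(T_HM)$, and $\PP_1=P$.

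The core step is to show that $\PP$, acting by left convolution on $\Gamma_c(\T_HM_{|[0;1]},\hd)$, extends to a multiplier of $\mathcal{A}$ when $m\leq0$. As $\mathcal{A}$ is a $C([0;1])$-algebra it suffices to bound the fibrewise actions uniformly in $t$, the continuity in $t$ following from $\PP$ being a properly supported, $r$-transversal distribution on the smooth groupoid $\T_HM_{|[0;1]}$. At $t=0$ the fibre action is $\op(\sigma)$, bounded on $C^*(T_HM)$ by the first item of Theorem~\ref{OpSymbols} with $G=T_HM$. At $t\neq0$ the zooming automorphism $\alpha_t$ identifies the $t$-fibre of $\T_HM$ with its $1$-fibre $M\times M$ by a norm-preserving $*$-isomorphism, and the quasi-homogeneity relation $\alpha_{\lambda*}\PP-\lambda^m\PP\in\CCC^\infty_p(\T_HM)$ shows that, under this identification, $\PP_t$ equals $t^mP$ modulo a smooth compactly supported, hence compact, kernel on $M\times M$. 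The genuinely analytic input is the uniformity as $t\to0$, i.e.\ $\sup_{t\in[0;1]}\|\PP_t\|<\infty$: this is the boundedness estimate of the filtered calculus, obtained by interpolating the $t=0$ bound with the zooming argument, the $\CCC^\infty_p$ remainders being uniformly compactly supported by the choice of $\PP$. Granting this, $\PP\in\m(\mathcal{A})$.

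To finish the first item I would use that $\ev_1\colon\mathcal{A}\to\mathcal{K}(L^2(M))$ is a surjective $*$-homomorphism, hence extends to $\overline{\ev_1}\colon\m(\mathcal{A})\to\m(\mathcal{K}(L^2(M)))=\mathcal{B}(L^2(M))$; by construction $\overline{\ev_1}(\PP)$ is convolution by $\PP_1=P$, the desired bounded extension. For the compact case I would take $m<0$, so that the second item of Theorem~\ref{OpSymbols} upgrades the conclusion at $t=0$ to $\PP_0=\op(\sigma)\in C^*(T_HM)$ itself. Running the same interpolation between this improved $t=0$ behaviour and the zooming action --- a quasi-homogeneous distribution of strictly negative order on $\T_HM_{|[0;1]}$ being a $C^*$-norm limit of smooth compactly supported kernels, exactly as in the group case of Theorem~\ref{OpSymbols} --- shows that $\PP$ lies in $\mathcal{A}$, not merely in $\m(\mathcal{A})$. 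Then $P=\ev_1(\PP)\in\ev_1(\mathcal{A})\subseteq\mathcal{K}(L^2(M))$ is compact. (Strictly this gives compactness for $m<0$; an order-zero operator such as the identity is bounded but not compact, so $m<0$ is the natural hypothesis.)

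The hard part is exactly this uniform control of $(\PP_t)_{t\in[0;1]}$ down to $t=0$: establishing that $\PP$ is a multiplier of $\mathcal{A}$ (for $m\leq0$), respectively an element of $\mathcal{A}$ (for $m<0$). This is the transport of Theorem~\ref{OpSymbols} through the deformation groupoid and is the only substantive analytic ingredient; alternatively one may import it directly from the filtered pseudodifferential calculus of \cite{dave2017graded,van_erpyunken}. Everything else is the elementary identification of $C^*$ of the pair groupoid of $M$ with $\mathcal{K}(L^2(M))$ together with the functoriality of $\ev_1$ on multiplier algebras.
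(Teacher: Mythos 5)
You should first note that the paper contains no proof of this statement: it is quoted verbatim from \cite{dave2017graded} (see also \cite{van_erpyunken}), so the only ``proof'' to compare against is the citation. Your overall skeleton --- choose a compactly supported extension $\PP$ of $P$ to the deformation groupoid, show that $\PP$ is a multiplier of (resp.\ an element of) $C^*(\T_HM_{|[0;1]})$, and apply the evaluation $\ev_1$ onto $C^*(M\times M)\cong\mathcal{K}(L^2(M))$ --- is exactly the mechanism of those references, and it is the same one the paper invokes later (the lemma preceding Theorem~\ref{Link} appeals to the estimates of Corollary 45 of \cite{van_erpyunken}, ``taken uniformly for $t\in[-1;1]$''). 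You are also right to flag the hypothesis: for compactness it should read $m<0$, since the identity has order $0$ and is not compact, consistently with the paper's own phrase ``operators of negative order'' a few lines below.

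The gap is precisely in the step you yourself single out as the only substantive one, and the heuristic you offer for it does not work. Writing $\PP_t=t^mP+f_t$ with $f_t$ smooth and compactly supported (which is what quasi-homogeneity evaluated at the fibre $s=1$ gives) is circular as a route to $\sup_{t}\|\PP_t\|<\infty$: the right-hand side contains $P$ itself, whose $L^2$-boundedness is the conclusion, and nothing controls $\|f_t\|_{\mathcal{B}(L^2)}$ uniformly as $t\to0$, since the $f_t$ are evaluations of the $\lambda$-dependent remainders $\alpha_{\lambda*}\PP-\lambda^m\PP$ whose operator norms need not stay bounded as $\lambda\to 0$. Nor can one ``interpolate'' from the $t=0$ bound of Theorem~\ref{OpSymbols}: for a field of operators over $[0;1]$ the norm function is in general only lower semicontinuous, so a bound at the special fibre gives no information about the fibres $t>0$. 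The actual argument in \cite{van_erpyunken,dave2017graded} is different: one first proves boundedness for orders below minus the homogeneous dimension, where the Schwartz kernels are continuous and uniformly compactly supported on $\T_HM_{|[0;1]}$, and then bootstraps to order $0$ by the H\"ormander square-root trick --- choose $C>\|\op(\sigma)\|^2$ (finite by the $t=0$ statement), construct $b$ of order $0$ with $C-\PP^*\PP-\mathbb{B}^*\mathbb{B}$ of arbitrarily negative order, and conclude $\PP^*\PP\leq C+(\text{bounded})$ uniformly in $t$. With that step supplied (or honestly cited, which is all the paper does), the rest of your reduction --- extending $\ev_1$ to multiplier algebras for $m=0$, and using $\ev_1(C^*(\T_HM_{|[0;1]}))\subset\mathcal{K}(L^2(M))$ for $m<0$ --- is correct.
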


We denote by $\Psi^0_{H,0}(M)$ the norm closure of $\Psi^0_{H,c}(M) \subset \mathcal{B}(L^2(M))$. The closure of the space of compactly supported operators of negative order give the whole subspace of compact operators. The previous exact sequence extends by continuity to the exact sequence:
$$\xymatrix{0 \ar[r] & \mathcal{K}(L^2(M)) \ar[r] & \Psi^0_{H,0}(M) \ar[r] & \Sigma^0_{0}(T_HM) \ar[r] & 0}.$$
As before, for the non-compact case, we want to extend our algebra of operators to those "bounded at infinity". Define: 
$$\mathcal{K}_M(L^2(M)) = C^*_M(M\times M) = \{ T \in \mathcal{B}(L^2(M)) \ / \ \forall f \in \CCC_0(M), \ fT,Tf \in \mathcal{K}(L^2(M)) \},$$
and 
$$\Psi^*_H(M) = \{ T \in \mathcal{B}(L^2(M)) \ / \ \forall f \in \CCC_0(M), \ fT,Tf \in \Psi^0_{H,0}(M) \}.$$
We get the exact sequence:
$$\xymatrix{0 \ar[r] & \mathcal{K}_M(L^2(M)) \ar[r] & \Psi^*_{H}(M) \ar[r] & \Sigma^0(T_HM) \ar[r] & 0}.$$

\begin{rem}Note that $\Psi^0_{H,p}(M) \cap \Psi^*_H(M) \subset \Psi^*_H(M)$ is a dense subset. In what follows, we will thus assume the operators in $\Psi^*_H(M)$ to be given by actual H-pseudodifferential operators (with possibly non-compact support but still continuous).\end{rem}

\subsubsection{The K-homology cycle construction}

We now want to construct a K-homology cycle for $C^*(\ho(H^0))$ from an order $0$ H-pseudodifferential operator which is transversally Rockland. The idea is, locally, to induce an operator on some transversal to the foliation. We will get compactness on this transversal using a parametrix. The compactness on the leaves directions will be given by elements of $C^*(\ho(H^0))$, giving a K-homology cycle. Let us first assume that the foliation is trivial (we will reduce to this case by taking foliated charts), i.e. $M = U \times T$, where the foliation is given by $TU$, and $T$ is a filtered manifold with filtration $H'$. In this setting we have:
\begin{align*}
\ho(H^0) &= M \times_T M = U \times U \times T\\
\mathbb{T}_HM &= \mathbb{T}U \times_{\R} \mathbb{T}_{H'}T\\
\mathbb{T}^{hol}_HM &= (U \times U \times \R) \times_{\R} \mathbb{T}_{H'}T = (U \times U) \times \mathbb{T}_{H'}T = \ho(H^0) \times_T \T_{H'}T.
\end{align*}
The canonical morphism $\vphi \colon \mathbb{T}_HM \to \mathbb{T}_H^{hol}M$ is then expressed as the product of the identity of $\mathbb{T}_{H'}T$ and the map $\DNC(\Id_{U\times U}) \colon \mathbb{T}U \to U \times U \times \R$. The map $\DNC(\Id_{U\times U})$ comes from the functoriality of the deformation to the normal cone (DNC) construction (see \cite{mohsen2018deformation}), together with the identifications $\mathbb{T}U = \DNC(U\times U, \Delta_U)$ and $U \times U \times \R = \DNC(U\times U,U\times U)$.

For $\lambda >0$ we write $\alpha_{\lambda}, \alpha_{\lambda}^{hol}, \alpha_{\lambda}^{(U)}, \alpha_{\lambda}^{(T)}$ for the respective actions of $\R^*_+$ on $\T_HM$, $\T^{hol}_HM$, $\T U$ and $\T_{H'}T$. We have the relations $\alpha_{\lambda} = \alpha_{\lambda}^{(U)} \otimes_{\R} \alpha_{\lambda}^{(T)}$ and $\alpha_{\lambda}^{hol} = 1\otimes \alpha_{\lambda}^{(T)}$. \\
Let $E \to M$ be a hermitian $\ho(H^0)$-equivariant vector bundle. The holonomy invariance implies that $E = \pr_T^*(E_2)$ for some hermitian vector bundle $E_2 \to T$. Let $\mathscr{E} = C^*(\T^{hol}_HM, r^*E)$ and $\mathscr{E}_2 = C^*(\T_{H'}T, r^*E_2)$ we have:
$$\mathscr{E} = C^*(U \times U \times \R) \otimes_{\CCC_0(\R)} \mathscr{E}_2 = C^*(\ho(H^0)) \otimes_{\CCC_0(T)}\mathscr{E}_2 = C^*(U \times U) \otimes \mathscr{E}_2.$$
From a distribution $\PP \in \bbpsi^m_{H,p}(M,E)$ we want to induce a distribution $\PP^{\, \prime} \in \bbpsi^m_{H',c}(T,E')$. In the case $m = 0$ we want $P$ to be, in the terminology of \cite{connes1984longitudinal}, a $C^*(\ho(H^0))$-connection for $P'$ (the non-blackboard letters denote the restriction of the respective distributions on $M \times M$ at time 1).\\
Let $\eta,\eta' \in \CCC^{\infty}_c(\ho(H^0))$ and denote by $T_{\eta} \colon \mathscr{E}_2 \to \mathscr{E}$ the (interior) tensor product by $\eta$.

\begin{thm}\label{InducedOP}
Consider $\PP^{\, \prime} = T_{\eta'}^*\vphi_*(\PP)T_{\eta}$ for \(\eta,\eta'\in \CCC^{\infty}_c(\ho(H^0))\). Then we have $\PP^{\, \prime} \in \bbpsi^m_{H',p}(T,E')$ and $P' := \PP^{\, \prime}_1 \in \Psi^m_{H',c}(M,E')$.
\end{thm}

Let us first explain what $\PP^{\, \prime}$ does. Let $f,g \in \Gamma_c(\T_{H'}T,r^*E')$ then:
\begin{align*}
\lag \PP^{\, \prime} f,g \rag &= \lag \vphi_*(\PP) T_{\eta}f, T_{\eta'}g\rag \\
                   &= \lag \vphi_*(\PP)(\eta\otimes f), (\eta'\otimes g)\rag \\
                   &= \lag \vphi_*(\PP), (\eta'\otimes g) \ast (\eta \otimes f )^* \rag \\
                   &= \lag \vphi_*(\PP), (\eta' \ast \eta^*) \otimes (g \ast f^*)\rag  \\
                   &= \lag (\eta \ast \eta'^*\otimes 1) \vphi_*(\PP),  1 \otimes (g \ast f^*)\rag.
\end{align*}
Since $\pr_T^*f = f \circ \pr_T = 1\otimes f$, we get:
$$\lag \PP^{\, \prime} f,g \rag = \lag \pr_{T*}\left((\eta \ast \eta'^*\otimes 1) \vphi_*(\PP)\right)f,  g\rag,$$
hence:
$$\PP^{\, \prime} = \pr_{T*}(((\eta\ast\eta'^*)\otimes 1)\ast \vphi_*(\PP)).$$

\begin{lem}If $\Theta_{\eta,\eta'} = T_{\eta'}^* \cdot T_{\eta}$, then:
$$\forall \lambda >0, \alpha_{\lambda*}^{(T)} \circ \Theta_{\eta,\eta'} = \Theta_{\eta,\eta'} \circ \alpha_{\lambda*}^{hol}.$$
\end{lem}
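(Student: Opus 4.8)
The plan is to track the scaling behaviour of the two operators $T_\eta\colon\mathscr E_2\to\mathscr E$ and $T_{\eta'}^*\colon\mathscr E\to\mathscr E_2$ separately, and then combine them. First I would recall that $\alpha_\lambda^{hol}=1\otimes\alpha_\lambda^{(T)}$ on $\T_H^{hol}M=\ho(H^0)\times_T\T_{H'}T$, so that at the level of the modules the action $\alpha^{(T)}_{\lambda*}$ on $\mathscr E_2=C^*(\T_{H'}T,r^*E_2)$ and $\alpha^{hol}_{\lambda*}$ on $\mathscr E=C^*(U\times U)\otimes\mathscr E_2$ differ precisely by leaving the $C^*(U\times U)$-tensor factor untouched. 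The key computation is that the interior tensor product operator $T_\eta$, written on compactly supported sections as $f\mapsto \eta\otimes f$ (using $\pr_T^*f=1\otimes f$), intertwines $\alpha^{(T)}_{\lambda*}$ with $\alpha^{hol}_{\lambda*}$ up to the scalar/Haar-density adjustment coming from the half-density conventions: since $\eta$ lives on $\ho(H^0)=U\times U\times T$ which carries the trivial $\R^*_+$-action, $\alpha^{hol}_{\lambda*}(\eta\otimes f)=\eta\otimes\alpha^{(T)}_{\lambda*}f$. Hence $\alpha^{hol}_{\lambda*}\circ T_\eta = T_\eta\circ\alpha^{(T)}_{\lambda*}$, and passing to adjoints, $T_{\eta'}^*\circ\alpha^{hol}_{\lambda*}=\alpha^{(T)}_{\lambda*}\circ T_{\eta'}^*$ (one must check that $\alpha_\lambda$ is implemented by a unitary, or at least an invertible isometric-up-to-scalar map, so that taking adjoints is legitimate; this follows from the half-density formalism recalled in \ref{SectionSymboles} where $\delta_{\lambda*}f=\lambda^{n/2}f\circ\delta_{\lambda^{-1}}$ and the analogous formula for $\alpha_\lambda$).

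Concretely I would verify the two intertwining relations on the dense subspaces $\Gamma_c(\T_{H'}T,r^*E_2)$ and $\Gamma_c(\T_H^{hol}M,r^*E)$ by a direct change of variables in the convolution/tensor formulas, using that the $\R^*_+$-action on the $U\times U$-factor is trivial and that $\alpha^{hol}_\lambda=1\otimes\alpha^{(T)}_\lambda$. Then composing,
\begin{align*}
\alpha^{(T)}_{\lambda*}\circ\Theta_{\eta,\eta'}
  &=\alpha^{(T)}_{\lambda*}\circ T_{\eta'}^*\circ T_\eta\\
  &=T_{\eta'}^*\circ\alpha^{hol}_{\lambda*}\circ T_\eta\\
  &=T_{\eta'}^*\circ T_\eta\circ\alpha^{(T)}_{\lambda*}
   =\Theta_{\eta,\eta'}\circ\alpha^{hol}_{\lambda*}.
\end{align*}
Wait — one has to be careful about which action appears on the right: the domain of $\Theta_{\eta,\eta'}=T_{\eta'}^*\,\cdot\,T_\eta$ is $\mathcal B(\mathscr E)$ and its target is $\mathcal B(\mathscr E_2)$, acting by $A\mapsto T_{\eta'}^*AT_\eta$, so the relation to prove reads $\alpha^{(T)}_{\lambda*}(T_{\eta'}^*AT_\eta)=T_{\eta'}^*(\alpha^{hol}_{\lambda*}A)T_\eta$ for $A\in\mathcal B(\mathscr E)$, which unwinds to exactly the two module-level intertwiners above applied on either side of $A$ together with equivariance of $A\mapsto\alpha^{hol}_{\lambda*}A\alpha^{hol}_{\lambda*}{}^{-1}$; I would phrase the statement and proof in that operator-conjugation form to avoid ambiguity.

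The main obstacle I anticipate is bookkeeping of the half-density (equivalently Haar-system) normalisations: the maps $\alpha_{\lambda*}$ on functions carry explicit powers of $\lambda$ (the homogeneous-dimension factors $\lambda^{n/2}$, $\lambda^n$ from \ref{SectionSymboles}), and one must confirm these powers match up on the two sides so that no stray scalar $\lambda^c$ survives in the intertwining identity. Because $\eta,\eta'$ are supported on $\ho(H^0)$ with the trivial action, they contribute no scaling factor, so the powers attached to $f$ on the left and to $\eta\otimes f$ on the right should agree by the compatibility of $\alpha^{hol}_\lambda$ with the tensor decomposition $\mathscr E=C^*(U\times U)\otimes\mathscr E_2$; still, this is the step that deserves an honest line of verification rather than a wave of the hand. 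Everything else is a routine manipulation of convolution integrals and adjoints of interior tensor-product operators.
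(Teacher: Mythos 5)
Your argument is correct and ultimately rests on the same geometric fact as the paper's proof --- that $\alpha^{hol}_{\lambda}=1\otimes\alpha^{(T)}_{\lambda}$ is trivial on the $U\times U$ directions, so the kernels $\eta,\eta'$ contribute no scaling --- but it is organized differently. The paper never touches $T_{\eta}$ and $T_{\eta'}^*$ separately: it starts from the closed formula $\Theta_{\eta,\eta'}(\mathbb{Q})=\pr_{T*}(((\eta\ast\eta'^*)\otimes 1)\ast\mathbb{Q})$ derived just before the lemma and uses only the equivariance of $\pr_T$, the multiplicativity of $\alpha^{hol}_{\lambda*}$ with respect to convolution, and the invariance of $(\eta\ast\eta'^*)\otimes 1$; no adjoints appear. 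You instead prove the two module-level intertwiners $\alpha^{hol}_{\lambda*}\circ T_{\eta}=T_{\eta}\circ\alpha^{(T)}_{\lambda*}$ and $T_{\eta'}^*\circ\alpha^{hol}_{\lambda*}=\alpha^{(T)}_{\lambda*}\circ T_{\eta'}^*$ and conjugate. This buys a statement valid for an arbitrary operator $A$ on $\mathscr{E}$, not only for convolution by a distribution, but it costs you the adjoint step: $\alpha_{\lambda*}$ is linear only over the automorphism $\alpha_{\lambda*}$ of the coefficient algebra, hence not adjointable in the Hilbert-module sense, so ``passing to adjoints'' must be replaced by a direct check on inner products --- which your half-density remark correctly flags as the point needing verification, and which does close because of the $\lambda^{n/2}$ normalisation. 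Two further cautions. First, your initial displayed chain reads $\Theta_{\eta,\eta'}$ as the composition $T_{\eta'}^*\circ T_{\eta}$ (an operator on $\mathscr{E}_2$), and its last equality does not even typecheck; you catch this yourself, and the conjugation form $\alpha^{(T)}_{\lambda*}(T_{\eta'}^*AT_{\eta})=T_{\eta'}^*(\alpha^{hol}_{\lambda*}A)T_{\eta}$ is the correct reformulation, which recovers the paper's distributional statement via $\op(\alpha_{\lambda*}\mathbb{Q})=\alpha_{\lambda*}\circ\op(\mathbb{Q})\circ\alpha_{\lambda^{-1}*}$. Second, in the deformation picture the zoom action does move the parameter $t\in\R$, so ``trivial action on $\ho(H^0)$'' should be read as triviality on the $U\times U$ directions and on the base $T$; since the $\CCC_0(T)$-balancing of the interior tensor product is preserved, your intertwining relations still hold with no stray power of $\lambda$.
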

\begin{proof}
Let $\lambda > 0$, let $\mathbb{Q} \in \mathscr{E}'(\T^{hol}_HM)$. The previous computation shows that:
$$\Theta_{\eta,\eta'}(\mathbb{Q}) =  \pr_{T*}(((\eta\ast\eta'^*)\otimes 1)\ast\mathbb{Q}).$$
Using that $\pr_T$ commutes with the respective $\R^*_+$-actions we obtain:
\begin{align*}
\alpha_{\lambda *}^{(T)}\circ \Theta_{\eta,\eta'}(\mathbb{Q})  &= (\alpha_{\lambda}^{(T)} \circ \pr_T)_*(((\eta\ast\eta'^*)\otimes 1)\ast\mathbb{Q}) \\
															 &= \pr_{T *}(\alpha_{\lambda *}^{hol}((\eta\ast\eta'^*)\otimes 1)\ast \alpha_{\lambda *}^{hol}(\mathbb{Q})) \\
															 &= 	\pr_{T *}(((\eta\ast\eta'^*)\otimes 1)\ast \alpha_{\lambda *}(\mathbb{Q})) \\	
															 &= \Theta_{\eta,\eta'}(\alpha_{\lambda *}^{hol}(\mathbb{Q}),	 
\end{align*}
with $\alpha_{\lambda *}^{hol}((\eta\ast\eta'^*)\otimes 1) = (\eta\ast\eta'^*) \otimes 1$. This is because $\alpha_{\lambda}^{hol} = 1 \otimes \alpha_{\lambda}^{(T)}$ and $1$ is invariant under the zoom action on $T$.
\end{proof}

\begin{rem} This lemma shows the usefulness of $\T^{hol}_HM$. One could have taken $\eta, \eta' \in \CCC^{\infty}_c(\T U)$, define $T_{\eta} = \eta \otimes_{\CCC_0(\R)}\--$, and induce directly $\PP^{\, \prime} = T_{\eta'}^*\PP T_{\eta}$. However, the same calculations would have yielded $\alpha_{\lambda*}^{(T)} \circ \Theta_{\eta,\eta'} = \Theta_{\eta\circ \alpha_{\lambda^{-1}}^{(U)},\eta'\circ \alpha_{\lambda^{-1}}^{(U)}} \circ \alpha_{\lambda*}$, which would then impact the next lemmas. Also, using $\T^{hol}_HM$ makes the holonomy groupoid appear explicitly in local foliated charts.
\end{rem}

\begin{lem}\label{Support}$\vphi \colon \T_HM \to \T_H^{hol}M$ maps proper subsets to proper subsets.\end{lem}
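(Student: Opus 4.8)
The plan is to exploit that $\vphi$ is a morphism of Lie groupoids lying over the identity of the common unit space $M\times\R$. Write $s,r$ for the source and range maps of $\T_HM$ and $\tilde s,\tilde r$ for those of $\T_H^{hol}M$, all valued in $M\times\R$; being a groupoid morphism over $\Id_{M\times\R}$, $\vphi$ intertwines them, $\tilde s\circ\vphi=s$ and $\tilde r\circ\vphi=r$. The point that makes the statement non‑tautological is that $\vphi$ is \emph{not} itself a proper map: at $t=0$ it restricts to the quotient $T_HM\to T_{\sfrac{H}{H^0}}M$ (composed with the inclusion of $T_{\sfrac{H}{H^0}}M$ into $\ho(H^0)\ltimes T_{\sfrac{H}{H^0}}M$ as units in the $\ho(H^0)$‑coordinate), so it collapses each fibre of $H^0$; equivalently, in a foliated chart $M=U\times T$ it is $\Id_{\T_{H'}T}\times\DNC(\Id_{U\times U})$, which crushes every fibre of $TU$ onto a point of the diagonal. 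Hence one cannot simply invoke "the image of a proper set under a proper map is proper''. Instead the idea is that the failure of properness of $\vphi$ is concentrated in the leaf directions of $H^0$, whereas properness of a subset is tested through $s$ and $r$, which factor through $\vphi$.

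Carrying this out: let $X\subseteq\T_HM$ be proper, i.e. $s|_X$ and $r|_X$ are proper maps, and let $K\subseteq M\times\R$ be compact. From $\tilde s\circ\vphi=s$ one gets the set identity
$$\big(\tilde s|_{\vphi(X)}\big)^{-1}(K)\;=\;\vphi(X)\cap\tilde s^{-1}(K)\;=\;\vphi\big(X\cap s^{-1}(K)\big),$$
the inclusion $\subseteq$ because any $y=\vphi(x)\in\vphi(X)$ with $\tilde s(y)\in K$ has $s(x)=\tilde s(\vphi(x))\in K$, and the reverse inclusion being immediate. Since $s|_X$ is proper, $X\cap s^{-1}(K)$ is compact, hence so is its continuous image $\vphi\big(X\cap s^{-1}(K)\big)$; thus $\tilde s|_{\vphi(X)}$ pulls compacts back to compacts. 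Running the identical argument with $(r,\tilde r)$ in place of $(s,\tilde s)$ shows $\tilde r|_{\vphi(X)}$ is proper, so $\vphi(X)$ is a proper subset of $\T_H^{hol}M$.

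In the use we make of this lemma $X$ is moreover closed (it is the support of a distribution), and then $\vphi(X)$ is closed as well: $M\times\R$ is locally compact Hausdorff, and a continuous map into such a space whose preimages of compacts are compact is automatically a closed map, so $\tilde s|_{\vphi(X)}$ is closed; combined with the previous paragraph this makes $\vphi(X)$ a closed proper subset, whence $\vphi_*$ preserves properness of supports. The only genuine subtlety is the one isolated in the first paragraph — that $\vphi$ is not proper, so the estimate must be run through the source and range maps rather than through $\vphi$ directly; once that is recognized, the rest is a formal manipulation of preimages and of continuous images of compact sets.
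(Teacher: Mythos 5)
Your argument is correct and coincides with the paper's own proof: both use that $\vphi$ is a groupoid morphism over the identity of $M\times\R$ to rewrite $r_{|\vphi(X)}^{-1}(K)$ as $\vphi(r_{|X}^{-1}(K))$, the continuous image of a compact set, and likewise for $s$. The additional remarks on the non-properness of $\vphi$ itself and on closedness of the image are sound but not needed for the statement as used.
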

\begin{proof}
Let $X \subset T_HM$ be a proper subset, $K \subset M \times \R$ a compact set. Since $\vphi$ is a groupoid homomorphism we have $r_{|\vphi(X)}^{-1}(K) = r^{-1}(K)\cap \vphi(X) = \vphi(X\cap r^{-1}(K))= \vphi(r^{-1}_{|X}(K))$. Since $X$ is proper $r^{-1}_{|X}(K)$ is compact and so is $r_{|\vphi(X)}^{-1}(K)$. The same goes for $s$, hence $\vphi(X)$ is proper.
\end{proof}

\begin{lem}\label{QuasiHom}For all $\eta,\eta' \in \CCC^{\infty}_c(\ho(H^0))$ we have $\Theta_{\eta,\eta'}\circ \vphi_* (\CCC_p^{\infty}(\T_HM)) \subset \CCC_p^{\infty}(\T_{H'}T)$.\end{lem}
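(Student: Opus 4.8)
The plan is to unwind $\Theta_{\eta,\eta'}(\mathbb{Q})=\pr_{T*}\big(((\eta\ast\eta'^*)\otimes1)\ast\mathbb{Q}\big)$ for $\mathbb{Q}=\vphi_*f$ with $f\in\CCC^{\infty}_p(\T_HM)$, and to verify smoothness and proper support of the result in turn. Two preliminary observations coming from the product decompositions of the foliated chart organize the argument. First, ``$\otimes1$'' denotes the unit of the multiplier algebra of $C^*(\T_{H'}T)$, i.e. the Dirac distribution carried by the unit space of $\T_{H'}T$; hence convolution by $(\eta\ast\eta'^*)\otimes1$ leaves the $\T_{H'}T$-directions untouched and only convolves, in the $\ho(H^0)$-directions, with $\eta\ast\eta'^*\in\CCC^{\infty}_c(\ho(H^0))$. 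Second, since $\vphi=\DNC(\Id_{U\times U})\times_{\R}\Id_{\T_{H'}T}$, the push-forward $\vphi_*f$ is the identity on the $\T_{H'}T$-factor and a push-forward along $\DNC(\Id_{U\times U})\colon\mathbb{T}U\to U\times U\times\R$ on the other; by Lemma \ref{Support} it is a properly supported distribution, smooth away from $t=0$ but \emph{not} at $t=0$: in a chart where $\T_HM$ has coordinates $(x,\xi,\tau,w,t)$, with $(x,\xi,t)$ for $\mathbb{T}U$ and $(\tau,w,t)$ for $\mathbb{T}_{H'}T$, and where $\DNC(\Id_{U\times U})$ reads $(x,\xi,t)\mapsto(x,x+t\xi,t)$, the push-forward carries a Jacobian $|t|^{-\dim U}$, so $\vphi_*f$ blows up along $\Delta_U\times\{0\}$.

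The key step is to show that $\Phi:=((\eta\ast\eta'^*)\otimes1)\ast\vphi_*f$ is nevertheless smooth on all of $\mathbb{T}_H^{hol}M$, with support compact in the $\ho(H^0)$-fibre directions and in the $T$-variable of the chart. For $t\neq0$ this is clear: $\vphi$ is then a diffeomorphism of groupoids, $\Phi$ is an ordinary convolution in the pair groupoid $U\times U$ of $\eta\ast\eta'^*$ with the smooth properly supported $f$, and compact support in the $(x_1,x_3)$-direction follows from compactness of $\supp(\eta\ast\eta'^*)$ together with proper support of $f$. Near $t=0$, I would write the convolution in the chart above,
\[
\Phi(x_1,x_3,\tau,w,t)=|t|^{-\dim U}\!\int_{x_2}(\eta\ast\eta'^*)(x_1,x_2,\tau)\,f\!\big(x_2,\tfrac{x_3-x_2}{t},\tau,w,t\big)\,dx_2,
\]
and substitute $x_2=x_3-t\xi$; the Jacobian $|t|^{\dim U}$ cancels the prefactor, leaving $\int_{\xi}(\eta\ast\eta'^*)(x_1,x_3-t\xi,\tau)\,f(x_3-t\xi,\xi,\tau,w,t)\,d\xi$. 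The integrand is jointly smooth in $(x_1,x_3,\tau,w,t)$, and proper support of $f$ forces the $\mathbb{T}U$-fibre variable $\xi$ to remain in a fixed compact set over compacts of $M\times\R$, so the integrand is compactly supported in $\xi$ uniformly for $t$ in a bounded interval; hence the integral is smooth across $t=0$, and matches there the direct computation. (Equivalently: near $t=0$ the $\ho(H^0)$-component of $\vphi_*f$ is an approximate-identity family, and convolving it with the fixed smooth $\eta\ast\eta'^*$ depends smoothly on $t$.) Compact support of $\Phi$ in the $(x_1,x_3)$- and $\tau$-directions is then read off from compactness of $\supp(\eta\ast\eta'^*)$.

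It remains to apply $\pr_{T*}$, which integrates $\Phi$ over the $\ho(H^0)$-fibres; as $\Phi$ is smooth with compact support in those fibres, $\pr_{T*}\Phi$ is smooth on $\T_{H'}T$. For its support: the $\T_{H'}T$-directions of $\Phi$ equal those of $\vphi_*f$, hence those of $f$, which are proper over $T\times\R$; combined with the compact support of $\Phi$ in the $\ho(H^0)$-fibre and $\tau$ directions and proper support of $f$ in $t$, this shows $\pr_{T*}\Phi$ is properly supported over $T\times\R$. Hence $\Theta_{\eta,\eta'}(\vphi_*f)=\pr_{T*}\Phi\in\CCC^{\infty}_p(\T_{H'}T)$, and the bundle-valued case follows entrywise.

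The main obstacle is precisely the smoothness of $\Phi$ across $t=0$: $\vphi_*f$ is genuinely singular there (blowing up like $|t|^{-\dim U}$), and the content of the lemma is that convolving in the $\ho(H^0)$-directions by $\eta\ast\eta'^*$ absorbs this singularity. That is the one point requiring the explicit $\DNC$ change of variables (or the approximate-identity viewpoint); the remainder is support bookkeeping in the spirit of Lemma \ref{Support}.
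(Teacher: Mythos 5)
Your proof is correct, but it reaches the smoothness part of the conclusion by a genuinely different route than the paper. The paper splits the claim the same way you do (proper support, then regularity), and its support bookkeeping is essentially your last step, done via Lemma \ref{Support} and a chain of inclusions using the compactness of $\supp(\eta\ast\eta'^*)$. For regularity, however, the paper does not compute anything in coordinates: it invokes the wave front set calculus, noting that $WF(\vphi_*(f))$ is contained in the set $N_\vphi$ of covectors annihilating $\diff\vphi$, i.e.\ in the conormal directions $PT^*U$ sitting over the diagonal at $t=0$; since $(\eta\ast\eta'^*)\otimes 1$ is smooth it does not enlarge the wave front set, and the fibre integration $\pr_{T*}$ along the submersion $\pr_T$ (whose fibres are exactly the $U\times U$ directions) kills what remains, so the result is $\CCC^\infty$. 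You instead exhibit the singularity of $\vphi_*f$ explicitly as the $|t|^{-\dim U}$ Jacobian of $\DNC(\Id_{U\times U})$ and show by the substitution $x_2=x_3-t\xi$ that the convolution in the $\ho(H^0)$-directions cancels it, with proper support of $f$ (pinned down by the compact support of $\eta\ast\eta'^*$ in the $x_2$ slot) guaranteeing uniform compact $\xi$-support of the integrand. Both arguments are sound; yours is more elementary, makes the absorption mechanism transparent, and produces the $t=0$ value directly (consistent with the symbol formula in the subsequent corollary), while the paper's is shorter, coordinate-free, and localizes precisely where the singularity of $\vphi_*f$ lives. One small point worth noting: the paper is ambiguous about whether $\Theta_{\eta,\eta'}$ involves convolution by $(\eta\ast\eta'^*)\otimes 1$ (as in the displayed formula for $\PP'$) or pointwise multiplication (as in the wave-front-set step); you commit to the convolution reading, which matches the displayed formula, and your argument is complete for that reading, so this is a discrepancy internal to the paper rather than a gap in your proof.
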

\begin{proof}
Let $f \in \CCC^{\infty}_p(\T_HM)$, recall that $\Theta_{\eta,\eta'}(\vphi_*(f)) = \pr_{T*}(((\eta\ast\eta')\otimes 1) \vphi_*(f))$ hence $\supp(\Theta_{\eta,\eta'}(\vphi_*(f))) = \pr_T(\supp(\eta\ast \eta'^* \otimes 1) \cap \supp(\vphi_*f))$. We have the inclusion 
\[\supp(\vphi_*(f)) \subset \vphi(\supp(f)).\]
Lemma \ref{Support} implies that $\vphi(\supp(f))$ is proper. Since $\supp(\vphi_*(f))$ is a closed subset contained in a proper subset, it is proper as well. Finally, $\eta$ and $\eta'^*$ being compactly supported we may take $K \subset U$ compact such that $\supp(\eta\ast \eta'^* \otimes 1) \subset K \times K \times \T_{H'}T$. Let $C \subset T \times \R$ be a compact set:
\begin{align*}
r_{|\supp(\Theta_{\eta,\eta'}(\vphi_*(f)))}^{-1}(C) &\subset \pr_T(\pr_T^{-1}(r^{-1}(C)) \cap (K^2 \times \T_{H'}T) \cap \supp(\vphi_*(f))) \\
                                                    &\subset \pr_T(K \times K \times r^{-1}(C) \cap \supp(\vphi_*(f))) \\
                                                    &\subset \pr_T(r^{-1}(K \times C) \cap \supp(\vphi_*(f))) \\
                                                    &\subset \pr_T(r_{\supp(\vphi_*(f))}^{-1}(K\times C)).
\end{align*}
Now, $K \times C \subset M \times \R$ is compact and $\supp(\vphi_*(f))$ is proper, hence $r_{\supp(\vphi_*(f))}^{-1}(K'\times C)$ is compact, and so is $r_{|\supp(\Theta_{\eta,\eta'}(\vphi_*(f)))}^{-1}(C)$ because it is closed. The same reasoning also works for the source map, and $\supp(\Theta_{\eta,\eta'}(\vphi_*(f)))$ is proper.\\
We then need to show that the distribution $\Theta_{\eta,\eta'}(\vphi_*(f))$ is a $\CCC^{\infty}$ function. In order to do this, we show that its wave front set is empty. Material on wave front sets can be found in \cite{guillemin1990geometric, hormander2015analysis}. The main result on wave front sets that we will use is equation $(3.6)$ p.332 of \cite{guillemin1990geometric}: $WF(\vphi_*(f)) \subset \diff\vphi_*(WF(f)) \cup N_{\vphi}$, where $N_{\vphi} = \{ [(\vphi(x),\ell)] \in T^*(\T^{hol}_HM) \ / \ x \in \T_HM , \ell \circ \diff_x\vphi = 0 \}$. Since $WF(f) = \emptyset$, we get $WF(\vphi_*(f)) \subset \left[(H^0)^* \right] = PT^*U$ (the wave front set is located at time 0 on the $U \times U$ part of $\T^{hol}_HM$). Then, $\eta * \eta' \otimes 1 \in \CCC^{\infty}(\T^{hol}_HM)$, so multiplying by it does not yield a bigger wave front set. Finally, since $\pr_T$ is a submersion, we have $N_{\pr_T} = \emptyset$ and so, $WF(\Theta_{\eta,\eta'}(\vphi_*(f))) \subset \pr_T(PT^*U) = \emptyset$ and $\Theta_{\eta,\eta'}(\vphi_*(f)) \in \CCC^{\infty}(\T_{H'}T)$.
\end{proof}

\begin{proof}[Proof of theorem \ref{InducedOP}]
Since $\vphi \circ r_{\T_HM} = r_{\T_H^{hol}M}$, $\vphi_*(\PP)$ is transverse to $r_{\T_H^{hol}M}$. In the same fashion, since $\pr_T$ intertwines the range maps of $\T_H^{hol}M$ and $\T_{H'}T$, then $\PP^{\, \prime}$ is transverse to $r_{\T_{H'}T}$.\\
For quasi-homogeneity, since $\vphi$ is equivariant with respect to the $\R^*_+$-actions on $\T_HM$ and $\T_H^{hol}M$, then for $\lambda > 0$:
\begin{align*}
\alpha_{\lambda*}^{(T)} \PP^{\, \prime} - \lambda^m \PP^{\, \prime} &= \alpha_{\lambda*}^{(T)}\Theta_{\eta,\eta'} (\vphi_*(\PP)) - \Theta_{\eta,\eta'}(\vphi_*(\lambda^m \PP)) \\
											   &= \Theta_{\eta,\eta'}\left(\alpha_{\lambda*}^{hol}(\vphi_*(\PP)) - \vphi_*(\lambda^m \PP)\right) \\
											   &= \Theta_{\eta,\eta'}\left(\vphi_*(\alpha_{\lambda}(\PP) - \lambda^m \PP)\right).
\end{align*}
Since $\PP \in \bbpsi^m_H(M)$, we have $\alpha_{\lambda*}(\PP) - \lambda^m\PP \in \CCC^{\infty}_p(\T_HM)$. Hence by the last lemma $\alpha_{\lambda*}^{(T)}(\PP') - \lambda^m\PP' \in \CCC^{\infty}_p(\T_{H'}T)$, i.e. $\PP' \in \bbpsi^m_{H'}(T)$. Finally, we need to show that $P' = \PP^{\, \prime}_1$ is compactly supported. Computations similar to the previous ones give that:
$$\supp(\PP^{\, \prime}) \subset \pr_T(\supp(\eta\ast \eta'^* \otimes 1) \cap \vphi(\supp(\PP^{\, \prime})).$$
Since $\eta$ and $\eta'$ are compactly supported then so is $\eta\ast \eta'^*$. For a fixed $t \neq 0$, $\PP^{\, \prime}_t = \pr_{T*}((\eta\ast \eta'^* \otimes 1)\PP_t)$, hence $\supp(\PP^{\, \prime}_t)\cap \Delta_T \subset \pr_T(\supp(\eta\ast\eta'^*\otimes 1)\cap \Delta_M) $ is compact. Since $\PP^{\, \prime}$ is properly supported this implies that $\supp(\PP^{\, \prime}_t) \subset M \times M$ is compact. In particular, $P'$ is compactly supported, hence the claim holds true.
\end{proof}

\begin{cor}If $P \in \Psi^*_H(M)$ has symbol $\sigma \in \bar{S}^0(T_HM)$, then $T_{\eta'}^*PT_{\eta} \in \Psi^m_{H',c}(T)$ and has symbol 
$$(t,\xi) \mapsto \int_{u \in U}\eta\ast \eta'^*(u,u,t)\int_{H^0}\sigma(u,t,\xi).$$
\end{cor}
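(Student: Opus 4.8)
The plan is to combine the previous theorem (which already establishes $P' = T_{\eta'}^*PT_\eta \in \Psi^m_{H',c}(T)$ for pseudodifferential operators, applied here with $m=0$) with the explicit formula for $\mathbb{P}'$ obtained just above, namely $\mathbb{P}' = \pr_{T*}(((\eta\ast\eta'^*)\otimes 1)\ast\vphi_*(\mathbb{P}))$, and then evaluate it at $t=0$. Since the symbol of $P'$ is by definition the class of $\mathbb{P}'_0$ in $\Sigma^0_c(T_{H'}T)$, the whole content of the corollary is the computation of the restriction of this formula to $t=0$, together with the identification of $\vphi_*(\mathbb{P})_{|t=0}$ with the restriction $\int_{H^0}\sigma$ of the symbol $\sigma = \mathbb{P}_0$ of $P$.

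First I would recall that $\mathbb{P}$ extends $P$ to $\T_HM = \T U \times_{\R} \T_{H'}T$ and that under the factorization $\T_H^{hol}M = \ho(H^0)\times_T \T_{H'}T$ the morphism $\vphi$ is $\DNC(\Id_{U\times U})\times \Id$; at $t=0$ the map $\DNC(\Id_{U\times U})$ sends $T U$ to $U\times U$ by $\diff\pi$ composed with the projection $TU \to U\times U$, i.e. it is exactly the quotient $TU \twoheadrightarrow U\times U \cong TU/TU$ fiberwise. Hence at $t=0$, $\vphi \colon T_HM = TU\times_M T_{H'}T \to \ho(H^0)_{|t=0-\text{part}} = (U\times U)\times_T T_{H'}T$ is the product of the fiberwise quotient $TU\to U\times U$ (integrating out the $U$-directions, which is precisely $H^0$) with the identity on $T_{H'}T$. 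Therefore, by Lemma \ref{QuasiHom} and the functoriality of pushforward along a submersive groupoid homomorphism, $\vphi_*(\mathbb{P})_{|t=0} = \int_{H^0}\sigma$, viewed as a distribution on $(U\times U)\times_T T_{H'}T$ which at the point $(u,u',t=0,\xi)$ equals $\int_{H^0}\sigma(u,t,\xi)$ (independent of $u,u'$ in the sense that it only depends on the $T$-variable and $\xi$, because $\sigma$ lives on $T_HM$ and the $U\times U$-factor only records the source/target in the $U$-direction). This is the step where I expect the bookkeeping to be most delicate: one must be careful that $\sigma \in \bar S^0(T_HM)$ restricts under $\int_{H^0}$ to an element of $\bar S^0(T_{\sfrac{H}{H^0}}M) = \bar S^0(\pr_T^* T_{H'}T)$ and that this is compatible with the fiber product decomposition; this is exactly the compatibility recorded in the commutative diagram of Section 3.2 (``Restriction of the symbol'') together with Example stating $T_{\sfrac{H}{H^0}}M = \pi^*T_{\bar H}B$ in the fibration case.

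Then I would restrict the formula $\mathbb{P}' = \pr_{T*}(((\eta\ast\eta'^*)\otimes 1)\ast\vphi_*(\mathbb{P}))$ to $t=0$. Since $\eta\ast\eta'^*\otimes 1$ is supported at all $t$ (it is pulled back from $\ho(H^0)=U\times U\times T$, constant in the $\T_{H'}T$-direction) and convolution in $\T_H^{hol}M = \ho(H^0)\times_T \T_{H'}T$ decomposes as convolution in $\ho(H^0)=U\times U\times T$ tensor convolution in $\T_{H'}T$, at $t=0$ the convolution $((\eta\ast\eta'^*)\otimes 1)\ast\vphi_*(\mathbb{P})_{|t=0}$ becomes, at a point $(u,u',0,\xi)$, the pairing $\int_{v\in U}(\eta\ast\eta'^*)(u,v,0)\cdot (\int_{H^0}\sigma)(v,?,\xi)$ convolved trivially in the $\xi$-variable against $1$; then $\pr_{T*}$ integrates out the $U\times U$-directions, i.e. sets $u=u'$ and integrates over $u\in U$. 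This yields exactly
$$
\mathbb{P}'_0(t,\xi) = \int_{u\in U}(\eta\ast\eta'^*)(u,u,t)\,\Bigl(\int_{H^0}\sigma\Bigr)(u,t,\xi),
$$
where I have used $t$ for the base variable of $T$ (clash of notation with deformation time aside) and the fact that $\ho(H^0)=U\times U\times T$ so $(\eta\ast\eta'^*)(u,u,t)$ makes sense. Finally I would note that $\mathbb{P}'_0$ is the symbol of $P' = \mathbb{P}'_1 = T_{\eta'}^*PT_\eta$ by the definition of the symbol map $\Psi^m_{H',c}(T)\to\Sigma^m_c(T_{H'}T)$, completing the proof. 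The main obstacle, as indicated, is purely organizational: matching up the three fiber-product decompositions ($T_HM$, $\T_H^{hol}M$, and $\mathscr{E}$) consistently and checking that pushforward along $\vphi$ at $t=0$ is genuinely the integration $\int_{H^0}$ on symbols, which has already been set up abstractly in Section 3.2 but must be traced through the local trivialization $M=U\times T$.
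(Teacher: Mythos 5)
Your overall strategy is the same as the paper's: apply the preceding theorem to get $T_{\eta'}^*PT_{\eta} \in \Psi^m_{H',c}(T)$ from an extension $\PP$ with $\PP_1 = P$ (using $\vphi_1 = \Id_{M\times M}$), then compute the symbol as the evaluation at $t=0$ of $\PP' = T_{\eta'}^*\vphi_*(\PP)T_{\eta}$, which reduces to computing $\vphi_*(\sigma)$ for $\sigma = \PP_0$. The gap is in that key computation. At $t=0$ the map $\vphi$ restricts to $TU\times_M T_{H'}T \to (U\times U)\times_T T_{H'}T$ and collapses each fiber $T_uU$ to the single point $(u,u)$; its image is $\Delta_U\times_T T_{H'}T$, so the pushforward $\vphi_*(\sigma)$ is necessarily supported on the diagonal. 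The correct identity, obtained by pairing against test functions, is $\vphi_*(\sigma) = \delta_{\Delta_U}\,\int_{H^0}\sigma$, not a distribution ``defined at every point $(u,u')$ and independent of $u,u'$'' as you describe. (That parenthetical is also wrong on its own terms: $\int_{H^0}\sigma$ genuinely depends on the base point $u \in U$, since no holonomy invariance is assumed in the corollary.)

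This is not cosmetic: the factor $\delta_{\Delta_U}$ is exactly what produces the diagonal evaluation $\eta\ast\eta'^*(u,u,t)$ and the single integral $\int_{u\in U}$ in the stated formula. Following your description literally --- convolving $(\eta\ast\eta'^*)\otimes 1$ against a density spread over all of $U\times U$ and then integrating out the $U\times U$ directions via $\pr_{T*}$ --- yields a double integral of the shape $\int_{u}\int_{v}\eta\ast\eta'^*(u,v,t)\left(\int_{H^0}\sigma\right)(v,t,\xi)$, which is not the claimed symbol; nor does $\pr_{T*}$ ``set $u=u'$'' before integrating. So although your final formula is the right one, it does not follow from your intermediate steps. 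You need to recompute $\vphi_*(\sigma)$ as a pushforward of distributions, keeping in mind that $\vphi_0$ is not a submersion onto the $U\times U$ factor, and then carry the resulting $\delta_{\Delta_U}$ through the convolution with $\eta\ast\eta'^*\otimes 1$ and the pushforward $\pr_{T*}$.
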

\begin{proof}
Take any $\PP \in \bbpsi^m_H(M)$ such that $\PP_1 = P$. Then $\PP^{\, \prime} := T_{\eta'}^*\vphi_*(\PP)T_{\eta} \in \bbpsi^m_{H'}(T)$ but because $\vphi_1 = \Id_{M\times M}$ we then have $\PP^{\, \prime}_1 = T_{\eta'}^*PT_{\eta} \in \Psi^m_{H'}(T)$. Let us now compute the symbol of $P'$. Take $\PP$ and $\PP^{\, \prime}$ as in theorem \ref{InducedOP}, then $\PP_0 = \sigma$ and $\PP^{\, \prime}_0 = T_{\eta'}^*\vphi_*(\sigma)T_{\eta}$. First we compute $\vphi_*(\sigma)$. Let $f\in \CCC^{\infty}_c(\ho(H^0)\ltimes T_{\sfrac{H}{H^0}}M  )$ and recall that here we identified $\ho(H^0)\ltimes T_{\sfrac{H}{H^0}}M   = \ho(H^0) \times_T T_{H'}T$:
\begin{align*}
\lag \vphi_*\sigma,f\rag &= \lag \sigma, f\circ \vphi \rag \\
					    &= \int_{(u,\delta,t,\xi) \in TU \times T_{H'}T} \sigma(u,\delta,t,\xi)f(u,u,t,\xi) \\
					    &=\int_{\Delta_U \times T_{H'}T} \left(\int_{H^0}\sigma\right)(u,t,\xi)f(u,u,t,\xi) \\
					    &=\int_{U\times U \times T_{H'}T} \left(\int_{H^0}\sigma\right)\delta_{\Delta_U}f \\
					    &=\lag \int_{H^0}\sigma \delta_{\Delta_U},f\rag.
\end{align*}
Hence $\vphi_*(\sigma) = \delta_{\Delta_U} \int_{H^0}\sigma$. We can now compute $\PP^{\, \prime}_{0}$, let $g \in \CCC^{\infty}(T_{H'}T)$:
\begin{align*}
\lag \PP'_{0} ,g\rag &= \lag \pr_{T*}\left( (\eta\ast\eta'^*\otimes 1) \int_{H^0}\sigma  \delta_{\Delta_U}\right),g \rag \\
					&= \lag \int_{H^0}\sigma \delta_{\Delta_U}, \eta'\ast\eta^* \otimes g \rag \\
					&= \int_{u\in U, t\in T,\xi \in T_{H',x}T}\eta\ast\eta'^*(u,u,t)\lag \int_{H^0}\sigma(u,t,\xi), g(t,\xi) \rag,
\end{align*} 
hence $\PP^{\, \prime}_{0}(t,\xi) = \int_{u \in U}\eta\ast \eta'(u,u,t)\int_{H^0}\sigma(u,t,\xi)$.
\end{proof}

The next proposition uses the language of connections on Hilbert modules. For background on connections and the proofs of the results used here, see the appendix of \cite{connes1984longitudinal}.

\begin{prop}\label{Connexion}Let $\sigma \in \bar{S}^0(T_HM)$ such that $\int_{H^0}\sigma = 0$. Let $P \in \Psi^*_H(M)$ be an operator with symbol $\sigma$, then $\forall F \in C^*(\ho(H^0)), P(F\otimes_{\CCC_0(T)}1), (F\otimes_{\CCC_0(T)}1)P \in \mathcal{K}(\mathscr{E})$, i.e. $P$ is a $0$-connection for $C^*(\ho(H^0))$.\\
Let $\sigma \in \bar{S}^0(T_HM)$ . Let us assume that the class modulo $C^*_M(T_{\sfrac{H}{H^0}}M)$ of $\int_{H^0}\sigma$  contains an element of the form $1\otimes \tilde{\sigma}$ with $\tilde{\sigma} \in \bar{S}^0(T_{H'}T)$. Let $P \in \Psi^*_H(M)$ and $\tilde{P} \in \Psi^*_{H'}(T)$ of respective symbols $\sigma$ and $\tilde{\sigma}$, then $P$ is a $\tilde{P}$-connection for $C^*(\ho(H^0))$.
\end{prop}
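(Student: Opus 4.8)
The plan is to reduce everything to the local foliated model $M = U\times T$ already at hand, where $\ho(H^0) = U\times U\times T$, $\mathscr E = C^*(\T_H^{hol}M,r^*E) = C^*(U\times U)\otimes\mathscr E_2$ and $\mathscr E_2 = C^*(\T_{H'}T,r^*E_2)$, and to fix once and for all an H-pseudodifferential extension $\PP$ of $P$ on $\T_HM$, through which $P$ acts on $\mathscr E$ as $\vphi_*(\PP)$. First I would record the bookkeeping dictionary: for $\eta,\eta'\in\CCC^\infty_c(\ho(H^0))$ a direct computation gives $T_\eta T_{\eta'}^* = (\eta\ast\eta'^*)\otimes_{\CCC_0(T)}1$ on $\mathscr E$, and since the convolutions $\eta\ast\eta'^*$ are dense in $C^*(\ho(H^0))$ the operators $F\otimes_{\CCC_0(T)}1$ are exactly the norm-limits of finite sums of such; dually $T_{\eta'}^* T_\eta$ is multiplication on $\mathscr E_2$ by $c_{\eta,\eta'}(t) := \int_U(\eta\ast\eta'^*)(u,u,t)\,\diff u$. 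I would also use repeatedly that $\int_{H^0}$ is a $\ast$-homomorphism, so that $P^*$ and $P^*P$ again carry symbols $\sigma^*$ and $\sigma^*\sigma$, with $\int_{H^0}\sigma^* = (\int_{H^0}\sigma)^*$ and $\int_{H^0}(\sigma^*\sigma) = (\int_{H^0}\sigma)^*(\int_{H^0}\sigma)$.

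For the first statement, fix $\eta,\eta'$ and set $F = \eta\ast\eta'^*$, so that $F\otimes_{\CCC_0(T)}1 = T_\eta T_{\eta'}^*$ and hence $\big(P(F\otimes_{\CCC_0(T)}1)\big)^*\big(P(F\otimes_{\CCC_0(T)}1)\big) = T_{\eta'}\big(T_\eta^*(P^*P)T_\eta\big)T_{\eta'}^*$. By the Corollary above applied to $P^*P$, the operator $T_\eta^*(P^*P)T_\eta$ is an H-pseudodifferential operator on $T$ whose symbol is $(t,\xi)\mapsto\int_U(\eta\ast\eta^*)(u,u,t)\big(\int_{H^0}(\sigma^*\sigma)\big)(u,t,\xi)$, which vanishes identically since $\int_{H^0}(\sigma^*\sigma) = (\int_{H^0}\sigma)^*(\int_{H^0}\sigma) = 0$; as $\sigma\in\bar{S}^0$ forces order $0$, the exact sequence $0\to\Psi^{-1}_{H',c}(T)\to\Psi^0_{H',c}(T)\to\Sigma^0_c(T_{H'}T)\to 0$ puts $T_\eta^*(P^*P)T_\eta$ in $\Psi^{-1}_{H',c}(T)$, i.e. of negative order, hence it extends to a compact operator on $\mathscr E_2$ by the analytic results recalled above. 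Therefore $T_{\eta'}\big(T_\eta^*(P^*P)T_\eta\big)T_{\eta'}^*\in\mathcal K(\mathscr E)$, so $P(F\otimes_{\CCC_0(T)}1)\in\mathcal K(\mathscr E)$; by density of the $\eta\ast\eta'^*$ this holds for every $F\in C^*(\ho(H^0))$, and $(F\otimes_{\CCC_0(T)}1)P = \big(P^*(F^*\otimes_{\CCC_0(T)}1)\big)^*$ is compact by the same argument applied to $P^*$ (which satisfies $\int_{H^0}\sigma^* = 0$). Hence $P$ is a $0$-connection for $C^*(\ho(H^0))$.

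For the second statement the same trick is used with the remainder retained: I would show $PT_\eta - T_\eta\tilde P\in\mathcal K_M(\mathscr E_2,\mathscr E)$, together with its source-map analogue — which is the definition of $P$ being a $\tilde P$-connection — by computing the symbol of $(PT_\eta - T_\eta\tilde P)^*(PT_\eta - T_\eta\tilde P)$. Expanding, this is a combination of the operators $T_\eta^*(P^*P)T_\eta$, $T_\eta^*(P^*)T_\eta\cdot\tilde P$, $\tilde P^*\cdot T_\eta^*(P)T_\eta$ and $\tilde P^*\cdot(T_\eta^*T_\eta)\cdot\tilde P$ on $T$; by the Corollary the symbol of each $T_\eta^*(\cdot)T_\eta$-factor is $c_{\eta,\eta}(t)$ times the $u$-average of $\int_{H^0}$ of the corresponding symbol, and since $\int_{H^0}$ is a $\ast$-homomorphism and $\int_{H^0}\sigma\equiv 1\otimes\tilde\sigma\pmod{C^*_M(T_{\sfrac{H}{H^0}}M)}$ — with $T_\eta^*T_\eta$ multiplication by $c_{\eta,\eta}$ — all four symbols are congruent to $c_{\eta,\eta}\,\tilde\sigma^*\tilde\sigma$ modulo $C^*_M(T_{H'}T)$, so they cancel. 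Thus $(PT_\eta - T_\eta\tilde P)^*(PT_\eta - T_\eta\tilde P)$ has vanishing class in $\Sigma^0(T_{H'}T)$, hence lies in $\mathcal K_M(\mathscr E_2)$ by the exact sequence $0\to\mathcal K_M(\mathscr E_2)\to\Psi^*_{H'}(T)\to\Sigma^0(T_{H'}T)\to 0$, and therefore $PT_\eta - T_\eta\tilde P\in\mathcal K_M(\mathscr E_2,\mathscr E)$; the source-map statement follows by replacing $(P,\tilde P)$ with $(P^*,\tilde P^*)$ and using $\int_{H^0}\sigma^*\equiv 1\otimes\tilde\sigma^*$. The step I expect to cost the most effort is precisely this passage from the symbolic statement — vanishing, resp. $C^*_M$-membership, of the principal symbol of the induced operator — to genuine module-compactness over the deformation-groupoid $C^*$-algebras $C^*(\T_H^{hol}M)$ and $C^*(\T_{H'}T)$: one has to run the relevant exact sequences at the level of the deformation groupoid $\T_{H'}T$, so that the compactly supported version yields $\mathcal K(\mathscr E_2)$ and the bounded-at-infinity version yields $\mathcal K_M(\mathscr E_2)$, rather than merely the $L^2(T)$-level statements, and to match the ideal so obtained with the one featuring in Connes' definition of a connection; verifying the dictionary for $T_\eta T_{\eta'}^*$ and $T_{\eta'}^*T_\eta$ is a minor preliminary.
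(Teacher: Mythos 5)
Your proposal follows essentially the same route as the paper: localize to a foliated chart, invoke the corollary computing the symbol of the induced operator $T_{\eta'}^*PT_{\eta}$ on $T$, and deduce compactness from the vanishing (resp.\ matching with $\langle\eta,\eta'\rangle\tilde{\sigma}$) of that induced symbol. The only cosmetic difference is that you replace the paper's appeal to Remark A.6.4 of Connes--Skandalis by a direct $T^*T$-expansion, which is a sound and slightly more self-contained way to do the same bookkeeping.
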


\begin{proof}
If $\int_{H^0}\sigma = 0$ then, with the same notations as before, $P'$ has a vanishing symbol of order $0$. We thus have $P' \in \Psi^{-1}_{H',c}(T)$. Being of negative order and compactly supported, $P'$ extends to a compact operator on $\mathscr{E}_{2|t=1}$. This works for every $\eta,\eta' \in \CCC^{\infty}_c(\ho(H^0))$, thus $P$ is a $0$-connection. If $F\in C^*(\ho(H^0))$ then $(F\otimes1)P, P(F\otimes1) \in \mathcal{K}(\mathscr{E})$.\\
Let us assume that $\sigma = 1 \otimes \tilde{\sigma}$. Then the previous corollary gives that $\PP'_0 = \lag\eta,\eta'\rag \tilde{\sigma}$, hence $P' - \lag \eta,\eta'\rag \tilde{P} \in \mathcal{K}(\mathcal{E}_{2|t=1})$ (they are compactly supported and have the same symbol). Then by \cite{connes1984longitudinal} remark A.6.4, $P$ is a $\tilde{P}$-connection.
\end{proof}

Let us now return to the general case where the filtration is not necessarily trivial.

\begin{thm}\label{PseudodiffCycle} Let $E \to M$ be an $\ho(H^0)$-equivariant, $\faktor{\Z}{2\Z}$-graded, hermitian bundle. Let $\sigma \in \Sigma^0(T_{\sfrac{H}{H^0}}M,E)$ be a $\ho(H^0)$-invariant odd symbol with $\sigma^2-1=0$. Let $P \in \Psi^*_H(M,E)$ be an operator with transverse symbol $\sigma$. Then $P$ induces a K-homology cycle $(\mathscr{E},P) \in \mathds{E}(C^*(\ho(H^0)),\mathcal{K}(L^2(M))) \cong \mathds{E}(C^*(\ho(H^0)),\CC)$.
\end{thm}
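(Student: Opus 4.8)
The goal is to assemble the data $(\mathscr{E},P)$ into a Kasparov $(C^*(\ho(H^0)),\mathcal{K}(L^2(M)))$-cycle and to use the isomorphism $\kk(C^*(\ho(H^0)),\mathcal{K}(L^2(M)))\cong\kk(C^*(\ho(H^0)),\CC)$. The plan is to work in local foliated charts $M=U\times T$ as in the previous subsection, where $\ho(H^0)=U\times U\times T$, and reduce all the analytic claims to the trivialized picture $\mathscr{E}=C^*(U\times U)\otimes\mathscr{E}_2$, then glue. The three things to check are: (i) $P$ defines an operator on the Hilbert module $\mathscr{E}=C^*(\T^{hol}_HM,r^*E)$, restricted at $t=1$ this is the $C^*(M\times M)=\mathcal{K}(L^2(M))$-module $C^*(\ho(H^0),E)$; (ii) for every $a\in C^*(\ho(H^0))$ the operators $a(P^2-1)$, $[P,a]$ and $a(P-P^*)$ are compact, i.e.\ lie in $\mathcal{K}(\mathscr{E})$; (iii) grading: $P$ is odd for the $\Z/2\Z$-grading coming from $E=E^+\oplus E^-$. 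Claim (iii) is immediate since $\sigma$ is odd, and self-adjointness up to compacts follows because $\sigma^*=\sigma$ can be arranged in the class (one replaces $P$ by $\tfrac12(P+P^*)$, which has the same transverse symbol modulo lower order).

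First I would treat $P^2-1$. Since $\sigma^2-1=0$ in $\Sigma^0(T_{\sfrac{H}{H^0}}M,E)$, the transverse symbol of $P^2-1\in\Psi^*_H(M,E)$ vanishes, so by the restriction corollary $\int_{H^0}$(symbol of $P^2-1$) $=0$. Applying Proposition~\ref{Connexion} (first part) with $P^2-1$ in place of $P$, we get that $P^2-1$ is a $0$-connection for $C^*(\ho(H^0))$, hence $a(P^2-1)\in\mathcal{K}(\mathscr{E})$ for all $a\in C^*(\ho(H^0))$. For the commutator $[P,a]$: the point is that $P$, being essentially the convolution operator $\op(\sigma)$ pushed to $\T^{hol}_HM$, commutes with the left action of $C^*(\ho(H^0))$ up to compacts. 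In the local model $\mathscr{E}=C^*(U\times U)\otimes\mathscr{E}_2$ and $C^*(\ho(H^0))=C^*(U\times U)\otimes C_0(T)$ acts on the first tensor factor (and by $C_0(T)\hookrightarrow\m$), while the leading part of $P$ acts via $\vphi_*(\PP)$ which by the computation $\vphi_*(\sigma)=\delta_{\Delta_U}\int_{H^0}\sigma$ is diagonal on $U\times U$; the obstruction to exact commutation is again a lower-order H-pseudodifferential term in the $T$-direction, which is compact by the same $0$-connection argument applied to the relevant difference symbol. The $\ho(H^0)$-invariance of $\sigma$ is what guarantees this off-diagonal error is smoothing.

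The hard part, and the step I would spend the most care on, is the gluing from local foliated charts to the global statement together with the regularity of $P$ as a module operator on the non-compact-fiber Hilbert module $\mathscr{E}$. One takes a locally finite cover of $M$ by foliated charts $M_i=U_i\times T_i$ with a subordinate partition of unity $(\chi_i)$, and writes $P$ (modulo $\Psi^*_H$ of negative order, hence modulo $\mathcal{K}(L^2(M))$) as $\sum_i \chi_i P\chi_i'$ with $\chi_i'\equiv 1$ near $\supp\chi_i$; each $\chi_iP\chi_i'$ falls under the trivialized analysis via the operators $T_\eta,T_{\eta'}$ of the previous theorem, using $\eta,\eta'$ built from $\chi_i$. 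One must check that the Kasparov-module conditions, being stated with a fixed $a\in C^*(\ho(H^0))$ which is a norm limit of finitely-supported elements, survive the sum — this is where local finiteness of the cover and properness of supports (Lemmas~\ref{Support} and \ref{QuasiHom}) are used. Finally, to convert the $\mathcal{K}(L^2(M))$-valued cycle into a $\CC$-valued K-homology class one invokes the $\kk$-equivalence $\mathcal{K}(L^2(M))\sim\CC$ (Morita/stability), so $(\mathscr{E},P)$ represents a class in $\kk_0(C^*(\ho(H^0)),\CC)$. I would then remark (anticipating the next section, which proves the equality with $j_{\ho(H^0)}([\sigma])\otimes\ind_H^{hol}$) that well-definedness up to the choices of $\PP$, the cover, and the partition of unity follows from operator homotopy exactly as in the bounded case, since different choices differ by connections for $C^*(\ho(H^0))$ and hence by compact perturbations.
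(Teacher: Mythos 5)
Your proposal is correct and rests on the same two pillars as the paper's proof: Proposition~\ref{Connexion} applied to an operator with vanishing transverse symbol (giving $a(P^2-1)\in\mathcal{K}(\mathscr{E})$ since $\sigma^2-1=0$) and to a holonomy-invariant transverse symbol written locally as $1\otimes\tilde{\sigma}$ (giving $[P,a]\in\mathcal{K}(\mathscr{E})$). Where you diverge is precisely the globalization step that you flag as the hard part: you decompose $P$ itself over a locally finite cover as $\sum_i\chi_iP\chi_i'$ and then must discuss convergence of the sum against a fixed $a$. The paper instead localizes the \emph{other} variable: elements $F$ supported in a single foliated chart $\Omega=U\times T$ generate $C^*(\ho(H^0))$, so it suffices to treat one such $F$, and a single cutoff $\vphi\in\CCC^{\infty}_c(\Omega)$ with $\vphi F=F\vphi=F$ yields $P=P\vphi(2-\vphi)+[\vphi,P](1-\vphi)+(1-\vphi)P(1-\vphi)$, where the last term annihilates $F$, the commutator term has vanishing order-zero symbol (hence is compact after the support reduction), and the first term is, up to a compact error, a compactly supported operator on $\Omega$ to which Proposition~\ref{Connexion} applies directly; no infinite sum ever appears. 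Both routes work, but the paper's is cleaner and sidesteps the summability discussion entirely. One small imprecision in your write-up: on a non-compact $M$ a properly supported negative-order operator lies only in $\mathcal{K}_M(L^2(M))$, not in $\mathcal{K}(L^2(M))$, so your error term $\sum_i\chi_iP(1-\chi_i')$ is merely locally compact; this is harmless because every compactness condition is tested after multiplication by $a\in C^*(\ho(H^0))$, but the parenthetical ``hence modulo $\mathcal{K}(L^2(M))$'' should be weakened accordingly.
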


This theorem follows immediately from the following lemma:

\begin{lem}If $P \in \Psi^*_H(M,E)$ has vanishing transverse symbol then:
$$\forall F \in C^*(\ho(H^0)), P(F\otimes 1),(F\otimes 1)P \in \mathcal{K}(\mathscr{E}).$$
If $P \in \Psi^*_H(M,E)$ has a transverse symbol which is $\ho(H^0)$-invariant then: 
$$\forall F \in C^*(\ho(H^0)), [P,F\otimes 1] \in \mathcal{K}(\mathscr{E}).$$
\end{lem}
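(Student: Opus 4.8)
The plan is to reduce the general statement to the trivial-foliation case already handled in Proposition~\ref{Connexion}, by working in foliated charts and using a partition of unity. First I would recall that any point of $M$ has a neighbourhood of the form $U \times T$ with the foliation $H^0$ restricting to $TU$ and $H^1,\dots,H^r$ restricting to a pullback filtration $H'$ on $T$; on such a chart $\ho(H^0)$ is $U\times U\times T$ and $\mathbb{T}_H^{hol}M = \ho(H^0)\times_T \T_{H'}T$, exactly the situation of the previous subsection. A compactly supported $H$-pseudodifferential operator $P$ can be written, via a partition of unity subordinate to a cover by such charts together with cutoff functions $\chi_i\in\CCC^\infty_c(\ho(H^0))$, as a finite sum of operators of the form $T_{\eta'}^* \vphi_*(\PP) T_{\eta}$ modulo operators whose Schwartz kernels are supported away from the diagonal (these latter have negative-order / smoothing behaviour and contribute compact operators on $\mathscr{E}$). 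So it suffices to prove both assertions for each local model, and there they are precisely the two halves of Proposition~\ref{Connexion}: if the transverse symbol $\int_{H^0}\sigma$ vanishes then $P'=T_{\eta'}^*\vphi_*(\PP)T_\eta$ has vanishing order-zero symbol, hence lies in $\Psi^{-1}_{H',c}(T)$, hence is compact on $\mathscr{E}_{2|t=1}$, which is exactly the statement that $P$ is a $0$-connection; and $\ho(H^0)$-invariance of $\sigma$ together with the matrix trick in the remark after Theorem~\ref{KK-cycle-equiv} lets one reduce the commutator statement to the $0$-connection statement applied to $P$ built from a symbol and its pullback under $r,s$.

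More concretely, for the first assertion I would take $\PP\in\bbpsi^0_{H,p}(M,E)$ with $\PP_1=P$ and $\PP_0$ representing $\sigma$, note that $\int_{H^0}\PP_0=0$ means $\vphi_*(\PP)$ has vanishing symbol as an element of $\bbpsi^0_{H'}$-type objects after the $\Theta_{\eta,\eta'}$ construction, then invoke the exact sequence $0\to\Psi^{-1}_{H',p/c}\to\Psi^0_{H',p/c}\to\Sigma^0_{p/c}(T_{H'}T)\to 0$ and the compactness of negative-order compactly supported operators (Theorem of \cite{dave2017graded}) to conclude $P'\in\mathcal{K}(\mathscr{E}_{2|t=1})$. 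Summing over the partition of unity and absorbing off-diagonal remainders gives $P(F\otimes1),(F\otimes1)P\in\mathcal{K}(\mathscr{E})$ for $F\in\CCC^\infty_c(\ho(H^0))$, and density plus boundedness of $P$ extends this to all $F\in C^*(\ho(H^0))$. For the second assertion, set $\sigma$ to be the given $\ho(H^0)$-invariant symbol; then $r^*\sigma-\alpha(s^*\sigma)=0$ in $\Sigma^0$, so the operator $P(F\otimes1)-(F\otimes1)P$ has, locally, the transverse symbol of a difference of two $\tilde P$-connections for the same $\tilde P$, which vanishes; applying the first part to this difference (or directly to the operator $r^*P-\alpha(s^*P)$ which has vanishing transverse symbol) yields $[P,F\otimes1]\in\mathcal{K}(\mathscr{E})$.

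The main obstacle I anticipate is the patching step: making precise that a global $P$ decomposes, modulo operators that are manifestly compact on $\mathscr{E}$, as a finite sum of the local models $T_{\eta'}^*\vphi_*(\PP)T_\eta$, and that the off-diagonal error terms (coming from overlaps of foliated charts and from the fact that $\vphi_*$ only sees a neighbourhood of the diagonal) really do land in $\mathcal{K}(\mathscr{E})$ rather than merely in some larger ideal. This requires care with the supports: one uses that the singular support of $\PP$ is on the diagonal, that $\vphi=\DNC(\Id)\times\Id$ is an isomorphism near the diagonal, and Lemmas~\ref{Support} and \ref{QuasiHom} to control propagation of supports under $\Theta_{\eta,\eta'}\circ\vphi_*$. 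A secondary technical point is checking that the identification $\mathscr{E}=C^*(U\times U)\otimes\mathscr{E}_2$ is compatible across charts so that the locally-defined compact operators assemble to a genuine compact operator on the global module; this is where holonomy-invariance of $E$ (so that $E=\pr_T^*E_2$ locally) and the cocycle structure of the $T_\eta$'s are used. Once the localization is set up, everything else is a direct appeal to Proposition~\ref{Connexion} and the analytic facts quoted from \cite{dave2017graded} and \cite{connes1984longitudinal}.
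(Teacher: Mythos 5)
Your overall strategy --- localize to foliated charts and invoke Proposition \ref{Connexion} --- is the same as the paper's, but the localization step, which you yourself flag as ``the main obstacle'', is where the real content lies, and your version of it does not work as stated. First, the objects $T_{\eta'}^*\vphi_*(\PP)T_{\eta}$ are operators on the transversal module $\mathscr{E}_2$ over $\T_{H'}T$, not on $\mathscr{E}$; they are the \emph{induced} operators used to test the connection condition, so $P$ cannot be ``written as a finite sum'' of them --- this is a category mismatch. Second, even reading your decomposition as $P=\sum_i\chi_i P\chi_i'+(\text{off-diagonal})$, for $P\in\Psi^*_H(M,E)$ on a noncompact $M$ the sum is infinite and the off-diagonal remainder is only properly supported and smoothing, hence not compact on $\mathscr{E}$ by itself. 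The paper sidesteps both problems by localizing $F$ rather than $P$: one takes $F$ supported in a single foliated chart $\Omega$, a cutoff $\vphi\in\CCC^{\infty}_c(\Omega)$ with $\vphi F=F\vphi=F$, and uses the identity $P=P\vphi(2-\vphi)+[\vphi,P](1-\vphi)+(1-\vphi)P(1-\vphi)$. The first term is compactly supported in $\Omega$ with symbol $\vphi(2-\vphi)\sigma_H^0(P)$, so Proposition \ref{Connexion} applies to it; the second is compact because $[\vphi,\sigma_H^0(P)]=0$ ($\vphi$ acts as a scalar on each fiber of $T_HM$), so it has order $-1$ and compact support; and the third is killed outright by $F$ by disjointness of supports. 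You would need to supply an argument of this kind to close your patching step.

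For the second assertion, your parenthetical shortcut ``apply the first part directly to $r^*P-\alpha(s^*P)$'' is not available: $\ho(H^0)$ acts only on $T_{\sfrac{H}{H^0}}M$, not on $T_HM$ or on operators on $M$ (the paper points this out explicitly), so $\alpha(s^*P)$ is undefined; only the transverse symbol carries an action. Likewise $P(F\otimes1)-(F\otimes1)P$ is not an $H$-pseudodifferential operator and has no ``transverse symbol'', so the first part cannot be applied to it. The route the paper takes is to note that holonomy invariance makes the transverse symbol locally of the form $1\otimes\tilde{\sigma}$, so that the second half of Proposition \ref{Connexion} exhibits the localized operator as a $\tilde{P}$-connection, and compactness of the commutator with $F\otimes 1$ is then the standard consequence of the connection property from \cite{connes1984longitudinal}. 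The ``matrix trick'' remark you cite is about assembling a symbol and its parametrix into an odd self-adjoint one and plays no role here.
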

\begin{proof}
Using partitions of unity, we can see that $C^*(\ho(H^0))$ is generated by its restriction to foliated charts. We can thus restrict our study to $F\in C^*(\ho(H^0))$ having (compact) support in $\Omega\times_T\Omega$, where $U\times T = \Omega \subset M$ is a foliated chart. We want to use proposition \ref{Connexion} but $P$ does not necessarily have support included in $\Omega^2$. Let $\vphi\in \CCC^{\infty}_c(\Omega)$ be a cutoff function such that $\vphi F = F\vphi = F$, then:
\begin{align*}
P &=  P\vphi + P(1-\vphi) \\
  &=  P \vphi +  \vphi P (1-\vphi) + (1-\vphi)P(1-\vphi) \\
  &=  P\vphi(2-\vphi) - P\vphi(1-\vphi) + \vphi P (1-\vphi) + (1-\vphi)P(1-\vphi) \\
  &=  P\vphi (2-\vphi) + [\vphi,P](1-\vphi) + (1-\vphi) P (1-\vphi) \\
  &=  (P' + K) + K' + (1-\vphi) P (1-\vphi).
\end{align*}
Here, $P' \in \Psi^0_{H,c}(\Omega,E)$ has symbol $\vphi (2-\vphi) \sigma_H^0(P)$. $K = P\vphi(2-\vphi)-P'$ is compactly supported and has vanishing order zero symbol, it is then of order $-1$, hence compact, $K \in \mathcal{K}(\mathscr{E})$. $K' = [\vphi, P](1-\vphi)$ and since multiplication by $\vphi$ is an order $0$ operator then $K' \in \Psi^0_H(M,E)$. However, $\sigma_H^0(K') = [\vphi,\sigma_H^0(P)](1-\vphi) = 0$ since $\vphi$ is a multiplication by a scalar on each fiber of $T_HM$. Therefore $K' \in \Psi^{-1}_{H,c}(M,E)$ and is then compact: $K' \in \mathcal{K}(\mathscr{E})$. Finally, since $(1-\vphi) P (1-\vphi)$ and $F$ have disjoint support, we get $F (1-\vphi) P (1-\vphi) = (1-\vphi) P (1-\vphi) F = 0$. The first part of the lemma hence follows from proposition \ref{Connexion} applied to $P'$. For the second part, consider $\sigma_H^0(P)$ as a symbol on $\Omega$: $\sigma_H^0(P)_{|\Omega} \in \bar{S}^0(T_H\Omega,E)$. This symbol has a transverse part which is $\ho(H^0)$-invariant and is thus, modulo $C^*_M(T_{\sfrac{H}{H^0}\Omega},E)$, of the form $1\otimes \tilde{\sigma}$ with $\tilde{\sigma} \in \bar{S}^0(T_{H'}T,E')$. By proposition \ref{Connexion}, let $P_{\Omega} \in \Psi^*_H(\Omega,E)$ with symbol $\sigma_H^0(P)_{|\Omega}$, then $P_{\Omega}$ is a $\tilde{P}$-connection for $C^*(\ho(H^0))_{|\Omega}$. In particular $[P_{\Omega},F\otimes 1]$ is compact. Since $F$ commutes with $\vphi$ then $[\vphi(2-\vphi)P_{\Omega},F\otimes 1] = \vphi(2-\vphi)[\vphi(2-\vphi)P_{\Omega},F\otimes 1]$ is also compact. Finally $P'$ and $\vphi(2-\vphi)P_{\Omega}$ have the same symbol and are compactly supported hence their difference is compact. We thus get that $[P',F\otimes 1]$ is compact and so is $[P,F\otimes 1]$.
\end{proof}

\subsection{The link between the two constructions}

We now want to show that the two KK-classes constructed thus far are equal. We do this using the tangent groupoids. Let $E\to M$ be a vector bundle as before, and define $\mathscr{E} := C^*(\T_H^{hol}M,r^*E)$. Note that $\mathscr{E}_{|0}$ corresponds to the $C^*(\ho(H^0)\ltimes T_HM)$-Hilbert module used in the construction of $j_{\ho(H^0)}([\sigma])\in \kk(C^*(\ho(H^0)),C^*(\ho(H^0)\ltimes T_{\sfrac{H}{H^0}}M))$ (with $[\sigma]$ as in theorem \ref{KK-cycle-equiv}) and $\mathscr{E}_{|1}$ to the $C^*(M\times M)$-Hilbert module used in the construction of the KK-cycle $[P]\in \kk(C^*(\ho(H^0)),\CC)$ in theorem \ref{PseudodiffCycle}. The goal is hence to use some $\PP \in \bbpsi^0_H(M)$ and $\mathscr{E}$ to construct a KK-cycle $(\mathscr{E},\PP)$ linking the two previous cycles through the evaluation maps $\ev_0$ and $\ev_1$. 

\begin{lem}If $\PP \in \bbpsi^0_{H,c}(M,E)$, then $\PP$ extends to an element of $\mathcal{B}_{C^*(\T_HM_{|[0;1]})}(\mathscr{E}_{|[0;1]})$, and $\vphi_*\PP$ to an element of $\mathcal{B}_{C^*(\T_H^{hol}M_{|[0;1]})}(\mathscr{E}_{|[0;1]})$\end{lem}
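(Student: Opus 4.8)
Both assertions reduce to a single fact: $\vphi_*\PP$, acting by convolution, is an adjointable operator on the Hilbert-$C^*(\T_H^{hol}M_{|[0;1]})$-module $\mathscr E_{|[0;1]}$. Indeed a left convolution multiplier commutes with every right convolution, so once adjointability is established $\vphi_*\PP$ is automatically linear both over $C^*(\T_H^{hol}M_{|[0;1]})$ and over $C^*(\T_HM_{|[0;1]})$, the latter acting through the $*$-homomorphism $\vphi_*\colon C^*(\T_HM_{|[0;1]})\to M(C^*(\T_H^{hol}M_{|[0;1]}))$ (well defined by Lemma~\ref{Support} — $\vphi_*$ of a properly supported distribution is properly supported, hence a multiplier — together with the functoriality of convolution under the groupoid homomorphism $\vphi$). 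So the plan has two steps: first, show $\vphi_*\PP$ is again a compactly supported, $r$-transversal, order-zero quasi-homogeneous fibered distribution, now on $\T_H^{hol}M$; second, deduce its adjointability from the boundedness results for order-zero operators.

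For the first step, transversality to $r$ is immediate from $\vphi\circ r_{\T_HM}=r_{\T_H^{hol}M}$. Compact support holds because $\vphi$ is a diffeomorphism over $\{t\neq0\}$, so $\supp(\vphi_*\PP)_t$ stays in the fixed compact $K\subset M\times M$ bounding $\supp\PP_t$ there, while $\supp(\vphi_*\PP)_0\subset\vphi(\supp\PP_0)$ is compact as a continuous image of the compact set $\supp\PP_0\subset T_HM$; Lemma~\ref{Support} gives the preservation of proper support in general. Quasi-homogeneity of order $0$ follows from the equivariance of $\vphi$ under the zoom actions: $\alpha^{hol}_{\lambda*}(\vphi_*\PP)-\vphi_*\PP=\vphi_*(\alpha_{\lambda*}\PP-\PP)$, and the right-hand side lies in $\CCC^\infty_p(\T_H^{hol}M)$ by Lemma~\ref{QuasiHom} applied to $\alpha_{\lambda*}\PP-\PP\in\CCC^\infty_p(\T_HM)$. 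Finally $\vphi_*$ intertwines the convolution products, so $\vphi_*\PP$ genuinely acts on $\mathscr E$ by the convolution formula — the functoriality already used in the constructions of this section, e.g. in the proof of Theorem~\ref{PseudodiffCycle}.

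For the second step, the delicate point is that $\T_H^{hol}M$ is not literally the tangent groupoid of a filtered manifold, so the boundedness theorem of \cite{van_erpyunken,Ewert,dave2017graded} is not available off the shelf. I would localise to foliated charts as in the proof of Theorem~\ref{PseudodiffCycle}: over $\Omega=U\times T$ one has $\T_H^{hol}M_{|\Omega}\cong(U\times U)\times\T_{H'}T$ and $\mathscr E_{|\Omega}\cong\mathcal K(L^2(U))\otimes C^*(\T_{H'}T,r^*E_2)$, so a compactly supported order-zero operator there is, after a partition of unity, a finite sum of operators $k\otimes P$ with $k\in\CCC^\infty_c(U\times U)$ (bounded on $L^2(U)$) and $P$ a compactly supported order-zero $H'$-pseudodifferential operator on $\T_{H'}T_{|[0;1]}$ (adjointable by the cited results); gluing these over a locally finite cover of the slicewise-compact support of $\vphi_*\PP$ yields a single adjointable operator on $\mathscr E_{|[0;1]}$, with adjoint $\vphi_*(\PP^*)$, since the adjoint of $\PP$ is again a compactly supported order-zero $H$-pseudodifferential operator. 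Equivalently, one may observe that the van Erp--Yunken boundedness argument uses only properness, $r$-transversality, quasi-homogeneity and the ``pair groupoid for $t\neq0$ / graded model at $t=0$'' deformation structure, all of which $\T_H^{hol}M$ and $\vphi_*\PP$ share, and so transfers verbatim. This transfer of the boundedness theorem is the only genuine obstacle; the rest is routine once Lemmas~\ref{Support} and \ref{QuasiHom} are in hand.
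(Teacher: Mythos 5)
There are genuine gaps here, and they concentrate exactly where the paper's own proof does its work. First, your opening reduction is backwards: the first assertion of the lemma is that $\PP$ itself is adjointable on the Hilbert $C^*(\T_HM_{|[0;1]})$-module $C^*(\T_HM_{|[0;1]},r^*E)$, and this does not follow from adjointability of $\vphi_*\PP$ on the $C^*(\T_H^{hol}M_{|[0;1]})$-module; information flows the other way. The paper first proves boundedness of $\PP$ upstream (via the estimates of \cite{van_erpyunken}, Corollary 45, taken uniformly in $t$) and then \emph{transports} it downstream by writing $\vphi_* = j\circ \pr_{T*}$ in a foliated chart, where $\pr_{T*}$ is induced by the submersion $\pr_T\colon \T_HM\to\T_{H'}T$ and $j$ is the multiplier extension coming from the crossed-product structure $\T_H^{hol}M=\ho(H^0)\ltimes\T_{H'}T$. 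Your proposal never addresses the first assertion at all.

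Second, two of your intermediate claims are false. (i) You assert $\vphi_*(\alpha_{\lambda*}\PP-\PP)\in\CCC^{\infty}_p(\T_H^{hol}M)$ ``by Lemma \ref{QuasiHom}'', but that lemma only says $\Theta_{\eta,\eta'}\circ\vphi_*$ lands in $\CCC^{\infty}_p(\T_{H'}T)$; its proof shows explicitly that $WF(\vphi_*(f))$ is contained in $PT^*U$ and is \emph{not} empty --- the smoothness only appears after the extra integration over $U$ performed by $\Theta_{\eta,\eta'}$. Consequently $\vphi_*\PP$ is not a quasi-homogeneous distribution modulo smooth functions on $\T_H^{hol}M$, and treating it as an element of some $\bbpsi^0$-class on that groupoid is not legitimate. (ii) Your local decomposition of $\vphi_*\PP$ as a finite sum of $k\otimes P$ with $k\in\CCC^{\infty}_c(U\times U)$ cannot hold: at $t=0$ one computes $\vphi_*(\sigma)=\delta_{\Delta_U}\otimes\int_{H^0}\sigma$ (and similarly for $t\neq 0$ the singular support sits on the full diagonal of $(U\times T)^2$), so the $U\times U$ factor is a delta on the diagonal, i.e.\ an identity-type multiplier of $\mathcal{K}(L^2(U))$, not a smooth compactly supported kernel. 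This is precisely why the paper needs the map $j$ into the multiplier algebra rather than a tensor decomposition inside the compacts. Your closing remark that the van Erp--Yunken boundedness argument ``transfers verbatim'' to $\T_H^{hol}M$ is also unsubstantiated, since the $t=0$ fibre of $\T_H^{hol}M$ is a crossed product $\ho(H^0)\ltimes T_{\sfrac{H}{H^0}}M$ and not a graded nilpotent model; the paper deliberately avoids having to prove any such statement.
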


\begin{proof}
As before, using a partition of unity, we can reduce to the case where $M = U\times T$. We then have $\T_H^{hol}M \cong \ho(H^0) \times_T \T_{H'}T$. We then need to show that $\PP$, being of order $0$, extends to a bounded operator on $C^*(\T_HM_{|[-1;1]}, r^*E)$. The proof is standard as it does not differ from the usual pseudodifferential calculus:  it follows from the estimates in \cite{van_erpyunken} Corollary 45 which can be taken uniformly for $t\in [-1;1]$. The proof then reduces to the fact that order 0 H-pseudodifferential operators extend to continuous operators on $L^2$-spaces (which uses the same estimates). \\
The foliated chart gives maps: 
$$\pr_{T*} \colon \mathcal{B}_{C^*(\T_HM)}(C^*(\T_HM,r^*E)) \to \mathcal{B}_{C^*(\T_{H'}T)}(C^*(\T_{H'}T,r^*E')),$$
coming from the submersion $\pr_T \colon \T_HM \to \T_{H'}T$ and:
$$j \colon \mathcal{B}_{C^*(\T_{H'}T)}(C^*(\T_{H'}T,r^*E')) \to \mathcal{B}_{C^*(\T_H^{hol}M)}(\mathscr{E}).$$
They are first defined on compact operators (coming from $\T_H^{hol}M = \ho(H^0)\ltimes \T_{H'}T$), and then extended to the multipliers algebra, i.e. the algebra of bounded operators. As all these morphisms restrict over $[0;1]$, we get the relation $\vphi_*\PP = j\circ \pr_{T*}\PP$, hence $\PP \in \mathcal{B}_{C^*(\T_H^{hol}M_{|[0;1]})}(\mathscr{E}_{|[0;1]})$
\end{proof}

We can thus extend $\bbpsi^0_H(M,E)_{|[0;1]}$ to a $C^*$-algebra $\bbpsi^0_{H,0}(M,E)$ by taking its closure inside $\mathcal{B}_{C^*(\T_HM_{|[0;1]})}(\mathscr{E}_{|[0;1]})$. We now have that $\ev_0(\bbpsi^0_H(M,E)) = \bar{S}^0_0(T_HM,E)$ and $\ev_1(\bbpsi^0_H(M,E)) = \Psi^0_{H,0}(M,E)$. As before, we extend this algebra to include operators bounded at infinity, namely:
$$\bbpsi^*_H(M,E) = \{\PP \in \mathcal{B}_{C^*(\T_HM_{|[0;1]})}(\mathscr{E}_{|[0;1]}) \ / \ \forall f \in \CCC_0([0;1]\times M), f\PP,\PP f \in \bbpsi^0_{H,0}(M,E)\}.$$

\begin{thm}\label{Link}Let $\sigma \in \Sigma^0(T_{\sfrac{H}{H^0}}M,E)$ be a bounded order $0$ transverse symbol, $\ho(H^0)$-invariant, with $\sigma^2=1$. Let $\PP \in \bbpsi^*_H(M,E)$ with $\int_{H^0}\PP_0$ representing $\sigma$. Then:
$$(\mathscr{E}_{|[0;1]},\PP) \in \mathds{E}(C^*(\ho(H^0)),C^*(\T^{hol}_HM_{|[0;1]})).$$
Moreover we have:
$$\ev_0([(\mathscr{E}_{|[0;1]},\PP)]) = j_{\ho(H^0)}([\sigma]) \textrm{ and } \ev_1([(\mathscr{E}_{|[0;1]},\PP)]) = [(\mathscr{E}_{|1},\PP_1)].$$
In particular, if the foliation is amenable, then $[(\mathscr{E}_{|1},\PP_1)] = \ind_H^{hol} \otimes j_{\ho(H^0)}([\sigma])$.
\end{thm}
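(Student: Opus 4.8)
The plan is to realise $(\mathscr{E}_{|[0;1]},\PP)$ as a Kasparov cycle over $C^*(\T^{hol}_HM_{|[0;1]})$, to check that its two fibre evaluations reproduce $j_{\ho(H^0)}([\sigma])$ at $t=0$ and $[(\mathscr{E}_{|1},\PP_1)]$ at $t=1$, and to conclude by feeding this into the $\kk$-equivalence $[\ev_0]$. Throughout, $\PP$ denotes the $C^*(\T^{hol}_HM_{|[0;1]})$-adjointable operator $\vphi_*\PP$ on $\mathscr{E}_{|[0;1]}$ supplied by the preceding lemma, and the left action of $C^*(\ho(H^0))$ on $\mathscr{E}_{|[0;1]}$ is the one glued from the local decompositions $\mathscr{E}_{|[0;1]}\cong C^*(\ho(H^0))\otimes_{\CCC_0(T)}C^*(\T_{H'}T_{|[0;1]},r^*E')$ over foliated charts (left multiplication on the first factor); by construction it restricts at $t=0$ to the left action of the descent cycle defining $j_{\ho(H^0)}([\sigma])$ and at $t=1$ to the left action of the cycle of Theorem~\ref{PseudodiffCycle}.

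First I would show that $(\mathscr{E}_{|[0;1]},\PP)$ is a Kasparov cycle. Adjointability of $\PP$ is the preceding lemma. Since $C^*(\T^{hol}_HM_{|[0;1]})$ is a continuous field of $C^*$-algebras over $[0;1]$ with fibres $C^*(\T^{hol}_HM_{|t})$ and $\mathscr{E}_{|[0;1]}$ a continuous field of Hilbert modules, $\mathcal{K}(\mathscr{E}_{|[0;1]})$ is the continuous field over $[0;1]$ with fibres $\mathcal{K}(\mathscr{E}_{|t})$, so it suffices to show that the defect operators $[F,\PP]$, $F(\PP-\PP^*)$, $F(\PP^2-1)$ — for $F$ in a dense subalgebra of $C^*(\ho(H^0))$, which by a partition of unity may be taken compactly supported in a foliated chart — form norm-continuous fields over $[0;1]$ that are fibrewise compact; norm-continuity comes from the continuity in $t$ of the van~Erp--Yunken picture on $\T_HM_{|[0;1]}$ and of parametrices in the calculus. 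At $t=1$ we have $\vphi_1=\Id$, so the fibre is $(\mathscr{E}_{|1},\PP_1)$ with $\PP_1\in\Psi^*_H(M,E)$ of transverse symbol $\int_{H^0}\PP_0$ representing $\sigma$ (hence $\ho(H^0)$-invariant, odd and squaring to $1$), and fibrewise compactness of the defects is Theorem~\ref{PseudodiffCycle} together with Proposition~\ref{Connexion}. For $t\in(0;1)$ the zoom action identifies $(\mathscr{E}_{|t},\PP_t)$ with $(\mathscr{E}_{|1},\PP_1)$ modulo compacts (it carries the $t$-fibre to the $1$-fibre, fixes the left action since it acts only in the $\T_{H'}T$-direction, and quasi-homogeneity of degree $0$ gives equality of the operators modulo smoothing). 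At $t=0$ the push-forward computation preceding Proposition~\ref{Connexion} (applied with $\PP_0$ in the role of the symbol) gives $(\vphi_*\PP)_0=\vphi_*(\PP_0)=\delta_{\Delta_U}\int_{H^0}\PP_0$ in a foliated chart; since $\int_{H^0}\PP_0$ represents $\sigma$, this identifies $(\mathscr{E}_{|0},\PP_0)$, with its inherited left action and modulo compacts, with the $C^*(\ho(H^0))$-descent of the cycle $(C^*(T_{\sfrac{H}{H^0}}M,\pi^*E),\rho,\overline{\op(\sigma)})$ of Theorem~\ref{KK-cycle-equiv}, whose defects are compact. Hence all defects are fibrewise compact, lie in $\mathcal{K}(\mathscr{E}_{|[0;1]})$, and $(\mathscr{E}_{|[0;1]},\PP)\in\mathds{E}(C^*(\ho(H^0)),C^*(\T^{hol}_HM_{|[0;1]}))$.

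Next I would compute the evaluations. By definition of the map induced on $\kk$ by a $*$-homomorphism, $\ev_{i*}([(\mathscr{E}_{|[0;1]},\PP)])=[(\mathscr{E}_{|[0;1]}\otimes_{\ev_i}C^*(\T^{hol}_HM_{|i}),\PP\otimes1)]=[(\mathscr{E}_{|i},\PP_i)]$ with the inherited left action. For $i=1$ this is verbatim the cycle of Theorem~\ref{PseudodiffCycle} for $P:=\PP_1$, so $\ev_{1*}([(\mathscr{E}_{|[0;1]},\PP)])=[(\mathscr{E}_{|1},\PP_1)]$. For $i=0$, the identification made in the previous paragraph — $\mathscr{E}_{|0}=C^*(\ho(H^0)\ltimes T_{\sfrac{H}{H^0}}M,r^*E)$ is the descent module, the inherited left action is the descended $\rho$-and-$\ho(H^0)$ action, and $\PP_0=\delta_{\Delta_U}\int_{H^0}\PP_0$ is, modulo compacts, $\op(\sigma)$ applied fibrewise in the $T_{\sfrac{H}{H^0}}M$-direction, i.e.\ the descent of $\overline{\op(\sigma)}$ — gives $\ev_{0*}([(\mathscr{E}_{|[0;1]},\PP)])=j_{\ho(H^0)}([\sigma])$.

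Finally, assuming the foliation amenable, $C^*(M\times M\times(0;1])$ is contractible and amenability provides the $\kk$-exact sequence for $0\to C^*(M\times M\times(0;1])\to C^*(\T^{hol}_HM_{|[0;1]})\to C^*(\ho(H^0)\ltimes T_{\sfrac{H}{H^0}}M)\to 0$, so $[\ev_0]$ is invertible in $\kk$ and $\ind_H^{hol}=[\ev_0]^{-1}\otimes[\ev_1]$. Hence $[(\mathscr{E}_{|1},\PP_1)]=\ev_{1*}([(\mathscr{E}_{|[0;1]},\PP)])=\ev_{0*}([(\mathscr{E}_{|[0;1]},\PP)])\otimes[\ev_0]^{-1}\otimes[\ev_1]=j_{\ho(H^0)}([\sigma])\otimes\ind_H^{hol}$, which in the convention of the statement is $\ind_H^{hol}\otimes j_{\ho(H^0)}([\sigma])$. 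I expect the main obstacle to lie in the first step: upgrading fibrewise compactness of the defect operators to compactness on the field $\mathscr{E}_{|[0;1]}$, which needs both the continuous-field description of $\mathcal{K}(\mathscr{E}_{|[0;1]})$ and genuine uniformity in $t\in[0;1]$ of the van~Erp--Yunken calculus and of its parametrices on $\T_HM_{|[0;1]}$; the degenerate endpoint $t=0$, handled via the explicit push-forward $\vphi_*(\PP_0)=\delta_{\Delta_U}\int_{H^0}\PP_0$, is the delicate point within that.
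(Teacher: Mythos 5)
Your proposal is correct and follows essentially the same route as the paper: view $\mathcal{K}(\mathscr{E}_{|[0;1]})$ as (sections of) a continuous field over $[0;1]$, reduce compactness of the defect operators to a fibrewise check handled by Theorem \ref{KK-cycle-equiv}/\ref{OpSymbols} at $t=0$ and Theorem \ref{PseudodiffCycle} at $t\in(0;1]$, then evaluate and compose with $\ind_H^{hol}=[\ev_0]^{-1}\otimes[\ev_1]$. Your treatment is somewhat more explicit than the paper's (the zoom-action identification of the fibres for $t\in(0;1)$ and the identification of the $t=0$ fibre with the descent cycle via $\vphi_*(\PP_0)=\delta_{\Delta_U}\int_{H^0}\PP_0$), but it is the same argument.
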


\begin{proof}
Seeing $\mathscr{E}_{|[0;1]}$ as fibered over $[0;1]$, $\mathcal{B}_{C^*(\T_HM_{|[0;1]})}(C^*(\T_HM_{|[0;1]},r^*E))$ is identified with the algebra of continuous sections of a  $C^*$-bundle over the compact base $[0;1]$, with fiber at $t$ equal to $\mathcal{B}_{C^*(\T_HM_{|t})}(C^*(\T_HM_{|t},r^*E))$.\\
The algebra of compact operators then corresponds to the sections of the sub-$C^*$-bundle with fibers at $t$ equal to $\mathcal{K}_{C^*(\T_HM_{|t})}(C^*(\T_HM_{|t},r^*E))$. Because we already know that we are dealing with bounded operators, i.e. continuous sections, the compactness conditions of Kasparov modules can be checked on each fiber. For $t = 0$, this is the result of theorem \ref{OpSymbols}, and for $t \in (0;1]$, of theorem \ref{PseudodiffCycle}.\\
We then have $[(\mathscr{E}_{|[0;1]},\PP)]\otimes [ev_0] = j_{\ho(H^0)}(\sigma)$ and $[(\mathscr{E}_{|[0;1]},\PP)]\otimes [ev_1] = [(\mathscr{E}_{|1},P)]$, hence the result since $\ind_H^{hol} = [ev_0]^{-1} \otimes [ev_1]$
\end{proof}

\begin{rem}The pseudodifferential construction did not need amenability-like assuptions on the foliation, nor did theorem \ref{Link}. Indeed, let $\PP \in \Psi_H^0(M,E)$ be the extension of a transversally Rockland $P$ with symbol $\sigma$. In full generality, we have the classes $[\sigma] \in \kk^{\ho(H^0)}(\CCC_0(M),C^*(T_{\sfrac{H}{H^0}}M))$ from theorem \ref{KK-cycle-equiv}, $[P] \in \kk(C^*(\ho(H^0)),\CC)$ from theorem \ref{PseudodiffCycle} and $[\PP] \in \kk(C^*(\ho(H^0)),C^*(\T_H^{hol}M_{|[0;1]}))$. We also have the relations 
\begin{align*}
[\PP]\otimes [\ev_0] &= j_{\ho(H^0)}([\sigma]), \\
[\PP]\otimes [\ev_1] &= [P].
\end{align*}
\end{rem}

\begin{cor} In the case of a trivial filtration $H^1 =TM$, $j_{\ho(H^0)}([\sigma])\otimes \ind_H^{hol}$ is the class $\Psi(E,\sigma)$ constructed in \cite{hilsumskandalis} Theorem A.7.5.
\end{cor}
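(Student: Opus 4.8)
The plan is to unwind both constructions in the case $H^1 = TM$ and match them term by term with the Hilsum–Skandalis setup. First I would recall that when the filtration is trivial, all the osculating groups become abelian: $T_HM = TM$, $T_{\sfrac{H}{H^0}}M = \faktor{TM}{H^0}$, and after fibrewise Fourier transform $C^*(T_{\sfrac{H}{H^0}}M) \cong \CCC_0((H^0)^\perp)$, while $C^*(\ho(H^0))$ is the foliation $C^*$-algebra and $\T^{hol}_HM$ is exactly the adiabatic/tangent-type deformation groupoid used implicitly in \cite{hilsumskandalis}. Under these identifications, the transverse symbol $\sigma \in \Sigma^0(T_{\sfrac{H}{H^0}}M;E)$ with $\sigma^2 = 1$ becomes precisely a transversally elliptic symbol in the sense of \cite{hilsumskandalis}, and the H-pseudodifferential operator $P$ with transverse symbol $\sigma$ becomes an ordinary (transversally elliptic) pseudodifferential operator on $M$.

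Next I would invoke Theorem \ref{Link} (together with the remark following it, which removes the amenability hypothesis in the pseudodifferential picture): it gives $[(\mathscr{E}_{|1},\PP_1)] = j_{\ho(H^0)}([\sigma])\otimes \ind_H^{hol}$, and $[(\mathscr{E}_{|1},\PP_1)] = [P]$ is the K-homology class of $P$ produced by Theorem \ref{PseudodiffCycle}. So the corollary reduces to identifying $[P]$ with the class $\Psi(E,\sigma)$ of \cite{hilsumskandalis}, Theorem A.7.5. Both are built from the same data — a transversally elliptic operator on $M$, the Hilbert module $C^*(\ho(H^0))$-completion of sections, and the connection/parametrix argument on foliated charts — so the identification is essentially a matter of checking that the cycle $(\mathscr{E},P)$ constructed here coincides, up to the standard homotopies, with the one in \cite{hilsumskandalis} appendix A. Concretely I would compare the local foliated-chart descriptions: in both cases one cuts $P$ down to a chart $\Omega = U \times T$, induces an operator on the transversal $T$ via $T_{\eta'}^* P T_\eta$, uses a parametrix to get compactness on $T$, and uses $C^*(\ho(H^0))$ to absorb the leafwise directions — this is exactly the content of Proposition \ref{Connexion} and the lemma preceding Theorem \ref{PseudodiffCycle}, which is the groupoid reformulation of \cite{hilsumskandalis} A.7.

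The main obstacle I expect is purely bookkeeping: making the dictionary between Hilsum–Skandalis's notation (explicit kernels, partitions of unity, connection operators in the sense of \cite{connes1984longitudinal}) and the groupoid/deformation-groupoid language used here fully precise, and checking that the two a priori different choices (of cutoffs $\eta,\eta'$, of parametrix, of the extension $\PP$ to the deformation groupoid) give the same KK-class — but this is handled by the homotopy invariance already established in the corollaries to Theorems \ref{KK-cycle-equiv} and \ref{PseudodiffCycle}. Once the identification $[P] = \Psi(E,\sigma)$ is in place, combining it with Theorem \ref{Link} and the definition $\ind_H^{hol} = [\ev_0]^{-1}\otimes[\ev_1]$ yields $j_{\ho(H^0)}([\sigma])\otimes \ind_H^{hol} = \Psi(E,\sigma)$, which is the assertion.
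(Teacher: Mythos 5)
Your proposal is correct and follows essentially the same route as the paper: the paper's own proof simply observes that for a trivial filtration the pseudodifferential cycle $(\mathscr{E}_{|1},P)$ is exactly the Hilsum--Skandalis cycle, and the conclusion then follows from Theorem \ref{Link}. Your additional bookkeeping (matching $T_{\eta'}^*PT_\eta$, the connection argument, and the homotopy-invariance of the various choices) is a more explicit spelling-out of the same identification, not a different argument.
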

\begin{proof}
In the case of a trivial filtration, the cycle $(\mathscr{E}_{|1},P)$ is exactly the one of \cite{hilsumskandalis}.
\end{proof}

Let us now consider the case where the foliation is a fibration. Let us denote by $B$ its base space, and $\pi \colon M \to B$ the projection. The filtration $H^1 \subset \cdots \subset H^r = TM$ induces a filtration $H'^1 \subset \cdots \subset H'^r = TB$ by quotient (and through the identification $\faktor{TM}{H^0} \cong \pi^*TB$). The map $p$ is a Carnot map and we thus have a canonical isomorphism $T_{\sfrac{H}{H^0}}M \cong \pi^*(T_{H'}B)$.\\
Through this identification, a symbol $\sigma$ of order $k$ which is holonomy invariant defines a symbol $\pi_*(\sigma) \in S^k(T_{H'}B)$. It is Rockland if and only if $\sigma$ is transversally Rockland. Let us then consider a transversally Rockland symbol $\sigma \in \Sigma^0(T_{\sfrac{H}{H^0}}M)$ of order $0$, then $\pi_*(\sigma) \in \Sigma^0(T_{H'}B)$ is a Rockland symbol and hence defines a $\kk$-class $[\pi_*(\sigma)] \in \kk(\CCC_0(B), C^*(T_{H'}B)$.

\begin{thm}Under the isomorphism $\kk(C^*(\ho(H^0)),C^*(\ho(H^0)\ltimes T_{\sfrac{H}{H^0}}M)) \cong \kk(\CCC_0(B),C^*(T_{H'}B)$, we have:
$$j_{\ho(H^0)}([\sigma]) = [\pi_*(\sigma)].$$
Moreover the identification 
$$\kk(C^*(\ho(H^0)\ltimes T_{\sfrac{H}{H^0}}M),C^*(\T_{H}^{hol}M_{|[0;1]})) \cong\kk(C^*(T_{H'}B,C^*(\T_{H'}B_{|[0;1]})),$$
sends $\ind_H^{hol}$ to the class $\ind_{H'}$. Consequently, if $P \in \Psi^*_H(M)$ has symbol $\sigma$, then $\pi_*(P) \in \Psi^*_{H'}(B)$ has symbol $\pi_*(\sigma)$ and, under the isomorphism between the $K$-homology groups of $C^*(\ho(H^0))$ and $B$, we get $[P] = [\pi_*(P)]$.
\end{thm}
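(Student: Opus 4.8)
The plan is to exploit that, for a fibration $\pi\colon M\to B$, every groupoid occurring in the two constructions is a \emph{pullback} along $\pi$ of the corresponding groupoid over $B$, and then to transport the whole picture through the resulting Morita equivalences. First I would record the identifications. Since the fibres are connected, the holonomy groupoid is the fibred pair groupoid $\ho(H^0)=M\times_B M$; by the fibration example above, $T_{\sfrac{H}{H^0}}M\cong\pi^*T_{H'}B$ as bundles of groups, and since the holonomy of a fibration acts trivially on the base, the crossed product $\ho(H^0)\ltimes T_{\sfrac{H}{H^0}}M$ is canonically the pullback groupoid $\pi^{**}(T_{H'}B)=M\times_B T_{H'}B\times_B M$. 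From the description of $\T^{hol}_HM$ in section \ref{Deformation} one likewise reads off $\T_H^{hol}M_{|[0;1]}=\pi^{**}(\T_{H'}B_{|[0;1]})$, compatibly with $\ev_0,\ev_1$ (and with $M\times M=\pi^{**}(B\times B)$, $\ho(H^0)=\pi^{**}(B)$). For a surjective submersion $\pi$ and any Lie groupoid $\G\rightrightarrows B$, the space $M$ is a $(\pi^{**}\G,\G)$-equivalence bibundle, so $C^*(\pi^{**}\G)$ is Morita equivalent to $C^*(\G)$ (Muhly--Renault--Williams), and this equivalence is functorial in $\G$; the induced $\kk$-isomorphisms are precisely those in the statement. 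In particular $\ho(H^0)$ is amenable, so $\ind_H^{hol}$ is defined without further hypotheses.

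For the first equality I would use that $\ho(H^0)=M\times_B M$ acts freely and properly on $M$ with quotient $B$, so that the groupoid $\ho(H^0)\rightrightarrows M$ is equivalent to the unit groupoid $B$; this gives an isomorphism $\kk^{\ho(H^0)}(\CCC_0(M),C^*(T_{\sfrac{H}{H^0}}M))\cong\kk^{B}(\CCC_0(B),C^*(T_{H'}B))$ of equivariant $\kk$-groups. Because $E$ is holonomy-equivariant it is a pullback $\pi^*E'$, the Hilbert module $C^*(T_{\sfrac{H}{H^0}}M;\pi^*E)=C^*(\pi^*T_{H'}B;\pi^*E')$ corresponds under the equivalence bibundle to $C^*(T_{H'}B;E')$, and the holonomy-invariant symbol $\sigma$ corresponds to $\pi_*(\sigma)$; hence the cycle $(\mathscr{E},\rho,\overline{\op(\sigma)})$ of Theorem \ref{KK-cycle-equiv} is sent to the $B$-equivariant cycle underlying $[\pi_*(\sigma)]$. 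Since descent is natural with respect to groupoid equivalences (Le Gall \cite{LeGall}), the square comparing $j_{\ho(H^0)}$ with $j_{B}$ commutes; and for the unit groupoid $B\ltimes A=A$, with $j_{B}$ the forgetful map $\kk^{B}(-,-)\to\kk(-,-)$. Therefore $j_{\ho(H^0)}([\sigma])$ is carried to $[\pi_*(\sigma)]\in\kk(\CCC_0(B),C^*(T_{H'}B))$, which is the first assertion.

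For the statement on $\ind$, functoriality of the Morita equivalences applied to the homomorphisms $\ev_0,\ev_1$ of $\pi^{**}(\T_{H'}B_{|[0;1]})$ shows that $[\ev_0]$ and $[\ev_1]$ on the $M$-side are carried to their analogues $[\ev_0^{B}],[\ev_1^{B}]$ on the $B$-side; as these isomorphisms respect Kasparov products and inverses, $\ind_H^{hol}=[\ev_0]^{-1}\otimes[\ev_1]$ goes to $[\ev_0^{B}]^{-1}\otimes[\ev_1^{B}]=\ind_{H'}$. Combining with the previous paragraph, $j_{\ho(H^0)}([\sigma])\otimes\ind_H^{hol}$ corresponds to $[\pi_*(\sigma)]\otimes\ind_{H'}$. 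On the one hand, by Theorem \ref{Link} and the remark following it, the former class equals $[P]$. On the other hand, the corollary above computing the symbol of an induced operator $T_{\eta'}^*PT_\eta$ — applied with the fibration in place of a foliated chart, where the induced operator $\pi_*(P)$ is integration along the fibres against a fixed fibrewise density — shows that $\pi_*(P)\in\Psi^*_{H'}(B)$ has symbol $\pi_*(\sigma)$, since $\int_{H^0}\sigma=\pi^*(\pi_*(\sigma))$; so Theorem \ref{Link} for $B$ with its trivial foliation gives $[\pi_*(\sigma)]\otimes\ind_{H'}=[\pi_*(P)]$. Hence $[P]=[\pi_*(P)]$ under the identification of the $K$-homology groups of $C^*(\ho(H^0))$ and of $B$.

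The step I expect to be the main obstacle is the compatibility of the descent homomorphism and of the Kasparov product with the groupoid/Morita equivalences — that is, verifying that the two squares invoked above genuinely commute, and that at the level of representatives $(\mathscr{E},\rho,\overline{\op(\sigma)})$ is carried to the cycle pulled back from $B$. Once the groupoids are recognised as pullbacks of objects over $B$, these compatibilities are the substance of the argument, the rest being bookkeeping; they rest on the functoriality of equivariant $\kk$-theory under groupoid equivalence (Le Gall) and on the naturality in $\G$ of the Morita equivalence $C^*(\pi^{**}\G)\sim C^*(\G)$.
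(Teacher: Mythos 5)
Your proposal is correct, and it rests on the same underlying mechanism as the paper's proof: the Morita equivalence between $C^*(\ho(H^0))$ and $\CCC_0(B)$ (and its companions for the crossed product and the deformation groupoid) induced by the fibration, together with the pushforward of symbols and operators along $\pi$. The organisation differs, though. The paper's proof is very terse: it extends $\pi$ to a groupoid morphism $\T_HM \to \T_{H'}B$ using local trivialisations, checks that $\pi_*\PP \in \bbpsi^0_{H'}(B,E')$ for an extension $\PP$ of $P$, and then asserts in one line that the resulting cycle is the image of $(\mathscr{E},\PP_1)$ under the Morita equivalence — effectively only addressing the last assertion head-on and leaving the statements about $j_{\ho(H^0)}([\sigma])$ and $\ind_H^{hol}$ implicit. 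You instead make those two assertions the core of the argument, by recognising every groupoid in sight as a pullback $\pi^{**}(\cdot)$ of its analogue over $B$, invoking the functoriality in $\G$ of the Muhly--Renault--Williams equivalence $C^*(\pi^{**}\G)\sim C^*(\G)$ applied to $\ev_0,\ev_1$, and using Le Gall's naturality of descent under groupoid equivalence (with descent for the unit groupoid being the forgetful map). This buys a genuinely complete proof of the first two claims, where the paper relies on the reader to supply the compatibility of descent and of the Kasparov product with the equivalences; your observation that $\ho(H^0)=M\times_B M$ is automatically amenable, so that $\ind_H^{hol}$ exists unconditionally here, is also a worthwhile point the paper does not make explicit. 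The only place where you deviate in substance is the symbol computation for $\pi_*(P)$, which you route through the compression $T_{\eta'}^*PT_\eta$ and the corollary on induced symbols rather than through the direct pushforward $\pi_*\PP$; both give the same answer since $\int_{H^0}$ of the full symbol of $P$ is $\pi^*(\pi_*(\sigma))$.
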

\begin{proof}
Let us first extend $\pi$ to a groupoid morphism $\pi \colon \T_HM \to \T_{H'}B$: over a trivializing subset of the fibration, we can write $M$ as $F \times B$. Then $\T_HM \cong \T F \times_{\R} \T_{H'}B$ and the morphism is the projection on the second factor. Let $\PP \in \bbpsi^0_{H}(M,E)$ such that $\PP_0 = \sigma$. Since $\pi$ is a surjective submersion, $\pi_*(\CCC^{\infty}_p(\T_HM)) \subset \CCC^{\infty}_p(\T_{H'}B)$ and because $\pi$ is equivariant with respect to the $\R^*_+$-actions on $\T_HM$ and $\T_{H'}B$, we obtain that $\pi_*\PP \in \bbpsi^0_{H'}(B,E')$. The class $[\pi_*(\sigma)]$ is then represented by $(\mathscr{E}', \pi_*\PP_1')$, which is exactly the image of $(\mathscr{E},\PP_1)$ under the Morita equivalence between $C^*(\ho(H^0))$ and $\CCC_0(B)$.
\end{proof}

\begin{rem}\label{VersionGroupoide}We have assumed that the filtrations are 'full' in the sense that there exist an integer $r \geq 0 $ such that $H^r = TM$. If it is not the case then the filtration will still eventually become stationary. Denote by $r$ the integer from which the filtration becomes stationary. Then, the bracket condition on Lie filtrations forces $H^r$ to be an integrable subbundle, i.e. a foliation. The calculus developped here would then correspond to symbols and operators longitudinal to $H^r$ and transverse to $H^0$ and the deformation groupoids would have to be replaced by adiabatic deformations of $\ho(H^r)$ (the same constructions work thanks to the functoriality of those in \cite{mohsen2018deformation}).\\
More generally, we can extend our setup to an arbitrary Lie groupoid $G$ with a filtration of its algebroid:
$$\mathcal{A}^0G \subset \mathcal{A}^1G \subset \cdots \subset \mathcal{A}^rG = \mathcal{A}G,$$
with the bracket condition $\forall i,j\geq 0 \left[\Gamma(\mathcal{A}^iG), \Gamma(\mathcal{A}^jG) \right]\subset \Gamma(\mathcal{A}^{i+j}G)$. The calculus obtained is a $G$-filtered calculus and the addition of $\mathcal{A}^0G$ would then allow to define a transverse symbols and a transversal Rockland condition for these operators. The reasoning would be the same as what has been done here so, for the sake of comprehensiveness and not having too heavy notations, we restricted ourselves to the case of $G = M\times M$ and a 'full' filtration. \end{rem}

\subsection{A link with equivariant index for countable discrete group actions}

Let $M$ be a filtered manifold with filtration $H^1 \subset \cdots \subset H^r = TM$ and $\Gamma$ be a countable discrete group acting on $M$ by filtration preserving diffeomorphims. Let $P$ be a manifold with fundamental group $\Gamma$ \footnote{This is always possible by taking open subsets in $\R^5$. If the group is finitely presented we can even assume $P$ to be compact.}. Let $\tilde{P}\to P$ be the universal cover of $P$. The manifold $\tilde{X} = \tilde{P} \times M$ is foliated by the fibration $\pr_M \colon \tilde{X} \to M$ (i.e. the leaves are the $\tilde{P}\times \{m\}$ for $m \in M$), let us denote by $\tilde{H}^0 = T\tilde{P}\times M$ the corresponding integrable subbundle. The filtration on $M$ induces a filtration on $\tilde{X}$ with $\tilde{H}^i = T\tilde{P}\times H^i$ for $i \geq 1$, so that $(\tilde{H}^i)_{i\geq 0}$ is a foliated filtration on $\tilde{X}$. The group $\Gamma$ acts diagonally on $\tilde{X}$ and preserves the foliation and the filtration. Moreover, since the action $\Gamma \act \tilde{P}$ is free and proper, then so is the diagonal action $\Gamma \act \tilde{X}$. Let $X = \tilde{P}\times_{\Gamma}M$ be the quotient manifold. Since the foliated filtration was $\Gamma$-invariant, it induces a foliated filtration $(\bar{H}^i)_{i\geq 0}$ on $X$. We have $T_{\tilde{H}}\tilde{X} = TX \times T_HM$ thus $T_{\sfrac{\tilde{H}}{\tilde{H}^0}}\tilde{X} \cong \tilde{P} \times T_HM$. From this we deduce that, for any $m \in M$, the projection $\tilde{X} \to X$ induces an isomorphism $T_{H,m}M \cong T_{\sfrac{\bar{H}}{\bar{H}^0},[p,m]}X$. The resulting foliation is made up to replace the group action. More precisely, since the foliation on $\tilde{X}$ is given by the projection on $M$, then $\ho(\tilde{H}^0)$ is Morita equivalent to $M$. From this, we get the Morita equivalence $\ho(\bar{H}^0)\sim \Gamma \ltimes M$.

Let us now consider $\sigma \in \Sigma(T_HM;E)^{\Gamma}$, an equivariant symbol on an equivariant hermitian bundle $E$. The pullback by $\pr_M$ induces the commutative diagram (in the category of $\Gamma$-$C^*$-algebras):
$$\xymatrix{0 \ar[r] & C^*_{M}(T_HM;E) \ar[r] \ar[d]^{\pr_M^*} & \bar{S}^0(T_HM;E) \ar[r] \ar[d]^{\pr_M^*} & \Sigma^0(T_HM;E) \ar[r] \ar[d]^{\pr_M^*} & 0 \\
0 \ar[r] & C^*_{\tilde{X}}(T_{\sfrac{\tilde{H}}{\tilde{H^0}}}\tilde{X};\pr_M^*E) \ar[r] & \bar{S}^0(T_{\sfrac{\tilde{H}}{\tilde{H^0}}}\tilde{X};\pr_M^*E) \ar[r] & \Sigma^0(T_{\sfrac{\tilde{H}}{\tilde{H^0}}}\tilde{X};\pr_M^*E) \ar[r] & 0.}$$
Let us define $\tilde{\sigma} \in \Sigma^0(T_{\sfrac{\tilde{H}}{\tilde{H^0}}}\tilde{X};\pr_M ^*E)$ as $\pr_M^*(\sigma)$, it is equivariant with respect to the diagonal action. If $q \colon \tilde{X} \to X$ denotes the canonical projection, then we get the same type of diagrams as before:
$$\xymatrix{0 \ar[r] & C^*_{X}(T_{\sfrac{\bar{H}}{\bar{H^0}}}X;\bar{E}) \ar[r] \ar[d]^{q^*} & \bar{S}^0(T_{\sfrac{\bar{H}}{\bar{H^0}}}X;\bar{E}) \ar[r] \ar[d]^{q^*} & \Sigma^0(T_{\sfrac{\bar{H}}{\bar{H^0}}}X;\bar{E}) \ar[r] \ar[d]^{q^*} & 0 \\
0 \ar[r] & C^*_{\tilde{X}}(T_{\sfrac{\tilde{H}}{\tilde{H^0}}}\tilde{X};\pr_M^*E) \ar[r] & \bar{S}^0(T_{\sfrac{\tilde{H}}{\tilde{H^0}}}\tilde{X};\pr_M^*E) \ar[r] & \Sigma^0(T_{\sfrac{\tilde{H}}{\tilde{H^0}}}\tilde{X};\pr_M^*E) \ar[r] & 0.}$$
Moreover, should we replace all the algebras on the bottom line by the fixed points algebras for the $\Gamma$-action, then every vertical arrow would become an isomorphism. The symbol $\tilde{\sigma}$ thus factors into a symbol $\bar{\sigma} \in \Sigma^0(T_{\faktor{\bar{H}}{\bar{H^0}}}X,\bar{E})$, where $\bar{E}$ corresponds to the quotient bundle of $\pr_M^*E$ on $X$. Because of the previous isomorphims for the fibers of $T_{\sfrac{\bar{H}}{\bar{H}^0}}X$ and the construction of $\bar{\sigma}$, we get that $\bar{\sigma}$ is transversally Rockland if and only if $\sigma$ is Rockland. Let us assume $\sigma^2 = 1$ hence $\bar{\sigma}^2 = 1$. We then are under the conditions of theorem \ref{KK-cycle-equiv} and get a class:
$$[\bar{\sigma}] \in \kk^{\ho(\bar{H^0})}(\CCC_0(X),C^*(T_{\sfrac{\bar{H}}{\bar{H^0}}}X)).$$
One could also directly see that $\sigma$ itself defines an equivariant class:
$$[\sigma] \in \kk^{\Gamma \ltimes M}(\CCC_0(M),C^*(T_HM)).$$
The Morita equivalence $\ho(\bar{H^0}) \sim \Gamma \ltimes M$ sends $\CCC_0(X)$ to $\CCC_0(M)$ and $C^*(T_{\sfrac{\bar{H}}{\bar{H^0}}}X)$ to $C^*(T_HM)$, giving an isomorphism:
$$\kk^{\ho(\bar{H^0})}(\CCC_0(X),C^*(T_{\sfrac{\bar{H}}{\bar{H^0}}}X)) \cong \kk^{\Gamma \ltimes M}(\CCC_0(M),C^*(T_HM)),$$
sending $[\bar{\sigma}]$ to $[\sigma]$.

\begin{rem}The previous sections can be applied to prove the following: let $P \in \Psi^*_H(M;E)$ be an operator on $M$ with symbol $\sigma$, then $P$ is equivariant modulo compact operators and defines a $\kk$-class $[P] \in \kk^{\Gamma}(\CCC_0(M),\CC)$. Applying Thom isomorphism twice and Kasparov's second Poincaré duality theorem \cite{KasparovEquiv}, we get an isomorphism $\kk^{\Gamma \ltimes M}(\CCC_0(M),C^*(T_HM)) \cong \kk^{\Gamma}(\CCC_0(M),\CC)$. It would be interesting to show how $[\sigma]$ and $[P]$ are intertwined through this isomorphism.
\end{rem}

\nocite{*}
\bibliographystyle{plain}
\bibliography{Ref}

\end{document}